\numberwithin{equation}{section}
\theoremstyle{plain}
\newtheorem{theorem}{Теорема}
\newtheorem{maintheorem}{Основная теорема}
\newtheorem{corollary}{Следствие}
\newtheorem{theo}{Теорема} 
\newtheorem{lemma}{Лемма}
\newtheorem{propos}{Предложение}
\theoremstyle{definition}
\newtheorem{proof}{Доказательство}
\newtheorem{remark}{Замечание}
\newtheorem{example}{Пример}
\renewcommand{\leq}{\leqslant} 
\renewcommand{\geq}{\geqslant}
\newcommand{\RR}{\mathbb{R}} 
\newcommand{\CC}{\mathbb{C}} 
\newcommand{\NN}{\mathbb{N}} 
\newcommand{\ZZ}{\mathbb{Z}}
\newcommand{\DD}{\mathbb{D}} 
\DeclareMathOperator{\rad}{rad}
\DeclareMathOperator{\bal}{bal}
\DeclareMathOperator{\Bal}{Bal}
\DeclareMathOperator{\Zero}{Zero} 
\DeclareMathOperator{\supp}{supp} 
\DeclareMathOperator{\type}{type} 
\DeclareMathOperator{\strip}{str} 
\DeclareMathOperator{\ord}{ord}
\DeclareMathOperator{\rh}{rh}
 \DeclareMathOperator{\lh}{lh}
 \DeclareMathOperator{\up}{up}
\DeclareMathOperator{\dd}{d}
\renewcommand{\Re}{\operatorname{Re}}
\renewcommand{\Im}{\operatorname{Im}}
\begin{document} 

\title{Теорема Мальявена\,--\,Рубеля о малых целых функциях экспоненциального типа с заданными нулями: 60 лет спустя
}
\author[B.\,N.~Khabibullin]{Б.\,Н.~Хабибуллин}
\address{Башкирский государственный университет, Институт математики с вычислительным центром УФИЦ РАН}
\email{khabib-bulat@mail.ru} 

\date{19.04.2022}
\udk{517.538 : 517.574}

\maketitle
	
	\begin{fulltext}

\begin{abstract} 

 Пусть ${\mathrm Z}$ и $\mathrm W$ --- распределения точек на комплексной плоскости $\mathbb C$. 
Следующая задача восходит  к исследованиям Ф.~Карлсона, Т.~Карлемана, Л. Шварца, А.~Ф.~Леонтьева, Б.~Я.~Левина, Ж.-П.~Кахана и др.
При  каких  ${\mathrm Z}$ и $\mathrm W$   для целой функции $g\neq 0$ экспоненциального типа, обращающейся в нуль на  $\mathrm W$, найдётся целая  функция  $f\neq 0$ экспоненциального типа, обращающаяся в нуль на  ${\mathrm Z}$, для которой  $|f|\leq |g|$ на мнимой оси? Классическая теорема Мальявена\,--\,Рубела начала 1960-х гг.  полностью решает эту задачу для  <<положительных>>
 ${\mathrm Z}$ и $\mathrm W$, лежащих только на положительной полуоси. Ряд обобщений этого критерия  были установлены нами в конце 1980-х гг.  для <<комплексных>> ${\mathrm Z} \subset \mathbb C$ и ${\mathrm W}\subset \mathbb C$, отделённых углами от мнимой оси,  
с некоторыми  продвижениями  в 2020-е.  В настоящей статье решаются   более жёсткие задачи в  обобщающем субгармоническом обрамлении
для распределений масс на $\mathbb C$. Все предшествующие упоминавшиеся результаты могут быть получены из основных результатов статьи в гораздо более сильной  форме даже для  исходной постановки с распределениями точек ${\mathrm Z}$ и  ${\mathrm W}$ и целыми функциями $f$ и  $g$ экспоненциального типа. Часть результатов статьи тесно связана со знаменитыми теоремами Бёрлинга\,--\,Мальявена о радиусе полноты и мультипликаторе.

Библиография:  60 наименований

\end{abstract}
	
\begin{keywords}
целая функция экспоненциального типа, распределение корней, субгармоническая функция конечного типа,   распределение масс Рисса, выметание
\end{keywords}

\markright{Распределения корней и масс   целых и субгармонических функций}

\footnotetext[0]{Исследование выполнено при финансовой поддержке Российского научного фонда, проект №~22-21-00026.}


\section{Введение}\label{s10}

\subsection{Постановки задач}\label{Sspz}
Голоморфную  на всей {\it комплексной плоскости\/} $\CC$, или\/ {\it целую\/} функцию $f$, для которой в обозначении $\ln^+x:=\max\bigl\{0. \ln x\bigr\}$
её тип 
\begin{equation}\label{efet}
\type\bigl[\ln |f|\bigr]:=\limsup_{z\to \infty}\frac{\ln^+|f(z)|}{|z|} 
\end{equation}
при порядке $1$ конечен,  называем {\it целой функцией\/  $f$   экспоненциального типа,\/} 
\cite{Boas}, \cite{Levin96}, \cite{RC},  \cite{Khsur}, хотя в отечественной литературе   ранее был широко распространён
и термин <<целая функция конечной степени>> \cite{Levin56}. 
Именно целые функции экспоненциального типа   чаще всего используются в разнообразных приложениях теории целых функций, к примеру, как реализующие сопряжённые пространства к функциональным пространствам  на подмножествах в $\CC$.

{\it Общая  задача\/} ---  для заданного распределения  точек ${\mathrm Z}$ на $\CC$  
найти условия существования целой функции экспоненциального типа   $f\not\equiv 0$, обращающейся в нуль на ${\mathrm Z}$ с учётом кратности (пишем $f({\mathrm Z})=0$),  при предписанных ограничениях сверху на модуль $|f|$  вдоль фиксированной прямой. Если учитывается лишь взаимное расположение этой прямой и распределения точек ${\mathrm Z}$, то  выбор прямой не имеет значения. В качестве такой прямой чаще всего рассматривалась {\it вещественная ось\/} $\RR\subset \CC$
или {\it мнимая ось\/} $i\RR\subset \CC$.
Далее в основном  придерживаемся последнего  выбора, что продиктовано исходной для  настоящей  статьи теоремой  Мальявена\,--\,Рубела \cite{MR},  \cite[гл.~22]{RC}, \cite[3.2]{Khsur} об условиях  существовании  целой функции экспоненциального типа  
\begin{equation}\label{fgiR}
f\not\equiv 0, \quad f({\mathrm Z})=0, \quad  \bigl|f(iy)\bigr|\leq  \bigl|g(iy)\bigr|\quad \text{\it при всех $y\in \RR$},
\end{equation}
где $g\not\equiv 0$ ---   целая функция экспоненциального типа  с $g({\mathrm W})=0$ для заданного распределении точек ${\mathrm W}$. 
 Решение этой задачи для распределений точек из ${\mathrm Z}$ и ${\mathrm W}$, полностью лежащих на положительной полуоси 
$\RR^+:=\bigl\{x\in \RR\bigm| x\geq 0\bigr\}$,   давалось в  \cite{MR} и  \cite[гл.~22]{RC}  в терминах соотношений между распределениями точек  ${\mathrm Z}$  и  ${\mathrm W}$.  В настоящей статье  по задаче существования целой функции $f$ экспоненциального типа   из \eqref{fgiR}  будем рассматривать, как правило,  соотношения между распределениями точек   ${\mathrm Z}$ на $\CC$ и ростом вдоль $i\RR$ заданной  целой функции $g$ экспоненциального типа.  Ещё более содержательна    {\it субгармоническая версия\/} задачи 
о существовании целой функции экспоненциального типа 
\begin{equation}\label{fgiRM}
f\not\equiv 0, \quad f({\mathrm Z})=0, \quad   \ln \bigl|f(iy)\bigr|\leq  M(iy)\quad \text{\it при всех $y\in \RR\setminus  E$},
\end{equation}
где $M\not\equiv -\infty$ --  {\it субгармоническая функция конечного типа\/}  
\begin{equation}\label{typeM}
\type[M]:=\limsup_{z\to \infty}\frac{M^+(z)}{|z|}\in \RR^+, \quad M^+(z):=\max\bigl\{0,M(z)\bigr\},  
\end{equation}
 при порядке $1$ на $\CC$,   а $E$ --- достаточно малое  исключительное множество.  Эту задачу также естественно решать   в терминах соотношений между распределением точек из ${\mathrm Z}$ {\it с одной стороны\/}  и 
поведением функции $M$, а также размерами исключительного множества $E$  {\it с другой.\/}
Основные рассмотренные  в настоящей статье  задачи охватывают и более общие и жёсткие субгармонические версии исходных задач \eqref{fgiR}--\eqref{fgiRM}. В них для  заданного распределения масс $\nu$ на $\CC$
с некоторыми ограничениями на него  вблизи мнимой оси и для 
{\it произвольной\/} субгармонической функции $M\not\equiv -\infty$ конечного типа даются одновременно необходимые и достаточные условия, при которых для  числа $b\in \RR^+$  с соответствующей   {\it вертикальной открытой\/} или {\it  замкнутой полосой 
\begin{equation}\label{{strip}c}
 \strip_b:=\Bigl\{z\in \CC\Bigm| |\Re z|< b\Bigr\}, \quad 
\overline \strip_b:=\Bigl\{z\in \CC\Bigm| |\Re z|\leq b\Bigr\}
\end{equation}
ширины\/ $2b$ со средней линией $i\RR$} найдётся  субгармоническая функция $U\not\equiv -\infty$ конечного типа  с {\it распределением масс  Рисса  не меньшим, чем $\nu$,\/} с тождеством  
\begin{equation}\label{UM=}
U(z)\equiv    M(z)
\quad\text{\it при всех  $z\in \strip_b$ или $z\in \overline \strip_b$}. 
\end{equation}
Рассматривается и  специальный   выбор такой функции $U=v+\ln |h|$ конечного типа 
с субгармонической функцией $v$ с распределением масс Рисса, равным  $\nu$,  и  целой функцией  $h\not\equiv 0$ с неравенствами
\begin{equation}\label{UMbbul}
v(z)+\ln \bigl|h(z)\bigr|\leq   M^{\bullet r}(z)
\quad\text{\it при всех  $z\in \overline \strip_b$}.
 \end{equation}
где $M^{\bullet r}(z)$ --- интегральные средние  функции $M$ по кругам с центрами $z$ и очень  быстро убывающими  к нулю   радиусами $r(z)$ при  $z\to \infty$, а также  
\begin{equation}\label{{fgiRMu}l}
v(z)+\ln \bigl|h(z)\bigr|\leq  M(z)\quad\text{\it для всех  $z\in \overline \strip_b\setminus E$},
\end{equation}
где исключительное множество $E\subset \CC$  очень мал\'о. При этом  некоторые  ограничения на распределение масс $\nu$ вблизи и на мнимой оси  неизбежны, если искать решения в простых геометрических терминах 
{\it логарифмических функций интервалов и субмер\/ 
\eqref{df:dDlm+}--\eqref{df:dDlLm} на\/} $\RR^+$ для  $\nu$, восходящих к 
логарифмическим характеристикам  \eqref{ZWR}  распределений точек на $\RR^+$ из \cite{MR}, \cite[гл. 22]{RC}.  

Здесь можно перейти и  к п.~\ref{prr1_2},  а к  п.~\ref{11def}  обращаться по мере необходимости.  

\subsection{Некоторые  обозначения, определения, соглашения}\label{11def} 
 Одноточечные множества $\{a\}$ часто записываем без фигурных скобок, т.е. просто как $a$. Так,   $\NN_0:=0\cup \NN=\{0,1, \dots\}$ для  множества $\mathbb N:=\{1,2, \dots\}$ {\it натуральных чисел,\/}  
$\CC_{\infty}:=\CC\cup \infty$ и $\overline \RR:=-\infty \cup \RR\cup +\infty$ --- {\it расширенные\/}
комплексная плоскость и вещественная ось с $-\infty:=\inf \RR\notin \RR$, $+\infty:=\sup \RR\notin \RR$, неравенствами $-\infty\leq x\leq +\infty$ для любого $x\in \overline \RR$
 и естественной порядковой топологией. По определению 
$\sup \varnothing:=-\infty$ и $\inf \varnothing:=+\infty$ для {\it пустого множества\/} $\varnothing$.
Символом  $0$, кроме нуля,   могут обозначаться {\it нулевые\/} функции,  меры  и пр.

Для $x\in X\subset \overline \RR$ его {\it положительную часть\/} обозначаем как $x^+:=\sup\{0,x \}$, $X^+:=\bigl\{x^+\bigm|  x\in X\bigr\}$. {\it Расширенной числовой функции\/} $f\colon S\to \overline \RR$ сопоставляем её {\it положительную часть\/} $f^+\colon s\underset{s \in S}{\longmapsto} (f(s))^+\in \overline{\RR}^+$ 
и {\it отрицательную часть\/} $f^-:=(-f)^+\colon S\to \overline{\RR}^+ $. Как обычно, пишем $f\not\equiv c$, если функция $f$ принимает хотя бы одно значение, отличное от $c$, в области её определения. 

Для $x_0\in \RR$ и расширенной числовой функции $m\colon [x_0,+\infty) \to \overline \RR$ определим  
\begin{equation}\label{senu0:a} 
\ord[m]:=\limsup_{x\to +\infty} \frac{\ln \bigl(1+m^+(x)\bigr)}{\ln x}\in
\overline \RR^+
 \end{equation} 
{\it --- порядок\/}  (роста) функции $m$ (около $+\infty$), а  для $p\in \RR^+$
\begin{equation} 
\type_p[m]:=\limsup_{x\to +\infty} \frac{m^+(x)}{x^p}\in \overline \RR^+
\label{typevf}
 \end{equation} 
{\it --- тип\/} (роста) функции $m$ {\it при порядке\/} $p$ (около $+\infty$) \cite{Boas}, \cite{Levin56}, \cite{Levin96}, 
\cite{Kiselman}, \cite[2.1]{KhaShm19}, а для произвольной 
 функции $u\colon \CC\to \overline \RR$ с  {\it радиальный функцией роста\/}
\begin{equation} 
{\mathrm M}_u \colon r\underset{r\in \RR^+}{\longmapsto}
\sup\bigl\{u(z)\bigm| |z|=r\bigr\}
\label{u}
\end{equation} 
по определению $\ord[u]:=\ord[{\mathrm M}_u]$ и $\type_p[u]:=\type_p[{\mathrm M}_u]$ 
--- соответственно {\it порядок\/} и 
{\it тип функции $u$ при порядке $p$} \cite{Boas}, \cite{Levin96}, \cite{Kiselman}, \cite[Замечание 2.1]{KhaShm19}.  Функции $u$ конечного типа 
$\type_1[u]\in \RR^+$ при порядке $p=1$ называем просто функциями {\it конечного типа,\/} не упоминая  и не указывая 
порядок $1$ в $\type[u]:=\type_1[u]$,  как это делалось выше  для целых функций экспоненциального типа в \eqref{efet} 
и для субгармонических функций конечного типа в \eqref{typeM}. 

{\it Распределением масс\/} --- это  {\it положительная   мера Радона\/}  \cite{EG},  \cite[Дополнение A]{Rans}, \cite[гл.~3]{HK}, 
а {\it распределение зарядов\/} ---  разность распределений масс \cite{Landkof}.
Для распределений масс   или зарядов  {\it на\/} $\CC$, как правило, не указываем, где  они заданы.   
Для  субгармонической  в области из $\CC$ функции $u\not\equiv -\infty$ действие  на неё   {\it оператора Лапласа\/} ${\bigtriangleup}$  в смысле теории обобщённых функций  определяет её {\it распределение масс Рисса\/}
\begin{equation}\label{Riesz}
\frac{1}{2\pi}{\bigtriangleup}u=:\varDelta_u
\end{equation}
 в этой области  \cite{HK}, \cite{Rans}, \cite{Az}. Для обозначения распределения масс Рисса функции $u$ используем  как первую  форму записи $\frac{1}{2\pi}{\bigtriangleup}u$ из \eqref{Riesz}, так и вторую  $\varDelta_u$.
 Далее $D_z(r):=\bigl\{w \in \CC \bigm| |w-z|<r\bigr\}$ и  $\overline{D}_z(r):=\bigl\{w \in \CC_{\infty} \bigm| |w-z|\leq r\bigr\}$,
а также $\partial \overline{D}_z(r):=\overline{D}_z(r)\setminus {D}_z(r)$
---  соответственно {\it открытый} и   {\it замкнутый круги,\/} а также  {\it окружность  радиуса\/ $r\in \overline \RR^+$
 с центром\/} $z\in \CC$, а   $\DD:=D_0(1)$ и  $\overline \DD:=\overline D_0(1)$, а также  
$\partial \overline \DD:=\partial \overline{D}_0(1)$ --- соответственно {\it открытый\/} и {\it замкнутый 
единичные круги,\/} а также {\it единичная окружность} в  $\CC$. 

Через $\CC_{ \rh}:=\bigl\{z\in \CC \bigm| \Re z>0\bigr\}$ и $\CC_{\overline  \rh}:=\CC_{ \rh}\cup i\RR$, а также
$\CC_{ \lh}:=-\CC_{ \rh}$ и $\CC_{\overline \lh}:=-\CC_{\overline  \rh}$ обозначаем 
соответственно {\it правые открытую} и   {\it замкнутую полуплоскости,\/} а также  
{\it левые открытую} и   {\it замкнутую  полуплоскости\/} в  $\CC$. 

Для распределения зарядов  $\nu$ на $S\subset \CC$ через $\nu^+:=\sup\{\nu,0\}$, $\nu^-:=(-\nu)^+$
и $|\nu|:=\nu^++\nu^-$ обозначаем соответственно {\it верхнюю, нижнюю\/}
и {\it полную вариации\/} распределения зарядов  $\nu$, а $\supp \nu=\supp |\nu|$ --- его {\it носитель,} но  распределение зарядов  $\nu$ {\it сосредоточено на $\nu$-измеримом подмножестве $S_0\subset S$,\/} если полная вариация $|\nu|$ дополнения $S\setminus S_0$ множества $S$ равна нулю. 

{\it Сужение\/} функции $f$ на  $S\subset \CC$ обозначаем как   $f{\lfloor}_S$. Аналогично через $\nu{\lfloor}_S$
обозначается  обозначается и   {\it сужение\/}  положительной меры Бореля или  распределения зарядов $\nu$ на  $\nu$-измеримое  $S\subset \CC$.
При $r\in \overline \RR^+$ для таких   $\nu$  через 
\begin{equation}\label{df:nup} 
\nu_z^{\rad} (r):=\nu \bigl(\,\overline D_z(r)\bigr),\quad \nu^{\rad}(r):=\nu_0^{\rad}(r)
=\nu\bigl(r\overline \DD\bigr)
\end{equation} 
обозначаем  {\it радиальные непрерывные справа считающие  функции  распределения зарядов  $\nu$ с
центрами\/}  соответственно {\it в точке\/ $z\in \CC$} и {\it в нуле.\/}

{\it Верхняя плотность распределения зарядов\/ $\nu$ при порядке\/} $p\in \RR^+$  равна 
\begin{equation}\label{typenu}
\type_p[\nu]:=\type_p\bigl[|\nu|\bigr]
\overset{\eqref{typevf}}{:=} \limsup_{0<r\to +\infty} \frac{|\nu|(r\overline \DD)}{r^p}
\overset{\eqref{df:nup}}{=} \limsup_{0<r\to +\infty} \frac{|\nu|^{\rad}(r)}{r^p}
\in \overline \RR^+,
\end{equation} 
и при $p=1$ упоминание о порядке  опускаем.   В частности, {\it  распределение зарядов\/} $\nu$
{\it конечной верхней плотности,\/} если $\type[\nu]:=\type_1[\nu]<+\infty$. {\it Порядок распределения зарядов\/}
$\nu$ определяется как $\ord[\nu]\overset{\eqref{senu0:a}}{:=}\ord\bigl[|\nu|^{\rad}\bigr]$
через \eqref{df:nup}. 

Всюду далее для  {\it распределений,\/} вообще говоря,  повторяющихся {\it точек на\/} $\CC$  предполагается, 
что в каждом круге $r\DD$ при $r\in \RR^+$ содержится конечное число точек из этого распределения точек,  т.е. рассматриваются только   {\it локально конечные в\/} $\CC$ распределения точек. {\it Распределение зарядов\/} называем {\it целочисленным,\/} если он принимает только целые значения из $\ZZ:=\NN_0\cap (-\NN)$ на ограниченных множествах. 
{\it Распределение точек\/} ${\mathrm Z}$ можно трактовать как целочисленное  распределение масс, для которого масса каждого ограниченного в $\CC$ множества  равна числу попавших в него точек из ${\mathrm Z}$.  Для этого  целочисленного распределения масс сохраняем то же самое обозначение ${\mathrm Z}$. Всюду  далее ${\mathrm Z}$ и ${\mathrm W}$ --- распределения точек на $\CC$.
Таким образом, каждое целочисленное распределение масс однозначно 
определяет локально конечное распределение точек, и наоборот, 
а  равенство ${\mathrm Z}={\mathrm W}$ понимается как равенство соответствующих распределений масс  \cite[0.1.2]{Khsur}. 
Все вводимые в статье  понятия и обозначения для распределений зарядов и масс  переносятся и на распределения точек. 
Так,  ${\mathrm Z}{\lfloor}_{S}$ --- {\it сужение\/} распределения точек ${\mathrm Z}$ на $S\subset \CC$.
Если $\supp {\mathrm Z}\subset S$, то для краткости часто пишем просто ${\mathrm Z}\subset S$.

Для целой функции $f\not\equiv 0$ через $\Zero_f$ обозначаем её {\it распределение корней,\/} представляющее собой распределение точек, в котором каждая точка $z\in \CC$ повторяется столько раз, какова кратность корня функции $f$ в точке  $z$. При этом  $\Zero_f=\frac{1}{2\pi}{\bigtriangleup}\ln |f|$ --- целочисленное распределение масс Рисса субгармонической функции $\ln|f|$
\cite[теорема 3.7.8]{Rans}. 
Целая функция $f\not\equiv 0$ {\it обращается в нуль на\/} ${\mathrm Z}$ и пишем $f({\mathrm Z})=0$, если имеет место неравенство ${\mathrm Z}\leq \Zero_f$ для целочисленных распределений масс  ${\mathrm Z}$ и $\Zero_f$.  

Распределение точек  ${\mathrm Z}$ при его нумерации номерами из  $N\subset \ZZ$ можно рассматривать как некоторую {\it последовательность  ${\mathrm Z}=({\mathrm z}_n)_{n\in N}$ комплексных чисел,} где каждое число  $z_n=z\in \CC$ встречается ровно ${\mathrm Z}(z)$ раз,  т.е. столько же раз, сколько раз точка $z\in \CC$ повторяется  в распределении точек ${\mathrm Z}$.
При этом  
\begin{equation}\label{sumZ}
\sum_{\stackrel{z\in S}{z\in {\mathrm Z}}}f(z):=\int_Sf\dd {\mathrm Z}=\sum_{{\mathrm z}_n\in S} f({\mathrm z}_n)
\end{equation}
при нумерации   ${\mathrm Z}=({\mathrm z}_n)_{n\in N}$ для $S\subset \CC$, когда для расширенной числовой функции $f$ на $\supp {\mathrm Z}$  интеграл и сумма  справа в \eqref{sumZ}
корректно определены. 

Если  при некоторой нумерации  ${\mathrm Z}=({\mathrm z}_n)_{n\in N}$   на $\CC$ можно так  подобрать {\it последовательность попарно различных целых чисел $({\mathrm m}_n)_{n\in N}$} и $c\in \mathbb{R}^+$,  что  
\begin{equation*}
\sum_{k\in N}\Bigl|\frac{1}{{\mathrm z}_n}-\frac{c}{i{\mathrm m}_n}\Bigr|<+\infty, 
\end{equation*}
то   {\it внешняя плотность Редхеффера от ${\mathrm Z}$ вдоль\/ $i\mathbb{R}$} 
не превышает $c$ \cite{Red77}, \cite{Kra89}, \cite{Kha94}, \cite[2.1.1]{Khsur}, \cite{Sal22}, а сама она равна 
точной нижней грани таких  $c\in \RR^+$.

\subsection{Предшествующие результаты для целых функций}\label{prr1_2}
 Задача существования целой функции $f$ экспоненциального типа со свойствами   \eqref{fgiR} 
для любой целой функции $g\not\equiv 0$ экспоненциального типа, обращающейся в нуль  на заданном распределении точек ${\mathrm W}$,
в терминах соотношений между ${\mathrm Z}$ и ${\mathrm W}$  была полностью решена в начале 1960-х гг.  в совместной работе  П.~Мальявена и Л.\,А.~Рубела \cite[теорема 4.1]{MR}, но  лишь для пар  распределений 
{{\it положительных\/}} точек ${\mathrm Z}$ и ${\mathrm W}$ на $\RR^+$. 
 Это решение без каких-либо существенных дополнений  изложено и  в  одной  из основных глав монографии Л.\,А.~Рубела с Дж.\,Э.~Коллиандром \cite[гл.~22]{RC} 1996 г.

\begin{theo}[{Мальявена\,--\,Рубела}] Для   распределений точек ${\mathrm Z}\subset \RR^+\setminus 0$
 и  ${\mathrm W}\subset \RR^+\setminus 0$ конечной верхней плотности   эквивалентны три утверждения:
\begin{enumerate}[{\rm I.}]
\item\label{IMR}  Для любой целой функции   $g\not\equiv 0$ экспоненциального типа с  $g({\mathrm W})=0$  найдётся 
целая функция  $f$ экспоненциального типа  со свойствами  \eqref{fgiR}.
\item\label{IIMR} Существует $C\in \RR$, для которого в обозначениях \eqref{sumZ}
\begin{equation}\label{ZWR}
\sum_{\stackrel{r<z\leq R}{z\in {\mathrm Z}}}\frac{1}{z}\leq
\sum_{\stackrel{r<w\leq R}{w\in {\mathrm W}}}\frac{1}{w}+ C\quad \text{при всех\/ $0<r<R<+\infty$.}
\end{equation}

\item\label{IIIMR} Существуют целая функция   $g$ экспоненциального типа с  распределением корней  
$\Zero_g$ с  сужением  $\Zero_g{\lfloor}_{\CC_{\rh}}={\mathrm W}$ на $\CC_{ \rh}$  
и целая функция   $f$ экспоненциального типа, для которой имеет место  \eqref{fgiR}.
\end{enumerate}
\end{theo}

 В наших работах 1988--89 гг.   были  получены подобные результаты для  
 {\it комплексных\/}   ${\mathrm Z} $ и  ${\mathrm W}$ на $\CC$ в форме, близкой к   \eqref{fgiR}, но с некоторой  малой добавкой к правой   части неравенства из  \eqref{fgiR}.  Так, при $g\equiv 1$ имеет место 
\begin{theorem}[{(\cite[основная теорема]{Kha88})}]\label{theK0}
Для распределения точек ${\mathrm Z}\subset \CC$ при любом $\varepsilon \in \RR^+\setminus 0$ существует целая функция  $f\not\equiv 0$ 
экспоненциального типа с $f({\mathrm Z})=0$ и $\ln |f(iy)|\leq \varepsilon |y|$ при всех $y\in \RR$, если и только если 
${\mathrm Z}$ конечной верхней плотности и для любого $\varepsilon \in \RR^+\setminus 0$ найдётся $C_{\varepsilon}\in \RR$, для которого 
\begin{equation}\label{Kh88e}
\sum_{\stackrel{r<|z|\leq R}{z\in {\mathrm Z}}}\Bigl|\Re \frac{1}{z}\Bigr|
\leq  \varepsilon \ln\frac{R}{r}+C_{\varepsilon} 
\quad\text{при всех\/ $0< r<R<+\infty$.}
\end{equation}
\end{theorem}
Доказательство теоремы \ref{theK0} в \cite[основная теорема]{Kha88} наряду с  теоремой Мальявена\,--\, Рубела существенно  использовало    специальный случай ${\mathrm Z}\subset i\RR$ из статьи И.\,Ф.~Кра\-с\-и\-ч\-к\-о\-ва-Те\-р\-н\-о\-в\-с\-к\-о\-го  1972 г. \cite[Теоремы 8.3, 8.5]{Kra72} по спектральному синтезу. Этот специальный случай ${\mathrm Z}\subset i\RR$ был значительно усилен в нашей статье     \cite[основная теорема]{Kha01l} 2001 г. со взаимно обратным балансом на рост целой функции экспоненциального типа  вдоль $\RR$ и $i\RR$, что в некотором  частном  случае достаточно полно  отражает следующий результат. 
\begin{theorem}[{\cite[теорема 1]{Kha01l}, \cite[теорема 3.3.8]
{Khsur}}]\label{thK1-1}
Пусть  ${\mathrm Z}\subset \CC$ --- распределение точек   конечной верхней плотности 
$\type[{\mathrm Z}]\overset{\eqref{typenu}}{<}d\in \RR^+$ и $\lim\limits_{{\mathrm Z}\ni z\to\infty}{|\Re z|/|z|}=0$. Тогда для  любого числа  $\varepsilon \in (0,1)$
 существует целая функция экспоненциального типа  $f\not\equiv 0$ с $f({\mathrm Z})=0$, для которой одновременно выполнены неравенства 
\begin{equation}\label{xy}
\begin{cases}
\ln\bigl|f(iy)\bigr|&\leq  \frac{\varepsilon}{250 d}|y| \quad\text{при всех $y\in \RR$},
\\
\ln\bigl|f(x)\bigr|&\leq \frac{250 d}{\varepsilon}|x|\quad\text{при всех $x\in \RR$.}
\end{cases}
\end{equation}
\end{theorem}
Для  случая  $\ln |g(iy)|+ \varepsilon |y|$ по-преж\-н\-е\-му со сколь угодно малым числом  $\varepsilon >0$ и  целой функцией  $g\neq 0$ экспоненциального типа  
в 1989 г. установлена 

\begin{theorem}[{\rm (\cite[основная теорема]{KhaD88}, \cite[основная теорема]{Kha89}, \cite[теорема 3.2.1]{Khsur})}]\label{theKh}
Для  любых  распределений точек ${\mathrm Z}\subset \CC$
 и  ${\mathrm W}\subset \CC_{\rh}$ конечной верхней плотности   эквивалентны следующие три утверждения:

\begin{enumerate}[{\rm I.}]
\item\label{IKh}  Для любой целой функции   $g\not\equiv 0$ экспоненциального типа, обращающейся в нуль на  ${\mathrm W}$,  и  любого 
$\varepsilon \in \RR^+\setminus 0$  найдутся  целая функция   $f\not\equiv 0$  экспоненциального типа, обращающаяся в нуль на  ${\mathrm Z}$,  и борелевское  множество $E\subset \RR$ конечной линейной меры Лебега  ${\mathfrak m}_1(E)<+\infty$, для которых 
\begin{equation}\label{fgiK}
\ln\bigl|f(iy)\bigr|\leq \ln\bigl|g(iy)\bigr|+\varepsilon |y|\quad\text{при всех $y\in \RR\setminus E$.}
\end{equation}
\item\label{IIKh} Для логарифмической субмеры распределения точек ${\mathrm Z}$, определённой как
\begin{equation}\label{{Kh89e}m}
\ell_{{\mathrm Z}}(r,R)\overset{\eqref{sumZ}}{:=}\max \left\{ \sum_{\stackrel{r<|z|\leq R}{z\in {\mathrm Z}}}\Re^+ \frac{1}{z},
\sum_{\stackrel{r<|z|\leq R}{z\in {\mathrm Z}}}\Re^- \frac{1}{z}\right\},
\end{equation}
при любом  $\varepsilon \in \RR^+\setminus 0$ существует $C_{\varepsilon}\in \RR$, для которого 
\begin{align}
\ell_{{\mathrm Z}}(r,R)\leq \ell_{{\mathrm W}}(r,R)
+\varepsilon \ln\frac{R}{r}+C_{\varepsilon} \quad\text{при всех\/ $0< r<R<+\infty$.}
\label{{Kh89e}g}
\end{align}

\item\label{IIIKh} Существует некоторая  целая функция   $g\not\equiv 0$ экспоненциального типа с   $\Zero_g{\lfloor}_{\CC_{\rh}}={\mathrm W}$, для которой  при любом $\varepsilon \in \RR^+\setminus 0$  найдутся  такие целая функция   $f\not\equiv 0$ экспоненциального типа с $f({\mathrm Z})=0$  и борелевское  подмножество $E\subset \RR$, что\/    ${\mathfrak m}_1(E)<+\infty$  и имеет место \eqref{fgiK}.
\end{enumerate}
\end{theorem}

При переходе  к более жёсткому, по сравнению с  теоремами \ref{theK0}, \ref{thK1-1} и \ref{theKh}, требованию $\varepsilon =0$
 как в задачах вида \eqref{fgiR}, так  и вида   \eqref{fgiRM} естественным образом  
возникает необходимость уже  различать случаи сходимости или расходимости 
  {\it логарифмических  интегралов\/} 
 \cite{Koosis88}, \cite{Koosis92}, \cite{Koosis96}, \cite[1.3]{BaiTalKha16},  \cite [4.6, (4.16)]{KhaShmAbd20} 
\begin{equation}\label{J2}
\int_{-\infty}^{+\infty}\frac{\ln^+ \bigl|g(iy)\bigr|}{1+y^2}\dd y ,  
\quad 
\int_{-\infty}^{+\infty}\frac{M^+ (iy)}{1+y^2}\dd y.   
\end{equation}
В случае конечности  первого  интеграла  в \eqref{J2}  целая функция $g$ экспоненциального типа называется 
{\it функцией класса Картрайт вдоль $i\RR$.\/}  При этом задача в версии  \eqref{fgiR} может быть легко вписана в 
знаменитые теоремы Бёрлинга\,--\,Маль\-я\-в\-е\-на о мультипликаторе и о радиусе полноты  1960-х гг. \cite{BM62}, \cite{BM67}, \cite{Kah62}, \cite{Red77}, \cite{Mal79}, \cite{Kra89}, \cite{Kha94}, \cite{HJ94}, \cite{MasNazHav05}, \cite[гл.~2]{Khsur} даже с ограничениями на 
величины типов целых функций. Возможна объединяющая 
формулировка теорем Бёрлинга\,--\,Маль\-я\-в\-е\-на, мотивированная формулировкой теоремы Рубела\,--\,Мальявена. 
\begin{theorem}\label{thBM1} При любом    $c\in \RR^+\setminus 0$ для любого 
 распределения  точек ${\mathrm Z}\subset \CC$  эквивалентны следующие четыре  утверждения: 
\begin{enumerate}[{\rm I.}]
\item \label{Ieg0BM} Для любой целой функции $g\not\equiv 0$ экспоненциального типа и любого $b\in \RR^+$
найдётся  целая функция   $f\not\equiv 0$ экспоненциального типа 
\begin{equation}\label{tfgc}
\type\bigl[\ln|f|\bigr]< \pi c+\type[\ln|g|],
\end{equation} 
обращающаяся в нуль на ${\mathrm Z}$, для которой  $\bigl| f(z)\bigr|\leq \bigl| g(z)\bigr|$ при всех $z\in \overline \strip_{b}$.  

\item\label{llR0BM} Внешняя плотность Редхеффера от ${\mathrm Z}$ вдоль $i\RR$  строго меньше, чем  $c$. 

\item\label{IIg0BM} Существует  ограниченная на мнимой оси $i\RR$ 
 целая функция  $f\not\equiv 0$  экспоненциального типа $\type\bigl[\ln|f|\bigr]<\pi c$, обращающаяся в нуль на ${\mathrm Z}$.   

\item\label{IVg0BM} Существует  ненулевая целая функция  $f\not\equiv 0$ класса Картрайт вдоль $i\RR$
и типа $\type\bigl[\ln|f|\bigr]<\pi c$, обращающаяся в нуль на ${\mathrm Z}$.  
\end{enumerate}
\end{theorem}
Элементарные рассуждения с обоснованием теоремы \ref{thBM1} приведены в конце \S~\ref{mainres}.

Решение задачи в версии  \eqref{fgiRM} 
для  субгармонических  функций $M\not\equiv -\infty$ конечного типа при конечности второго интеграла в \eqref{J2} с ещё более жёстким
ограничениями  $\type\bigl[\ln |f|\bigr]\leq \pi c+ \type[M]$  в  \eqref{tfgc} и $\type\bigl[\ln |f|\bigr]\leq \pi c$ в утверждении   \ref{IIg0BM}  
для типа целой функции $f\not\equiv 0$ экспоненциального типа   из   \eqref{fgiRM} может быть получено  из результатов наших совместных с  Т.\,Ю.~Байгускаровым, Г.\,Р.~Талиповой и  Ф.\,Б. Хабибуллиным статей \cite{KhaTalKha14}  и \cite{BaiTalKha16} 2014-16 гг.

Наиболее сильные на начало  1990-х гг. результаты  в версии  \eqref{fgiR}  для случая  {\it целой функции\/ $g\not\equiv 0$ экспоненциального типа \/}  с, вообще говоря,  {\it расходящимся\/} первым логарифмическим  интегралом в \eqref{J2}
были получены  при условии расположения  как  ${\mathrm Z}\subset \CC$, так и  
$ {\Zero_g} \subset \CC$ вне какой-нибудь {\it пары\/} соответственно  {\it открытых\/} или  {\it замкнутых вертикальных   углов\/}
\begin{equation}\label{Xe}
{\mathrm X}_a:=\bigl\{z\in \CC \bigm| |\Re z|<a |z|\bigr\},\; 
\overline {\mathrm X}_a:=\bigl\{z\in \CC \bigm| |\Re z|\leq a |z|\bigr\}, \;  a\in [0,1],
\end{equation}
 {\it с биссектрисой $i\RR$,\/} где  ${\mathrm X}_0=\varnothing$, $\overline {\mathrm X}_0=i\RR$,  ${\mathrm X}_1=\CC\setminus \RR$,
$\overline {\mathrm X}_1=\CC$, когда  {\it раствор\/} углов  $2\arcsin a$ углов \eqref{Xe} при $0<a\to 0$ может быть сколь угодно мал.
 \begin{theorem}[{(\cite[основная теорема]{Kha91AA})}]\label{thKhAM}
Пусть для распределения точек  ${\mathrm Z}$ на $\CC$ и целой  функции  $g\not\equiv 0$ экспоненциального типа существует $a\in (0,1)$, для которого ${\mathrm Z}\subset \CC\setminus \overline {\mathrm X}_a$ и $\Zero_g\subset \CC\setminus \overline {\mathrm X}_a$. Для существования целой функции   $f$ экспоненциального типа с свойствами из  \eqref{fgiR} необходимо и достаточно, чтобы в обозначении 
\eqref{{Kh89e}m} для некоторого  $C\in \RR$  выполнялись неравенства 
\begin{equation}\label{ZWRKh}
\ell_{\mathrm Z}(r,R)\leq \frac{1}{2\pi}\int_r^R\frac{\ln\bigl|g(iy)g(-iy)\bigr|}{y^2}\dd y +C
\quad\text{при всех $1\leq r<R<+\infty$.}
\end{equation}
\end{theorem}
Из последних результатов 2020--22 гг. можно отметить определённые дополнительные продвижения по задаче в версии  \eqref{fgiR}  исключительно в рамках целых функций экспоненциального типа  в наших  с А. Е. Салимовой работах \cite{SalKha20U}, \cite{SalKha20}, \cite{SalKha21}, \cite{Sal22}. Ряд  утверждений из этих работ с модификациями по существу используются ниже в доказательствах. 
Б\'ольшую часть основных результатов из этих работ мы не приводим, поскольку они, после некоторой подготовки и корректировки, могут быть получены как  частные случаи результатов настоящей статьи, что в некоторой мере  прокомментировано в замечаниях. 
Часть основных результатов настоящей статьи докладывалась на Международной конференции по комплексному анализу памяти 
А.~А.~Гончара и А.~Г.~Витушкина  в октябре 2021 г., а сам  доклад доступен в полном объёме по ссылке \cite{Kha21t}.

 \subsection{Специальные   версии  основных результатов}
В формулировках приведённых ниже двух очень частных случаях результатов из \S~\ref{mainres} и \S~\ref{mainresMR} приоритет отдаётся   простоте и краткости, а приводятся  они лишь для целых функций экспоненциального типа без участия субгармонических функций. Но уже и  эти облегчённые варианты в достаточной мере  иллюстрирует существенное усиление  
предшествующих результатов даже  в рамках постановки лишь задачи \eqref{fgiR},  не затрагивающей субгармонические версии 
из \eqref{fgiRM} и  \eqref{UM=}--\eqref{{fgiRMu}l}.

Следующий результат ---  развитие теоремы \ref{thKhAM} с формулировкой, близкой по форме к  теореме  Мальявена\,--\,Рубела и теореме \ref{theKh}.

\begin{theorem}\label{th1_5} Пусть   $g\not\equiv 0$ ---  целая функция экспоненциального типа  и для распределения точек ${\mathrm Z}$ 
существует число $a\in (0,1)$, для которого внешняя плотность Редхеффера от сужения ${\mathrm Z}{\lfloor}_{\overline {\mathrm X}_a}$
 вдоль $i\RR$ конечна.

Тогда эквивалентны следующие три утверждения: 
\begin{enumerate}[{\rm I.}]
\item \label{Ieg0}  
 При любом  $b\in \RR^+$ найдётся  целая функция   $f\not\equiv 0$ экспоненциального типа, для которой $f({\mathrm Z})=0$ и 
$\bigl|f(z)\bigr|\leq \bigl|g(z)\bigr|$  при всех  $z\in \overline \strip_b$. 
\item\label{II2g0} При  значениях $n$ и $N$, пробегающих   соответственно\/ $\NN_0$ и\/ $\NN$,  имеем 
\begin{equation}\label{ellZMg}
\limsup\limits_{N\to  \infty}\sup\limits_{0\leq n<N}
\biggl(\ell_{\mathrm Z}(2^n,2^N)-\frac{1}{2\pi}\int_{2^n}^{2^N}\frac{\ln \bigl|g(iy)g(-iy)\bigr|}{y^2}\dd y\biggr)<+\infty.
\end{equation}

\item\label{IIg0} Для некоторого числа  $p\in [0,1)$ существуют  целая функция   $f\not\equiv 0$ экспоненциального типа с $f({\mathrm Z})=0$ и ${\mathfrak m}_1$-измеримое  подмножество  $E\subset \RR^+$ с функцией $r\underset{r\in \RR^+}{\longmapsto} {\mathfrak m}_1\bigl(E\cap [0,r]\bigr)$  порядка меньше единицы, для которых 
\begin{equation}\label{ineqfgE}
\ln\bigl|f(iy)\bigr|\leq 
\frac{1}{2\pi}\int_0^{2\pi}\ln \bigl|g\bigl(iy+|y|^pe^{i\theta}\bigr)\bigr|\dd \theta
+|y|^p \quad\text{при  $|y|\in \RR^+\setminus E$}.
\end{equation}
\end{enumerate}
\end{theorem}

Следующий результат  --- прямое обобщение теоремы  Мальявена\,--\,Рубела.

\begin{theorem}\label{th1_5MR}  Пусть   ${\mathrm Z}\subset \CC$ распределение точек точно такое же, что и  в теореме\/  {\rm \ref{th1_5}},
а\/  ${\mathrm W}\subset \CC$ ---  распределение точек конечной верхней плотности и 
\begin{equation}\label{ellZMgMR}
\limsup\limits_{N\to  \infty}\sup\limits_{0\leq n<N}
\Bigl(\ell_{\mathrm W}(2^n,2^N)-\ell_{{\mathrm W}{\lfloor}_{\CC_{ \rh}}}(2^n,2^N)\Bigr)<+\infty,
\end{equation}
где ${\mathrm W}{\lfloor}_{\CC_{ \rh}}$ --- сужение ${\mathrm W}$ на $\CC_{ \rh}$.  
В частности, \eqref{ellZMgMR}, очевидно, выполнено,  если\/ ${\mathrm W}\subset \CC_{\overline\rh}$. 
Тогда эквивалентны  следующие три утверждения:  
\begin{enumerate}[{\rm I.}]
\item \label{Ieg0MR}  
Для любой целой функции $g\not\equiv 0$ экспоненциального типа с  $g({\mathrm W})=0$ 
при любом  $b\in \RR^+$ найдётся  целая функция   $f\not\equiv 0$  экспоненциального типа, 
обращающаяся в нуль на ${\mathrm Z}$, для которой 
$\bigl|f(z)\bigr|\leq \bigl|g(z)\bigr|$   при всех  $z\in \overline \strip_b$. 
\item\label{II2g0MR} При  значениях $n$ и $N$, пробегающих   соответственно\/ $\NN_0$ и\/ $\NN$,  имеем 
\begin{equation}\label{ellZMgMRl}
\limsup\limits_{N\to  \infty}\sup\limits_{0\leq n<N}
\Bigl(\ell_{\mathrm Z}(2^n,2^N)-\ell_{\mathrm W}(2^n,2^N)\Bigr)<+\infty.
\end{equation}

\item\label{IIg0MR} Для некоторого числа  $p\in [0,1)$ существуют  
целая функция $g\not\equiv 0$ экспоненциального типа с  $\Zero_g {\lfloor}_{\CC_{\rh}}={\mathrm W}{\lfloor}_{\CC_{\rh}}$, а также 
целая функция   $f\not\equiv 0$ экспоненциального типа с $f({\mathrm Z})=0$ и множество  $E\subset \RR^+$ такие же, как 
в утверждении\/ {\rm \ref{IIg0}} теоремы\/ {\rm \ref{th1_5}}, для которых имеет место  \eqref{ineqfgE}.
\end{enumerate}
\end{theorem}

Теорема \ref{th1_5}, выведенная  из основных результатов в конце \S~\ref{mainres}, теорема  
\ref{th1_5MR}, являющаяся частным случаем следствия \ref{corefZ} из \S~\ref{mainresMR}, 
и, тем более, основные результаты статьи из  \S~\ref{mainres} и \S~\ref{mainresMR} 
усиливают и обобщают   предшествующие результаты в нескольких различных направлениях. 

Во-первых, ранее во всех предшествующих результатах, включая и последние   \cite[основная теорема]{SalKha20}, \cite[теоремы 2.1 и 4.3]{SalKha21}, \cite[теоремы 2, A и B, замечания 1 и 2]{Sal22} 2020--22 гг., при требованиях вида \eqref{fgiR} накладывались различные  довольно жёсткие ограничения на распределение корней  $\Zero_g$ в паре углов  $\overline{\mathrm X}_a\overset{\eqref{Xe}}{\supset} i\RR$ при некотором $a>0$, в то времы как в теореме \ref{th1_5}   целая функция $g\not\equiv 0$ экспоненциального типа  {\it произвольная,\/} 
а в теореме \ref{th1_5MR} на распределение точек ${\mathrm W}$ наложено лишь  одно  условие \eqref{ellZMgMR},  
по которому в правой полуплоскости $\CC_{ \rh}$ точек из ${\mathrm W}$ в некотором смысле не меньше, чем в левой.
 
Во-вторых, ограничения на рост функции $|f|\leq |g|$ рассматривались ранее  только на $i\RR\overset{\eqref{{strip}c}}{=}\overline \strip_0$,  а в   теоремах  \ref{th1_5} и \ref{th1_5MR}  эти ограничения рассматриваются на полосе $\overline \strip_b$ сколь угодно большой  ширины  $2b\geq 0$.

В-третьих, ограничения на  распределение точек ${\mathrm Z}$ в  \eqref{Kh88e}, \eqref{{Kh89e}m}--\eqref{{Kh89e}g},  \eqref{ZWRKh} предполагают проверку неравенств по множеству мощности континуум всех интервалов $(r,R]\subset \RR^+$ с $r\geq 1$, в то время как в утверждении \ref{II2g0} теоремы \ref{th1_5} и утверждении \ref{II2g0MR} теоремы \ref{th1_5MR} требуется проверка единственного условия \eqref{ellZMg} и лишь по счётному множеству интервалов $(2^n,2^N]\subset \RR^+$, $0\leq n<N\in \NN$.  

Наконец, ослабление исходной посылки в импликациях  \ref{IIIMR}$\Rightarrow$\ref{IMR} теоремы Мальявена\,--\,Рубела, а также 
\ref{IIIKh}$\Rightarrow$\ref{IKh} теоремы \ref{theKh}, как и достаточного условия \eqref{ZWRKh} в теореме \ref{thKhAM}
с сохранением соответствующих эквивалентностей означает усиление этих теорем.  Именно ослабления исходной посылки в трёх направлениях предложены  в утверждении   \ref{IIg0} теоремы \ref{th1_5} и утверждении \ref{IIg0MR} теоремы \ref{th1_5MR}, где  неравенства $|f(iy)|\leq |g(iy)|$ для всех $y\in \RR$ заменены на  гораздо более слабые 
прологарифмированные неравенства  \eqref{ineqfgE} с интегральными средними по расширяющимся окружностям от $\ln |g|$ с аддитивной 
возрастающей добавкой в правой части вне довольно массивного исключительного множества $E$.   Значительно  более слабые версии 
этих утверждений  представлены в \S~\ref{mainres} в субгармонической версии   в утверждениях \ref{II_3} основной теоремы и  
 \ref{II} из теоремы \ref{th2_1}, а также в \S~\ref{mainresMR} в утверждениях \ref{II_3mu}  теоремы \ref{th_mu} и \ref{IIZ} из  теоремы \ref{th2_1Z}.

В статье не излагаются применения наших основных результатов к вопросам нетривиальности весовых пространств целых функций экспоненциального типа, полноты экспоненциальных систем в пространствах функций, к теоремам единственности для целых функций экспоненциального типа, к существованию му\-ль\-т\-и\-п\-л\-и\-к\-а\-т\-о\-р\-ов, гасящих рост целой функции вдоль прямой, к представлению мероморфных функций в виде отношения целых функций экспоненциального типа  с ограничениями на рост этих  функций  вдоль прямой, к аналитическому продолжению рядов, к задачам спектрального анализа-синтеза в пространствах голоморфных функций и пр. подобно тому, как это проделано или проиллюстрировано в  \cite{MR}, \cite{Kra72},
\cite{Kha88}, \cite{KhaD88},  \cite{Kha89}, \cite{Kha91AA}, \cite{Kha01l}, \cite{Kha09}, \cite{Khsur}, \cite{SalKha20}. 
Эти применения предполагается изложить отдельно в ином месте несколько  позже.

\section{Основные результаты}\label{mainres}

\subsection{Формулировка основной теоремы}\label{mainres1}
 Для  распределения зарядов $\nu$   
\begin{align}
\ell_{\nu}^{\rh}(r, R)&
:=\int_{r < | z|\leq R} \Re^+ \frac{1}{ z} \dd \nu(z)\in \RR, \quad 0< r < R < +\infty ,
\label{df:dDlm+}\\
\ell_{\nu}^{\lh}(r, R)&
:=\int_{r< |z|\leq R}\Re^- \frac{1}{z} \dd \nu(z)\in \RR,  \quad 0< r < R <  +\infty ,
\label{df:dDlm-}
\end{align}
---  соответственно {\it правая\/} и {\it левая логарифмические функции интервалов\/} $(r,R]$ на $\RR^+$.  В случае  {\it распределения масс\/} $\mu$ они порождают его 
{\it правую\/} $\ell_{\mu}^{\rh}$ и {\it левую $\ell_{\mu}^{\lh}$ логарифмические меры\/} на $\overline \RR^+\setminus 0$, допуская  и $R=+\infty$  в \eqref{df:dDlm+}--\eqref{df:dDlm-} с возможными  значениями $+\infty$ для $\ell_{\mu}^{\rh}(r, +\infty)$ 
и $\ell_{\mu}^{\lh}(r, +\infty)$,  а также его двустороннюю  {\it логарифмическую субмеру} на $\RR^+\setminus 0$, определённую как  
\begin{equation}\label{df:dDlLm}
\ell_{\mu}(r, R):=\max \bigl\{ \ell_{\mu}^{\lh}(r, R), \ell_{\mu}^{\rh}(r,R)\bigr\}\in \overline \RR^+, \quad 0< r < R \leq +\infty. 
\end{equation}
Для распределения точек ${\mathrm Z}$ это даёт в точности  $\ell_{\mathrm Z}(r, R)$ из  \eqref{{Kh89e}m}.

Развивая \cite[II]{Carleson}, \cite{Rodgers}, \cite{Federer},    \cite[гл. 2]{EG}, \cite{Eid07}, \cite[5.2]{VolEid13}
\cite[определение 3]{Kha22}, для $d\in  \RR^+$, функции  $r\colon \CC\to   \overline \RR^+\setminus 0$ и {\it гамма-функции\/} $\Gamma$ внешнюю   меру 
 \begin{equation}\label{mr}
{\mathfrak m}_d^r\colon S\underset{S\subset \CC}{\longmapsto}  
\inf \Biggl\{\sum_{k} \dfrac{\pi^{d/2}}{\Gamma (1+d/2)}r_k^d\biggm| S\subset \bigcup_{k} 
\overline D_{z_k}(r_k), \, z_k\in \CC, \, r_k \leq  r(z_k)\Biggr\}
\end{equation}
 называем {\it $d$-мерным обхватом Хаусдорфа переменного радиуса обхвата\/ $r$.\/} При этом  
через {\it постоянные функции\/} $r>0$ определяется {\it $d$-мерная    мера Хаусдорфа\/}
\begin{equation}\label{hH}
{\mathfrak m}_d\colon S\underset{S\subset \CC}{\longmapsto}  \lim_{0<r\to 0} {\mathfrak m}_d^r(S)
\underset{r>0}{\geq} {\mathfrak m}_d^r(S)\geq {\mathfrak m}_d^\infty(S),
\end{equation}
являющаяся регулярной  мерой Бореля. По определению \eqref{mr}, очевидно, 
\begin{equation}\label{mts}
{\mathfrak m}_d\geq {\mathfrak m}_d^{r}\geq {\mathfrak m}_d^{t}\geq {\mathfrak m}_d^{\infty}
\quad\text{для любых пар  функций $r\leq t$.} 
\end{equation}

В частности,   ${\mathfrak m}_2$ ---  это {\it плоская мера  Лебега на\/} $\CC$,  а для любой  липшицевой кривой $L$ в $\CC$  сужение   ${\mathfrak m}_1{\lfloor}_L$ --- это мера длины дуги на липшицевой 
кривой $L$ \cite[3.3.4A]{EG}, что вполне согласовано с предшествующим обозначением  ${\mathfrak m}_1$ для линейной меры  Лебега на $\RR$.
Кроме того,  $0$-мерная   мера Хаусдорфа ${\mathfrak m}_0$ множества -- это число элементов в нём, а также 
${\mathfrak m}_d={\mathfrak m}_d^r=0$ при любом    $d>2$.

Как и  в \cite[предисловие]{EG}, расширенная числовая функция {\it интегрируема\/} по мере  Радона на множестве, если интеграл от неё по этой мере  корректно определён значением из $\overline \RR$. Интегрируемая функция {\it суммируема,\/}  если соответствующий  интеграл от неё конечен, т.е. принимает значения из $\RR$.

Для функций  $r\colon S\to \RR^+$  на подмножестве $S\subset \CC$ будут использованы  {\it интегральные средние 
 с переменным радиусом $r$  по  окружностям}
\begin{equation}
v^{\circ r}\colon z\underset{z\in S}{\longmapsto}\frac{1}{2\pi} \int_{0}^{2\pi}  v\bigl(z+r(z)e^{i\theta}\bigr) \dd \theta 
\label{vpC}
\end{equation}
для  ${\mathfrak m}_1$-интегрируемых  функций  $v$  на окружностях $\partial D_z\bigl(r(z)\bigr)$ при $z\in S$, а также 
{\it интегральные средние  с переменным радиусом $r$   по кругам\/}
\begin{equation}
v^{\bullet r}\colon z\underset{z\in S}{\longmapsto}
\frac{1}{\pi (r(z))^2} \int_{\overline  D_z(r(z))}  v \dd \mathfrak m_2  
\label{vpD}
\end{equation}
для $\mathfrak m_2$-интегрируемых  функций $v$ на кругах $\overline  D_z\bigl(r(z)\bigr)$ при $z\in S$. 
Для любой функции $u$, субгармонической  на открытой окрестности объединения кругов 
\begin{equation}\label{Sdrcup}
 S^{\cup r}:=\bigcup_{z\in S}\overline  D_z\bigl(r(z)\bigr)\subset \mathbb C 
\end{equation}
из  \eqref{vpD}, имеем неравенства \cite[теорема 2.6.8]{Rans}
\begin{equation}\label{vbc}
u(z)\leq u^{\bullet r}(z)\leq u^{\circ r}(z) \leq \sup_{D_z(r(z))}u
\quad\text{при всех }z\in S^{\cup r}.
\end{equation}
Для $\mathfrak m_1$-измеримого  подмножества $E\subset \RR^+$ будет использована характеризующая  его 
 {\it возрастающая функция\/} \cite[лемма 1]{SalKha20U}
\begin{equation}\label{qE}
q_E\colon r\underset{r\in \RR^+}{\longrightarrow} \mathfrak m_1 \bigl(E\cap [0,r]\bigr)\ln \frac{er}{\mathfrak m_1 \bigl(E\cap [0,r]\bigr)}
\underset{r\in \RR^+}{\leq} r.
\end{equation}
Задачи \eqref{UM=} и \eqref{UMbbul}--\eqref{{fgiRMu}l} решает следующий результат.
\begin{maintheorem}\label{th2main} 
Пусть   $\nu$ --- распределение масс на $\CC$, для которого
\begin{equation}\label{nubstr-}
s:= \inf_{z\in \supp \nu}|\Re z|>0, 
\qquad \liminf_{\stackrel{z\to \infty}{z\in \supp \nu}}\frac{|\Re z|}{|z|}>0. 
\end{equation}
Тогда для любой субгармоническая функции $M$ конечного  типа 
следующие пять  утверждений\/ {\rm \ref{I}--\ref{II_3}} эквивалентны:  
\begin{enumerate}[{\rm I.}]
\item\label{I}   Для  любого  $b\in [0,s)$ существует субгармоническая функция $U\not\equiv -\infty$ конечного типа 
с  распределением масс Рисса 
$\frac{1}{2\pi}{\bigtriangleup}U\geq \nu$, для которой 
\begin{equation}\label{UeqM}
U(z)\equiv M(z)\quad\text{при всех $z\in \overline\strip_b$}.
\end{equation}

\item\label{II_2} При  значениях $n$ и $N$, пробегающих   соответственно\/ $\NN_0$ и\/ $\NN$,  имеем 
\begin{equation}\label{lJ2}
\limsup\limits_{N\to  \infty}\sup\limits_{0\leq n<N}
\biggl(\ell_{\nu}(2^n,2^N)-\frac{1}{2\pi}\int_{2^n}^{2^N}\frac{M(iy)M(-iy)}{y^2}\dd y\biggr)<+\infty.
\end{equation}

\item\label{I_2}  
Для  любой пары субгармонических функций $v$ и $m$ с распределениями масс Рисса соответственно 
$\frac{1}{2\pi}{\bigtriangleup} v=\nu$ и  $\frac{1}{2\pi}{\bigtriangleup}m=\frac{1}{2\pi}{\bigtriangleup}M{\lfloor}_{\strip_{s}}$
при   каждом $b\in [0,s)$ найдётся целая функция $h\not\equiv 0$, с которой  сумма\/ $v+m+\ln |h|$ ---  субгармоническая функция
  конечного типа и 
\begin{equation}\label{umM}
v(z)+m(z)+\ln\bigl|h(z)\bigr|\leq M(z) \quad\text{\it при  всех $z\in \overline\strip_b$.} 
\end{equation} 

\item\label{I_3} 
Для  произвольной субгармонической функции   $v$ с распределением масс Рисса $\frac{1}{2\pi}{\bigtriangleup} v=\nu$ при любых  $b\in [0,s)$, $d\in (0,2]$ и функции $r\colon \CC\to (0,1]$ с 
\begin{equation}\label{qr}
\inf_{z\in \CC} \frac{\ln r(z)}{\ln(2+ |z|)}>-\infty
\end{equation}
найдутся целая функция $h\not\equiv 0$ и подмножество $E_b\subset \CC$, для которых   
$v+\ln |h|$ ---  субгармоническая функция  конечного типа  и 
\begin{align}
 v(z)+\ln\bigl|h(z)\bigr|&\leq M^{\bullet r}(z) \quad\text{при всех $z\in \overline\strip_b$, а также}
\label{ubullet}
\\
v(z)+\ln\bigl|h(z)\bigr|&\leq M(z)\quad\text{при всех $z\in \overline\strip_b\setminus E_b$, где}
\label{ubull1}
\\
{\mathfrak m}_d^r(E_b\cap S)&\leq \sup_{z\in S}r(z)\quad\text{для любого $S\subset \CC$}.
\label{ubull}
\end{align}

\item\label{II_3} Существуют\/ ${\mathfrak m}_1$-измеримая функция $q_0\colon \RR\to \overline \RR^+$, 
непрерывная функция $q\colon \RR\to \RR^+$  конечного типа с  убывающей при некоторых $P\in \RR^+$ и $t_0\in \RR^+$   
функцией  $t\underset{t\geq t_0}{\longmapsto} \dfrac{q(t)+q(-t)}{t^P}$, 
субгармоническая  функция  $U\not\equiv -\infty$ конечного типа
с распределением масс Рисса\/  $\frac{1}{2\pi}{\bigtriangleup}U\geq \nu$ и\/ $\mathfrak m_1$-из\-м\-е\-р\-и\-мое множество  $E\subset \RR^+$ с функцией $q_E$ из \eqref{qE}, для которых  одновременно 
\begin{gather}
U(iy)+U(-iy)\underset{y\in \RR^+\setminus E}{\leq} M^{\circ q}(iy)+M^{\circ q}(-iy)+q_0(y)+q_0(-y),
\label{{UM0}M}\\
\int_1^{+\infty}\bigl(q_0(t)+q_0(-t)+q(t)+q(-t)+q_E(t)\bigr)\frac{\dd t}{t^2}<+\infty.
\label{{UM0}E}
\end{gather}
\end{enumerate}
\end{maintheorem}

\begin{remark}\label{rem0}
Условия на $\supp \nu$ из \eqref{nubstr-} эквивалентны одному условию:  
\begin{enumerate}
\item[{\rm [$\upnu$]}] {\it Существуют числа $s\in \RR^+$ и  $a\in (0,1)$, для которых 
\begin{equation}\label{nubstr}
  \supp \nu\overset{\eqref{Xe}}{\subset} \CC\setminus \bigl(\overline {\mathrm X}_a\cup \overline \strip_b\bigr)
\quad\text{для любого  $b\in [0,s)$}.
\end{equation}
}
\end{enumerate}
\end{remark}

\begin{example}\label{ex2_1} Степенные функции $q_0(y)\equiv q(y)\equiv |y|^p$ степени  $p\in [0,1)$
обладают  всеми свойствами, требуемыми в утверждении\/ {\rm \ref{II_3}.}  
Если при этом ещё и порядок функции $r\underset{r\in \RR^+}{\longmapsto} {\mathfrak m}_1\bigl(E\cap [0,r]\bigr)$
меньше единицы, то найдутся числа $C\geq 1$, $p_E\in [0,1)$ и $r_E\geq 1$,   для которых ${\mathfrak m}_1\bigl(E\cap [0,r]\bigr)\leq Cr^{p_E}\leq r$
при  всех $r\geq r_E$. Отсюда в силу возрастания  функции  $x\underset{x\in [0,r]}{\longmapsto} x\ln\frac{er}{x}$  имеем
\begin{equation*}
q_E(r)\overset{\eqref{qE}}{\leq} Cr^{p_E}\ln \frac{er}{Cr^{p_E}}
\leq Cr^{p_E}\ln er
\quad\text{при всех $r\geq r_E$.}
\end{equation*} 
Очевидно, для таких функций $q_0$, $q$ и $q_E$  выполнено \eqref{{UM0}E}. 
\end{example}

\begin{remark}\label{rem3}
Условие   \eqref{qr} для  $r\colon \CC\to (0,1]$ эквивалентно существованию достаточно малого   $c\in \RR^+\setminus 0$ и больших   
$R\in \RR^+$ и $P\in \RR^+$, для которых 
\begin{equation}\label{nubstrr}
 r(z)\geq\begin{cases}
 c>0&\text{ при всех $z\in  R\overline \DD$},\\ 
\dfrac{1}{(1+|z|)^P}&\text{ при всех $z\in \CC\setminus R\overline \DD$}.
\end{cases}
\end{equation}
В частности, при такой  функции $r$  для исключительного множества $E_b\subset \CC$ из утверждения 
\ref{I_3} соотношение \eqref{ubull} при выборе $S:=\CC\setminus t\overline \DD$ 
даёт 
$$
{\mathfrak m}_d^r(E_b\setminus t\overline \DD) \overset{\eqref{mts}}{=}O\Bigl(\frac{1}{t^P}\Bigr)\quad\text{при $t\to +\infty$}.
$$
Отсюда, например,   при выборе $d:=1$ для  любого сколь угодно большого $P>0$  исключительное множество $E_b$ 
в утверждении \ref{I_3} может быть выбрано так, что его часть, лежащая на вертикальных прямых  
$x+i\RR$ с  $x\in [-b,b]$,  не только конечной линейной меры Лебега ${\mathfrak m}_1$,  но и быстро уменьшается по мере ${\mathfrak m}_1$ со степенной  скоростью $t^{-P}$ вне отрезков $x+i[-t,t]$ при $t\to +\infty$.  
\end{remark}

\begin{remark}\label{remmt}
Доказательство основной теоремы  проводится по схеме 
\begin{equation}\label{impl}
{\rm\ref{I_3}} \Rightarrow    {\rm\ref{II_3}}\Rightarrow  {\rm \ref{II_2}}\Rightarrow {\rm\ref{I}} \Rightarrow {\rm\ref{I_2}} \Rightarrow{\rm\ref{I_3}}, 
\end{equation}
где в  доказательствах импликаций, предшествующих  {\rm\ref{II_2}} в \eqref{impl}, и в доказательстве  последней импликации \ref{I_2}$\Rightarrow$\ref{I_3}  не будут использоваться  ограничения \eqref{nubstr-} на 
распределение масс $\nu$. По  замечанию \ref{rem2_3}  импликации \ref{I}$\Rightarrow$\ref{II_3} и  \ref{I_2}$\Rightarrow$\ref{II_3} также верны без ограничений на $\nu$. Следовательно, импликации \ref{I_3}$\Rightarrow$\ref{II_3}$\Rightarrow$\ref{II_2}, 
\ref{I}$\Rightarrow$\ref{II_3} и  \ref{I_2}$\Rightarrow$\ref{II_3}  истинны для произвольного распределением масс $\nu$.   
Доказательства же импликаций \ref{II_2}$\Rightarrow$\ref{I}$\Rightarrow$\ref{I_2} использует 
ограничения \eqref{nubstr-} или эквивалентное им условие [$\upnu$] с \eqref{nubstr} из замечания \ref{rem0}  на распределение масс $\nu$. 
В то же время контрпример из \cite[пример 3.3.2]{Khsur} при  некотором  его  развитии   показывает, что 
импликации \ref{II_2}$\Rightarrow$\ref{I}  и \ref{II_2}$\Rightarrow$\ref{I_2} могут  быть неверны 
и при значительно более слабом требовании-неравенстве вида $\leq M(iy)+o\bigl(|y|\bigr)$ лишь на $i\RR$ при $|y|\to +\infty$, пропуская довольно массивное неограниченное исключительное подмножество  $E\subset \RR^+$, если не накладывать каких бы то ни было дополнительных ограничений на распределение  масс $\nu$  вблизи $i\RR$. Это позволяет с уверенностью сказать, что в рамках  логарифмической субмеры $\ell_{\nu}$ для распределения масс $\nu$ из  утверждения \ref{II_2} основная теорема даёт исчерпывающее решение задач из п.~\ref{Sspz}. 
Возможные подходы  к  снятию ограничений   \eqref{nubstr-} 
или эквивалентного им условия  [$\upnu$] с \eqref{nubstr} из замечания \ref{rem0}  на распределение масс $\nu$
 --- это  использование дополнительных характеристик распределений масс или точек, что обсуждается в заключительном комментарии к статье. 
 \end{remark}

\subsection{Доказательства импликаций, предшествующих  {\rm\ref{II_2}} в \eqref{impl}}

\begin{proof}[импликации \ref{I_3}$\Rightarrow$\ref{II_3}]  По утверждению {\rm\ref{I_3}}  при  $b:=0$ существует субгармоническая  функция    $U:=v+\ln|h|\not\equiv -\infty$ конечного типа в \eqref{ubull1}--\eqref{ubull} с распределением масс Рисса $\frac{1}{2\pi}{\bigtriangleup}U\geq \nu$, удовлетворяющая неравенству $U(iy)\leq M(iy)$  для всех $iy\in (\CC\setminus E_0)\cap i\RR$, где при $d:=1$ и  выборе $r\equiv 1$ в \eqref{qr} имеем ${\mathfrak m}_1^1(E_0) <+\infty$. Последнее по определению обхвата Хаусдорфа  \eqref{mr} радиуса $1$ означает, что некоторое объединение отрезков на $i\RR$ с конечной суммой длин покрывает 
$iE':=E_0\cap i\RR$ и  по утверждению {\rm\ref{I_3}}
\begin{equation}\label{UMER}
U(iy)\leq M(iy)\quad\text{для всех $y\in \RR\setminus E'$, где ${\mathfrak m}_1(E') <+\infty$}.
\end{equation} 
Отсюда    для  $E:=\bigl(E'\cup (-E')\bigr)\cap \RR^+$ по-прежнему 
${\mathfrak m}_1(E) <+\infty$ и 
\begin{equation}\label{UME}
U(iy)+U(-iy)\leq M(iy)+M(-iy)\quad\text{для всех $y\in \RR^+\setminus E$.}
\end{equation} 
 Для такого подмножества $E\subset \RR^+$ в силу возрастания  функции  $x\underset{x\in [0,r]}{\longmapsto} x\ln\frac{er}{x}$ 
по определению функции $q_E$ из \eqref{qE} при $r\geq {\mathfrak m}_1(E)$ следует 
\begin{equation}\label{qEr1}
q_{E}(r) \overset{\eqref{qE}}{\leq}
{\mathfrak m}_1(E)\ln\frac{er}{{\mathfrak m}_1(E)}\underset{r\to +\infty}{=}O(\ln r).
\end{equation}
Отсюда  при выборе $q_0=q=0$ получаем как конечность интеграла из \eqref{{UM0}E}, так и 
согласно \eqref{UME} неравенства \eqref{{UM0}M} при всех $y\in \RR^+\setminus E$. 
\end{proof}

\begin{remark}\label{rem2_3}  Доказательства импликаций  {\rm\ref{I}}$\Rightarrow${\ref{II_3}} и {\rm\ref{I_2}}$\Rightarrow${\ref{II_3}} ещё проще, поскольку выбор функции $U$ в первом случае тот же, что и в утверждении {\rm\ref{I}}, во втором  можем положить $U:=v+m+\ln|h|$, $E:=\varnothing$ для  обоих случаев, а рассуждения, начиная с \eqref{UME} те же, что и при доказательстве \ref{I_3}$\Rightarrow$\ref{II_3}. Кроме того, основную теорему 
можно дополнить ещё одним, эквивалентным  утверждениям  \ref{I}--\ref{II_3}, но  значительно более простым и кратким по сравнению с \ref{II_3}   утверждением 
\begin{enumerate}
\item[{\rm {VI.}}]\label{VIa} {\it Существуют субгармоническая  функция  $U\not\equiv -\infty$ конечного типа с\/  $\frac{1}{2\pi}{\bigtriangleup}U\geq \nu$ и\/ $\mathfrak m_1$-из\-м\-е\-р\-и\-мое подмножество  $E'\subset \RR$, для которых  
\begin{equation}\label{UME0}
U(iy)\leq M(iy)\quad\text{при всех $y\in \RR\setminus E'$, где ${\mathfrak m}_1(E')<+\infty$}. 
\end{equation}
}
\end{enumerate}
Действительно, импликация  \ref{I_3}$\Rightarrow${VI} доказывается дословно так же, как 
импликация  \ref{I_3}$\Rightarrow$\ref{II_3} до \eqref{UMER} включительно, 
а импликация  {VI}$\Rightarrow$\ref{II_3} так же, как  
импликация  \ref{I_3}$\Rightarrow$\ref{II_3} от \eqref{UMER} до конца доказательства. 
\end{remark}

\begin{proof}[импликации {\ref{II_3}}$\Rightarrow${\rm\ref{II_2}}]
Утверждение \ref{II_3} до \eqref{{UM0}M} включительно  --- это в точности посылка основного результата из \cite[теорема 1]{SalKha20U}  с той лишь разницей, что не предполагается убывание функции  
\begin{equation}\label{ubP}
t\underset{t\geq t_0}{\longmapsto} \frac{q(t)+q(-t)}{t^P}
\quad\text{при некоторых $P\in \RR^+$ и $t_0\in \RR^+$.}
\end{equation}
Даже по существенно ослабленному заключению из \cite[теорема 1, формулы (1.10)--(1.11)]{SalKha20U} 
для  любых  чисел $r_0>0$ и $N\in \RR^+$ найдётся $C\in \RR^+$, для которого 
\begin{multline}\label{rRuM}
\ell_{\nu}(r,R)\leq \frac{1}{2\pi}\int_{r}^{R}\frac{M(iy)M(-iy)}{y^2}\dd y
+C\frac{1}{2\pi}\int_{r}^{R}\frac{q_0(t)+q_0(-t)+2q_E(t)}{t^2}\dd t
 \\+C \int_r^R t^N\sup_{s\geq t} \frac{q(s)+q(-s)}{s^{2+N}}\dd t+C
\quad\text{при всех $r_0\leq r<R<+\infty$.}
\end{multline}
При условии убывания функции \eqref{ubP} с $P:=2+N$ при некотором  достаточно большом $t_0\in \RR^+$
 найдутся \cite[предложение 1]{SalKha20U} числа $r_1> 0$ и $C_1\in \RR^+$, для которых  
последний интеграл в \eqref{rRuM} оценивается сверху как 
\begin{equation*}
\int_r^R t^N\sup_{s\geq t} \frac{q(s)+q(-s)}{s^{2+N}}\dd t\leq C_1\int_r^R \frac{q(t)+q(-t)}{t^2}\dd t
\text{ при всех $r_1\leq r<R<+\infty$}.  
\end{equation*}
Последняя оценка вместе с  \eqref{rRuM} даёт \cite[предложение 1, формула (3.4)]{SalKha20U}
\begin{multline*}
\ell_{\nu}(r,R)\leq \frac{1}{2\pi}\int_{r}^{R}\frac{M(iy)M(-iy)}{y^2}\dd y
\\
+C_2\frac{1}{2\pi}\int_{r}^{R}\frac{q_0(t)+q_0(-t)+2q_E(t)+q(t)+q(-t)}{t^2}\dd t
+C_2
\end{multline*}
для некоторого $C_2\in \RR^+$ при всех $r_2:=\max\{1, r_0, r_1\}\leq r<R<+\infty$, откуда согласно 
 \eqref{{UM0}E} для некоторого $C_3\in \RR$ получаем 
\cite[следствие 1]{SalKha20U}
\begin{equation*}
\ell_{\nu}(r,R)\leq \frac{1}{2\pi}\int_{r}^{R}\frac{M(iy)M(-iy)}{y^2}\dd y
+C_3\quad\text{при всех $r_2\leq r<R<+\infty$}.  
\end{equation*}
Здесь  ввиду локальной ${\mathfrak m}_1$-суммируемости  субгармонических  функций на каждой прямой и неравенства 
субаддитивности    $\ell_{\nu}(r,R)\overset{\eqref{df:dDlLm}}{\leq} \ell_{\nu}(1,r_2)+\ell_{\nu}(r_2,R)$ при $r\in [1,r_2)$
\cite[3.2]{SalKha20U} фиксированное значение $r_2$ можно заменить на $1$, возможно увеличивая $C_3$.
После этого, перекидывая интеграл из правой части с противоположным знаком в левую  часть неравенства и применяя к полученной 
разности слева  сначала операцию $\sup\limits_{r\in [1,R)}$, а затем $\limsup\limits_{R\to +\infty}$, получаем 
\begin{equation}\label{lJ2r}
\limsup\limits_{R\to +\infty} \sup\limits_{1\leq r<R}\biggl(\ell_{\nu}(r,R)- \frac{1}{2\pi}\int_{r}^{R}\frac{M(iy)M(-iy)}{y^2}\dd y\biggr)<+\infty.
\end{equation}
Здесь левая часть  не меньше левой части  \eqref{lJ2}, что даёт утверждение \ref{II_2}.
\end{proof}

Доказательство ключевой в основной теореме импликации \ref{II_2}$\Rightarrow$\ref{I} 
потребует  предварительной подготовки 
и будет дано в \S~\ref{S7}. Доказательства импликаций  {\rm\ref{I}}$\Rightarrow$\ref{I_2}$\Rightarrow${\ref{I_3}}, завершающих доказательство основной теоремы,   приводятся  в \S~\ref{S9}.

\subsection{Случай  распределения точек  $\mathrm Z$ в роли распределения масс}\label{Ss2_2} 
В следующей теореме \ref{th2_1} рассматривается задача \eqref{fgiRM}, когда в роли распределения масс $\nu$  из основной теоремы выступает распределение точек ${\mathrm Z}\subset \CC$. 

\begin{theorem}\label{th2_1} Пусть    $M\not\equiv -\infty$ --- субгармоническая функция   конечного типа,  а     
${\mathrm Z}\subset \CC$ --- распределение точек, для которого при некотором $a\in (0,1)$ 
внешняя плотность Редхеффера от сужения  ${\mathrm Z}{\lfloor}_{\overline{\mathrm X}_a}$  вдоль $i\RR$ конечна.  

Тогда следующие четыре  утверждения\/ {\rm \ref{Ie}--\ref{II}} эквивалентны:
\begin{enumerate}[{\rm I.}]
\item \label{Ie}  
Для любых  $0\leq b<s\in \RR^+$  и субгармонической функции $m$ с распределением масс Рисса 
$\frac{1}{2\pi}{\bigtriangleup}m=\frac{1}{2\pi}{\bigtriangleup}M{\lfloor}_{\strip_s}$
найдётся  целая функция $f\not\equiv 0$, обращающаяся в нуль на   ${\mathrm Z}$,  для которой  субгармоническая функция 
$\ln |f|+m$ конечного типа и выполняются неравенства 
\begin{equation}\label{umMe}
\ln \bigl|f(z)\bigr|+m(z)\leq M(z) \quad\text{\it при  всех $z\in \overline \strip_b$.} 
\end{equation} 

\item\label{II2} При  значениях $n$ и $N$, пробегающих   соответственно\/ $\NN_0$ и\/ $\NN$,  имеем 
\begin{equation}\label{ellZM}
\limsup\limits_{N\to  \infty}\sup\limits_{0\leq n<N}
\biggl(\ell_{\mathrm Z}(2^n,2^N)-\frac{1}{2\pi}\int_{2^n}^{2^N}\frac{M(iy)M(-iy)}{y^2}\dd y\biggr)<+\infty.
\end{equation}

\item \label{IIe}  
 При любых  $b\in \RR^+$, $d\in (0,2]$ и функции $r\colon \CC\to (0,1]$ с ограничением \eqref{qr}
найдутся целая функция $f\not\equiv 0$  экспоненциального типа  с  $f({\mathrm Z})=0$ и   $E_b\subset \CC$, для которых 
 $\ln\bigl|f(z)\bigr|\leq M^{\bullet r}(z)$ 
{при всех $z\in \overline\strip_b$, а также}
 $\ln\bigl|f(z)\bigr|\leq M(z)$ при всех $z\in \overline\strip_b\setminus E_b$, где для $E_b$
выполнено \eqref{ubull}.

\item\label{II} 
Для целочисленного распределения масс $\nu:={\mathrm Z}$ 
выполнено утверждение\/ {\rm \ref{II_3}}   основной теоремы  с соотношениями \eqref{{UM0}M} и \eqref{{UM0}E}. 
\end{enumerate}
 \end{theorem}
\begin{proof} 
Для произвольной субгармонической фу\-н\-к\-ц\-ии $v$ с целочисленным распределением масс $\nu:={\mathrm Z}$  по одной из классических теорем Вейерштрасса найдётся  целая функция $f_{\mathrm Z}\not\equiv 0$, для которой  
\begin{equation}\label{vlnf}
\Zero_{f_{\mathrm Z}}={\mathrm Z}, \quad  v=\ln |f_{\mathrm Z}|.
\end{equation}
Доказательство теоремы \ref{th2_1}, которое проводим по схеме \ref{IIe}$\Rightarrow$\ref{II}$\Rightarrow$\ref{II2}$\Rightarrow$\ref{Ie}$\Rightarrow$\ref{IIe}, будет полностью основываться на основной теореме.   

Пусть выполнено утверждение \ref{IIe}.  Для целой  функции $f\not\equiv 0$ с  $f({\mathrm Z})=0$  из утверждения \ref{IIe} имеет место представление 
$f\overset{\eqref{vlnf}}{=}f_{\mathrm Z}h$. Отсюда по равенствам  $\ln |f|=\ln |f_{\mathrm Z}|+\ln|h|\overset{\eqref{vlnf}}{=}v+\ln|h|$ утверждение  \ref{IIe} даёт  утверждение 
\ref{I_3} основной теоремы.  Из импликации \ref{I_3}$\Rightarrow$\ref{II_3} основной теоремы, 
истинной по замечанию \ref{remmt} без ограничений на $\nu$,  следует, что истинна 
импликация \ref{IIe}$\Rightarrow$\ref{II}.

Пусть выполнено  утверждение  \ref{II}.    
По замечанию \ref{remmt} из импликации  \ref{II_3}$\Rightarrow$\ref{II_2} основной теоремы следует,
что для $\nu:={\mathrm Z}$ выполнено \eqref{lJ2}, что даёт в точности 
\eqref{ellZM} для ${\mathrm Z}=\nu$. Это доказывает истинность импликации \ref{II}$\Rightarrow$\ref{II2}.

Для доказательства импликации \ref{II2}$\Rightarrow$\ref{Ie} потребуется 

\begin{lemma}\label{lem2_1} 
Пусть   ${\mathrm Z}\subset \CC$ --- какое-либо распределение  точек 
внешней  плотности Редхеффера вдоль $i\RR$ строго меньшей, чем  $c\in \RR^+\setminus 0$. 

Тогда для любого $s\in \RR^+$ найдётся целая функции $f_s\not\equiv 0$ экспоненциального типа $\type\bigl[\ln |f_s|\bigr]<\pi c$, для которой 
$f_s({\mathrm Z})=0$ и $\bigl|f_s(z)\bigr|\leq 1$ при всех $z\in \overline \strip_{s}$.
\end{lemma}
\begin{proof}[леммы \ref{lem2_1}]
Теорема Бёрлинга\,--\,Мальявена о радиусе полноты  \cite{BM67}  в версии Р. М. Редхеффера \cite{Red77}, \cite[теорема 2.1.10]{Khsur}, после её двойственной переформулировки  в терминах целых функций экспоненциального типа и  перехода с помощью поворота на прямой угол  от $\mathbb{R}$ к $i\mathbb{R}$, обеспечивает существование целой функции $h_s\not\equiv 0$ экспоненциального типа 
\begin{equation}\label{fs1}
\type\bigl[\ln|h_s|\bigr]<\pi c, \quad   h_s({\mathrm Z})=0, \quad \sup\limits_{y\in \mathbb{R}}\bigl|h_s(iy)\bigr|\leq 1.
\end{equation}
Для ограниченной на мнимой оси $i\RR$ субгармонической функции $\ln|h_s|$ конечного типа 
интеграл Пуассона от  функции $\ln|h_s|$ по мнимой оси  для некоторого числа $A\in \RR^+$ задаёт гармонические мажоранты на $\CC_{\rh}$
и на $\CC_{\lh}$ 
\begin{equation*}
z\underset{z\in \CC\setminus i\RR}{\longmapsto} \frac{1}{\pi}\int_{-\infty}^{+\infty} \frac{|\Re z|\ln\bigl| h_s(iy)\bigr|}{(\Re z)^2+(\Im z-y)^2}
\dd y+A|\Re z|\overset{\eqref{fs1}}{\underset{z\in \CC\setminus i\RR}{\leq}} A|\Re z|.
\end{equation*}
Таким образом, выполнены  неравенства 
\begin{equation}\label{fastb}
\ln \bigl|h_s(z)\bigr|\underset{z\in \CC}{\leq} A |\Re z|\leq As\quad\text{при всех $z\in \overline\strip_s$.}
\end{equation}
Тогда  целая функция $f_s:=h_se^{-As}$ экспоненциального типа 
$$
\type\bigl[\ln|f_s|\bigr]=\type\bigl[\ln|h_s|\bigr]\overset{\eqref{fs1}}{<}\pi c,
$$
согласно \eqref{fs1}  обращается в нуль на ${\mathrm Z}$ и  
  $$
\ln\bigl|f_s(z)\bigr|\leq \ln\bigl|h_s(z)\bigr| -As\overset{\eqref{fastb}}{\leq} As-As=0\quad\text{при всех $z\in \overline \strip_{s}$},
$$
что завершает доказательство  леммы   \ref{lem2_1}. 
\end{proof}

Из  утверждения  \ref{II2} при $0\leq b<s\in \RR^+$ выведем утверждение  \ref{Ie}.   

Сужение ${\mathrm Z}{\lfloor}_{{\overline{\mathrm X}_a}\cup \overline\strip_ s}$, добавляет к сужению  ${\mathrm Z}{\lfloor}_{\overline{\mathrm X}_a}$  не более чем конечное число точек. Поэтому, очевидно, 
и распределение точек 
${\mathrm Z}{\lfloor}_{{\overline{\mathrm X}_a}\cup \overline\strip_ s}$ также конечной внешней плотности Редхеффера вдоль $i\RR$. 
По лемме \ref{lem2_1}  с ${\mathrm Z}{\lfloor}_{{\overline{\mathrm X}_a}\cup \overline\strip_ s}$ в роли ${\mathrm Z}$ 
существует  целая  функции $f_s$ экспоненциального типа, для которой 
\begin{equation}\label{faZa}
f_s({\mathrm Z}{\lfloor}_{{\overline{\mathrm X}_a}\cup \overline\strip_ s})=0, \quad 
\bigl|f_s(z)\bigr|\leq 1\quad\text{при всех $z\in \overline \strip_{s}$.}
\end{equation}
Наряду с распределением точек ${\mathrm Z}{\lfloor}_{{\overline{\mathrm X}_a}\cup \overline\strip_ s}$
рассмотрим оставшуюся часть распределения точек $\mathrm Z$,
для которой, при трактовке  её как целочисленной меры 
$\nu:={\mathrm Z}{\lfloor}_{\CC\setminus ({\overline{\mathrm X}_a}\cup \overline\strip_ s)}\leq {\mathrm Z}$,
очевидно, выполнено  условие {\rm [$\upnu$]} с  \eqref{nubstr} из замечания \ref{rem0}, эквивалентное  
условиям \eqref{nubstr-} основной теоремы, а из \eqref{ellZM} следует \eqref{lJ2}. 
Для  целочисленного  распределения масс $\nu$ существует целая функция $F\not\equiv 0$ с распределением корней 
$\Zero_F={\mathrm Z}{\lfloor}_{\CC\setminus ({\overline{\mathrm X}_a}\cup \overline\strip_ s)}\leq {\mathrm Z}$. Другими словами, функция  
$v:=\ln|F|$ субгармоническая  с распределением масс Рисса, равным  $\nu$.   
Из импликации  \ref{II_2}$\Rightarrow$\ref{I_2}  основной теоремы  для  этой 
субгармонической функции $v=\ln|F|$ найдётся целая функция $h\not\equiv 0$, для которой
\begin{equation}\label{vmf}
v+\ln |h|+m=\ln |Fh|+m
\end{equation}
 --- субгармоническая функция конечного типа и выполнено \eqref{umM}. Отсюда
\begin{equation}\label{Fm}
\ln \bigl|F(z)h(z)\bigr|+m(z) \overset{\eqref{vmf}}{\underset{z\in \CC}{\equiv}} v(z)+m(z)+\ln\bigl|h(z)\bigr|
\underset{z\in \overline\strip_b}{\overset{\eqref{umM}}{\leq}} M(z), 
\end{equation}
где целая функция $Fh$ обращается в нуль на  ${\mathrm Z}{\lfloor}_{\CC\setminus ({\overline{\mathrm X}_a}\cup \overline\strip_ s)}=\Zero_F$.
Тогда для целой функции $f:=f_sFh$ субгармоническая функция 
$\ln|f|+m=\ln |Fh|+m$ конечного типа как сумма таковых, и  при этом 
$$
\ln|f(z)|+m(z)=\ln|f_s(z)|+\ln |F(z)h(z)|+m(z)
\overset{\eqref{faZa},\eqref{Fm}}{\underset{z\in \overline\strip_b}{\leq}} M(z), 
$$
где  $f\not\equiv 0$ обращается в нуль на ${\mathrm Z}$, поскольку 
для сомножителя  $f_s$ имеем   \eqref{faZa}, а сомножитель $Fh$ обращается в нуль  на  ${\mathrm Z}{\lfloor}_{\CC\setminus ({\overline{\mathrm X}_a}\cup \overline\strip_ s)}$. Таким образом,    
установлено, что утверждение   \ref{II2} влечёт за собой утверждение  \ref{Ie}.   

Наконец, пусть выполнено утверждение \ref{Ie}. Для вывода из него утверждения  \ref{IIe} при заданном $b\geq 0$  выберем $s>b$ и вновь  
  рассмотрим произвольную субгармоническую  функцию $v$  с целочисленным распределением масс Рисса $\frac{1}{2\pi}{\bigtriangleup}v:=\nu:={\mathrm Z}$, как в  \eqref{vlnf}. Целая функция $f\not\equiv 0$
из утверждения \ref{Ie} представляется в виде $f\overset{\eqref{vlnf}}{=}f_{\mathrm Z}h$, где $h\not\equiv 0$ --- целая функция.
При этом  $v+m+\ln |h|=\ln |f_{\mathrm Z}h|+m=\ln |f|+m$. Отсюда по утверждению  \ref{Ie} субгармоническая функция 
$v+m+\ln |h|$ конечного типа, а из  \eqref{umMe} получаем 
\begin{equation*}
v(z)+m(z)+\ln |h(z)|\underset{z\in \CC}{\equiv} \ln \bigl|f(z)\bigr|+m(z)\overset{\eqref{umMe}}{\leq} M(z) \quad\text{\it при  всех $z\in \overline \strip_b$.} 
\end{equation*} 
Это означает, что выполнено утверждение \ref{I_2} основной теоремы для $\nu={\mathrm Z}$. 
Из импликации   \ref{I_2}$\Rightarrow$\ref{I_3} основной теоремы, истинной   по замечанию \ref{remmt} без ограничений на $\nu$,  
для функции $v=\ln|f_{\mathrm Z}|$ найдётся  некоторая целая функция  $h\not\equiv 0$, с которой выполнены 
неравенства \eqref{ubullet}--\eqref{ubull} с левой частью $v+\ln|h|=\ln |f_{\mathrm Z}h|$, являющейся субгармонической функцией конечного типа. 
Это означает, что для распределения масс ${\mathrm Z}=\nu$ нашлась целая функция $f:=f_{\mathrm Z}h$,
 обращающейся в нуль на ${\mathrm Z}$, со свойствами, требуемыми  в утверждении   \ref{IIe}.
Таким образом, импликация  \ref{Ie}$\Rightarrow$\ref{IIe} истинна, а доказательство теоремы 
\ref{th2_1} завершено. 
\end{proof}

Следующее следствие \ref{coref} позволяет объединить утверждения \ref{Ie} и \ref{IIe}  теоремы \ref{th2_1} в одно более простое и полностью избавится  от интегральных усреднений $M^{\bullet}$ и исключительного множества $E_b$ в утверждении \ref{IIe}  теоремы \ref{th2_1}.

\begin{corollary}\label{coref} 
Если в условиях теоремы\/ {\rm \ref{th2_1}} при некотором $s\in \RR^+\setminus 0$ сужение 
$\frac{1}{2\pi}{\bigtriangleup} M{\lfloor}_{\strip_s}$
--- целочисленное распределение масс, то в теореме\/ {\rm \ref{th2_1}} 
пару утверждений\/ {\rm \ref{Ie}} и\/ {\rm \ref{IIe}} можно заменить на одно более сильное  утверждение 
\begin{enumerate}
\item[{\rm \ref{Ie}$\cap$\ref{IIe}.}]  Для любого $b\in [0,s)$ 
найдётся целая функция   $f\not\equiv 0$ экспоненциального типа, обращающаяся в нуль на  ${\mathrm Z}$, для которой
\begin{equation}\label{umMes}
\ln \bigl|f(z)\bigr|\leq M(z) \quad\text{\it при  всех $z\in \overline \strip_b$} 
\end{equation} 
\end{enumerate}
с сохранением эквивалентности утверждениям\/ {\rm \ref{II2}} и\/ {\rm \ref{II}} теоремы\/ {\rm \ref{th2_1}}.
\end{corollary}

\begin{proof} Для любой субгармонической функции $m$ с целочисленным распределением масс Рисса 
$\frac{1}{2\pi}{\bigtriangleup}m=\frac{1}{2\pi}{\bigtriangleup}M{\lfloor}_{\strip_s}$
из утверждения \ref{Ie}  теоремы \ref{th2_1} найдётся такая целая функция $l\not\equiv 0$, что $m=\ln |l|$,    
а $\ln |fl|=\ln|f|+\ln|l|=\ln|f|+m$ --- субгармоническая функция конечного типа, 
для которой 
\begin{equation}\label{umMe+}
\ln \bigl|f(z)l(z)\bigr|\underset{z\in\CC}{\overset{\eqref{umMe}}{\equiv}}
 \ln \bigl|f(z)\bigr|+m(z)\leq M(z) \quad\text{при  всех $z\in \overline \strip_b$.} 
\end{equation} 
Таким образом, для целой  функции $fl$ экспоненциального типа имеет место  \eqref{umMe+}. Если переобозначить   произведение 
$fl$ как целую функцию $f$ экспоненциального типа, то утверждения \ref{Ie}  теоремы \ref{th2_1}  становиться в точности утверждением 
\ref{Ie}$\cap$\ref{IIe}, которое, очевидно, сильнее утверждения    \ref{IIe} теоремы \ref{th2_1}.
\end{proof}

\begin{corollary}\label{th2_2} Для любой целой функции\/  $g\not\equiv 0$ экспоненциального типа
и такого же, как в  теореме\/ {\rm \ref{th2_1}}, распределения точек ${\mathrm Z}$ каждое из утверждений\/  {\rm \ref{Ieg0}} и\/ {\rm \ref{II2g0}}  теоремы\/ {\rm \ref{th1_5}} эквивалентны утверждению\/ {\rm \ref{II}} теоремы\/ {\rm \ref{th2_1}}. 
\end{corollary}
\begin{proof} Положим $M:=\ln |g|\not\equiv -\infty$. Тогда  для любого числа $s>0$
сужение $\frac{1}{2\pi}{\bigtriangleup} M{\lfloor}_{\strip_s}=\Zero_g{\lfloor}_{\strip_s}$, очевидно, целочисленное распределение масс, 
а  утверждения  {\rm \ref{Ieg0}}  и\/ {\rm \ref{II2g0}}  теоремы\/ {\rm \ref{th1_5}} --- 
это в точности  соответственно утверждение \ref{Ie}$\cap$\ref{IIe} из следствия \ref{coref}
и утверждение  {\rm \ref{II2}}   теоремы\/ {\rm \ref{th2_1}}. 
Таким образом, следствие \ref{coref} влечёт за собой следствие  \ref{th2_2}.  
\end{proof}

\begin{proof}[теоремы \ref{th1_5}]  Утверждение \ref{Ieg0} при $E:=\varnothing$, очевидно, влечёт за собой \ref{IIg0}, 
которое согласно   примеру   \ref{ex2_1}  --- частный случай  утверждения  \ref{II}  теоремы \ref{th2_1}. 
Таким образом,  следствие \ref{th2_2} влечёт за собой  теорему \ref{th1_5}.
\end{proof} 

\begin{proof} [теоремы \ref{thBM1}] 
Эквивалентность \ref{llR0BM}$\Leftrightarrow$\ref{IIg0BM} --- двойственный вариант теоремы Бёрлинга\,--\,Мальявена о радиусе полноты в сочетании с классической теоремой Пэли\,--\,Винера, а эквивалентность \ref{IIg0BM}$\Leftrightarrow$\ref{IVg0BM}
--- одна из форм теоремы Бёрлинга\,--\,Мальявена о  мультипликаторе. Импликация   \ref{Ieg0BM}$\Rightarrow$\ref{IIg0BM} очевидна при выборе $g\equiv 1$. Наконец,  при выполнении утверждения   \ref{llR0BM} согласно   лемме \ref{lem2_1} при любом  $s\in \RR^+$ найдётся целая функции $f_s\not\equiv 0$ экспоненциального типа $\type\bigl[\ln |f_s|\bigr]<\pi c$, для которой 
$f_s({\mathrm Z})=0$ и $\bigl|f_s(z)\bigr|\leq 1$ при всех $z\in \overline \strip_{s}$.
Тогда для произвольной целой функции $g\not\equiv 0$ экспоненциального типа целая функция $f:=f_sg\not\equiv 0$
экспоненциального типа  
$$
\type \bigl[\ln|f|\bigr]\leq \type \bigl[\ln|f_s|\bigr]+\type \bigl[\ln|g|\bigr] <\pi c+\type \bigl[\ln|g|\bigr]
$$
обращается в нуль на ${\mathrm Z}\leq \Zero_{f_s}\leq \Zero_f$ и удовлетворяет неравенствам 
$$
\ln\bigl| f(z)\bigr|\equiv \ln\bigl|f_s(z) \bigr|+\ln\bigl|g(z) \bigr|\leq 
\ln\bigl|g(z)\bigr|\quad\text{при всех $z\in \overline \strip_s$}, 
$$
что даёт утверждение \ref{Ieg0BM} и завершает доказательство теоремы \ref{thBM1}.
\end{proof}

\section{Выметание  из  правой полуплоскости}\label{BalCrh}

\subsection{Выметания рода $0$ и $1$ распределений зарядов}
Для распределения заряда $\nu$ используем \cite[формула (1.9)]{KhaShm19}   его {\it функцию  распределения на\/}  $\RR$, 
обозначаемую как  $\nu_{\RR}\colon \RR\to  \RR$ и определённую  равенствами 
\begin{equation}\label{nuR} 
\nu_{\RR}(x_2)-\nu_{\RR}(x_1):=\nu\bigl((x_1,x_2]\bigr), \quad
-\infty <x_1<x_2<+\infty, 
\end{equation}
и   {\it функцию распределения $\nu_{i\RR}\colon \RR\to \RR$    на\/} $i\RR$, определённую равенствами
\begin{equation}\label{nuiR} 
\nu_{i\RR}(y_2)-\nu_{i\RR}(y_1):=\nu\bigl(i(y_1,y_2]\bigr), \quad -\infty <y_1<y_2<+\infty. 
\end{equation}
Поскольку эти функции распределения определены лишь с точностью до аддитивной постоянной, 
при необходимости используем их {\it нормировки в нуле}
\begin{equation}\label{nuo}
\nu_{\RR}(0):=0, \quad \nu_{i\RR}(0):=0.
\end{equation}
По построению \eqref{nuR} и \eqref{nuiR}    функции  $\nu_{\RR}$ и $\nu_{i\RR}$  локально ограниченной вариации  на $\RR$, а в случае {\it распределения масс\/} $\nu$ обе эти функции возрастающие. 
Обратно, любая функция локально ограниченной вариации на $\RR$ или $i\RR$ однозначно определяет распределение зарядов с носителем соответственно на  $\RR$ или $i\RR$. 

Мы напоминаем и адаптируем основные понятия и утверждения из \cite{KhaShm19} и \cite{KhaShmAbd20},
 а также частично из \cite{Kha91} и \cite{Kha91AA} о  выметании конечного рода $q\in \NN_0$ распределений зарядов, но пока применительно только к правой полуплоскости $\CC_{\rh}$ в случаях  $q:=0$ и $q:=1$. В 
\cite{KhaShm19} и \cite{KhaShmAbd20} в основном рассматривается {\it верхняя полуплоскость\/} $\CC^{\up}:=i\CC_{ \rh}$, что переносится на $\CC_{\rh}$ поворотом на прямой угол.

{\it Характеристическую функцию множества\/}  $S$ обозначаем через 
\begin{equation}\label{SdrS}
\boldsymbol{1}_S\colon z\underset{z\in \mathbb C}{\longmapsto} \begin{cases}
1&\text{ если $z\in S$},\\
0&\text{ если $z\notin S$}.
\end{cases}
\end{equation}
 
{\it Гармоническая  мера  для\/ $\CC_{\rh}$ в точке\/ $z\in \CC_{\rh}$} на интервалах $i(y_1,y_2]\subset i\overline\RR$
\begin{equation}\label{omega}
\omega_{\rh} \bigl(z,i(y_1,y_2]\bigr){\overset{\text{\cite[3.1]{KhaShm19}}}{:=}}\omega_{\CC_{\rh}}(z,i(y_1,y_2])
\underset{z\in \CC_{\rh}}{:=}\frac1{\pi}
\int_{y_1}^{y_2}\Re \frac{1}{z-iy} \dd y 
\end{equation} 
равна делённому на $\pi$ углу, под которым виден интервал $i(y_1,y_2]$ из точки $z\in \CC_{\rh}$ \cite[(3.1)]{Kha91AA}, \cite[1.2.1, 3.1]{KhaShm19},  а  в точках  мнимой оси $i\RR$ определяется как 
\begin{equation}\label{oiR}
\omega_{\rh} \bigl(iy,i(y_1,y_2]\bigr):=\boldsymbol{1}_{(y_1,y_2]}(y)
\quad\text{при $y\in  \RR$.}
\end{equation}
Для  распределения зарядов  $\nu $ при {\it классическом условии Бляшке\/} для  $\CC_{\rh}$
\begin{equation}\label{Blcl}
\ell_{|\nu|}^{\rh}(1,+\infty)\overset{\eqref{df:dDlLm}}{=} \int_{\CC_{\rh}\setminus  \DD} \Re \frac{1}{z}\dd |\nu| (z)<+\infty
\end{equation}
определено \cite[следствие 4.1, теорема 4]{KhaShm19}  
его классическое выметание из $\CC_{\rh}$ на  $ \CC_{\overline \lh}$ с носителем на  $  \CC_{\overline \lh}$, которое  в более широких рамках  \cite[определение 3.1]{KhaShmAbd20} 
представляет собой {\it выметание рода\/ $0$,\/} обозначавшееся в \cite{KhaShmAbd20} как
$\nu^{\bal[0]}_{\CC_{\overline \lh}}$. 
Здесь используется чуть более компактная  запись  
 $\nu^{\bal^0_{\rh}}:=\nu^{\bal[0]}_{\CC_{\overline \lh}}$. По определению {\it распределение зарядов\/}  $\nu^{\bal^0_{\rh}}$ --- это {\it сумма сужения\/}  $\nu{\lfloor}_{\CC_{\lh}}$ на $\CC_{\lh}$  с {\it распределением зарядов на $i\RR$,} определяемым в обозначениях \eqref{nuiR}  {\it функцией распределения\/}
\begin{equation}\label{mubal}
\nu^{\bal^0_{\rh}}_{i\RR}(y_2)-\nu^{\bal^0_{\rh}}_{i\RR}(y_1)\overset{\eqref{omega},\eqref{oiR}}{:=}
\int\limits_{\CC_{\overline \rh}} \omega_{\rh}\bigl(z, i(y_1,y_2]\bigr) \dd \nu(z)
\end{equation}
с нормировкой вида \eqref{nuo} при необходимости. 
Классическое выметание  рода $0$ не увеличивает полную  меру полной вариации  распределения зарядов, 
поскольку гармоническая мера \eqref{omega} вероятностная и 
\begin{equation}\label{omega1}
\bigl|\nu^{\bal^0_{\rh}}\bigr|(S)\underset{S\subset \CC}{\overset{\eqref{mubal}}{\leq}} |\nu|(S).
\end{equation}  

В \cite[определение 2.1]{KhaShmAbd20} вводилось понятие гармонического   заряда  рода\/ $1$ для верхней полуплоскости \/ $\CC^{\up}$ в точке\/ $z\in \CC^{\up}$,  обозначавшегося  в \cite[формула (2.1)]{KhaShmAbd20} через  $\Omega^{[1]}_{\CC^{\up}}$. Здесь используем 
поворот на прямой угол с переходом от $\CC^{\up}$ к $\CC_{\rh}$ и определим 
 {\it гармонический    заряд рода\/ $1$ для правой  полуплоскости \/ $\CC_{\rh}$} 
 как функцию $\Omega_{\rh}$ ограниченных  интервалов $i(y_1,y_2]\subset i\RR$ по правилу 
\begin{equation}\label{Ocr}
\Omega_{\rh}\bigl(z,i(y_1,y_2]\bigr)
\overset{\eqref{omega},\eqref{oiR}}{:=}\omega_{\rh}\bigl(z,i(y_1,y_2]\bigr)-\frac{y_2-y_1}{\pi}\Re\frac{1}{z}
\quad\text{в $z\in \CC_{\overline \rh}\setminus 0$}.
\end{equation}
Для  распределения зарядов  $\nu$ в  \cite[определение 3.1, теорема 1, замечание 3.3]{KhaShmAbd20} определялось 
{\it выметание\/ $\nu_{\CC_{\overline \lh}}^{\bal[1]}$ рода\/ $1$ распределения зарядов\/  $\nu$ из\/  $\CC_{\rh}$ на \/} $ \CC_{\overline \lh}$
  при $0\notin \supp \nu$. Здесь используется  более компактная запись  для такого выметания $\nu^{\bal^1_{\rh}}:=\nu^{\bal[1]}_{ \CC_{\overline \lh}}$.  По определению {распределение зарядов\/} $\nu^{\bal^1_{\rh}}$ --- это 
{\it сумма  сужения\/} $\nu{\lfloor}_{\CC_{\lh}}$ с {\it  распределением зарядов на\/} $i\RR$, определяемым в обозначениях \eqref{nuiR}  {\it функцией распределения} 
\begin{equation}\label{df:nurh}
\nu^{\bal^1_{\rh}}_{i\RR}(y_2)-\nu^{\bal^1_{\rh}}_{i\RR}(y_1)\overset{\eqref{Ocr}}{=}
\int_{\CC_{\rh}} \Omega_{\rh} \bigl(z, i(y_1,y_2]\bigr)\dd \nu (z)
\end{equation}
с нормировкой вида \eqref{nuo} при необходимости. 
\begin{remark}\label{remAA} Выметание распределения зарядов  рода $1$ из $\CC_{\rh}$ на  $ \CC_{\overline \lh}$ --- это часть глобального, или двухстороннего, выметания распределения зарядов  из $\CC\setminus i\RR$ на $i\RR$, рассмотренного  в \cite[\S~3]{Kha91AA} и сыгравшего там одну из ключевых ролей. Двустороннее выметание на мнимую ось можно рассматривать как последовательное выметание рода $1$ распределения зарядов  сначала из $\CC_{\rh}$ на $ \CC_{\overline \lh}$, а затем зеркально симметричной относительно $i\RR$ процедуры выметания рода $1$ получившегося распределения зарядов  из  $\CC_{\lh}$ на  $\CC_{\overline \rh}$. 
\end{remark}

Ограничение $0\notin \supp \nu$ для выметания рода $1$ легко преодолевается, если скомбинировать выметание рода $0$ части $\nu$  около нуля с выметанием рода $1$ для оставшейся части $\nu$. Для этого определяем {\it комбинированное выметание рода\/ $01$ 
распределения зарядов\/ $\nu$ из\/ $\CC_{\rh}$ на\/ $\CC_{\overline \lh}$} \cite[замечание 3.3, (3.43), (4.1)]{KhaShmAbd20}
\begin{equation}\label{bal01}
\nu^{\bal_{\rh}^{01}}:=\bigl(\nu{\lfloor}_{r_0\DD}\bigr)^{\bal_{\rh}^{0}}+\bigl(\nu{\lfloor}_{\CC\setminus r_0\DD}\bigr)^{\bal_{\rh}^{1}}
\end{equation}
при каком-нибудь фиксированном радиусе  $r_0\in \RR^+\setminus 0$. 
\begin{remark}\label{rem0nu}
Круг $r_0\DD$  в правой части \eqref{bal01}  можно заменить на любое ограниченное борелевское множество, содержащее полукруг $r_0\DD\setminus \CC_{\overline \lh}$, или даже на пустое множество.  Другими словами, можно  обойтись совсем без 
выметания рода $0$ и положить $\nu^{\bal_{\rh}^{01}}:=\nu^{\bal_{\rh}^{1}}$
если $0\notin \supp \nu$ или, более общ\'о,
\begin{equation*}
\int_{r_0\DD} \Re^+\frac{1}{z} \dd |\nu|(z)<+\infty, 
\end{equation*}
что, очевидно, выполнено, если для некоторого $r_0\in \RR^+\setminus 0$ имеем 
\begin{equation}\label{nur0+}
|\nu|\bigl(r_0\DD\cap \CC_{ \rh}\bigr)=0.
\end{equation}
\end{remark}
Теперь вопрос существования выметания  $\nu^{\bal_{\rh}^{01}}$  упирается лишь в поведение 
распределения зарядов $\nu$ около бесконечности.

Распределение зарядов $\nu$ принадлежит  {\it классу сходимости при порядке\/} роста  $p\in \NN_0$, если
\cite[определение 4.1]{HK}, \cite[\S~2, 2.1, (2.3)]{KhaShm19} 
\begin{equation}\label{sufc}
\int_1^{+\infty}\frac{|\nu|^{\rad}(t)}{t^{p+1}}\dd t<+\infty. 
\end{equation}
В \cite[3]{SalKha21}  использовались различные виды условий Линделёфа. 
Распределение зарядов $\nu$ удовлетворяет  {\it $\RR$-условию Линделёфа\/}  (рода $1$), если  
\begin{equation}
\sup_{r\geq 1} \biggl| \int_{1<|z|\leq r}\Re\frac{1}{z}\dd \nu(z)\biggr|<+\infty, 
\label{con:LpZR}
\end{equation}
что по определениям \eqref{df:dDlm+}--\eqref{df:dDlm-} эквивалентно соотношению
\begin{equation}
\sup_{r\geq 1} \bigl| \ell^{\rh}_{\nu}(1,r)-\ell^{\lh}_{\nu}(1,r)\bigr|<+\infty, 
\label{con:LpZRl}
\end{equation}
удовлетворяет {\it $i\RR$-условию Линделёфа\/} (рода $1$), если
\begin{equation}
\sup_{r\geq 1}\biggl| \int_{1<|z|\leq r}\Im\frac{1}{z}\dd \nu(z)
\biggr|<+\infty, 
\label{con:LpZiR}
\end{equation}
и удовлетворяет  {\it условию Линделёфа\/} (рода $1$), если
\begin{equation}
\sup_{r\geq 1}\biggl|\int_{1<|z|\leq r}\frac{1}{z}\dd \nu(z)
\biggr|<+\infty. 
\label{con:LpZ}
\end{equation}

Ключевая роль условий Линделёфа отражает следующая классическая  
\begin{theo}[{Вейерштрасса\,--\,Адамара\,--\,Линделёфа\,--\,Брело (\cite[3, Теорема  12]{Arsove53p},  \cite[4.1, 4.2]{HK}, \cite[2.9.3]{Az}, \cite[6.1]{KhaShm19})}]\label{pr:rep}
Если   $u\not\equiv -\infty$ --- субгармоническая функция конечного типа, то её   распределение масс Рисса  $\frac{1}{2\pi}{\bigtriangleup}u$
конечной верхней плотности и удовлетворяет условию Линделёфа \eqref{con:LpZ}. 

Обратно, если  распределение масс    $\nu$ конечной верхней плотности, то существует  субгармоническая функция $u_{\nu}$ с  
$\frac{1}{2\pi}{\bigtriangleup}u_{\nu}=\nu$  порядка $\ord[u_{\nu}]
\leq 1$, которая при выполнении условия Линделёфа \eqref{con:LpZ} для\/  $\nu$   будет уже  функцией  конечного типа. 
При этом  любая  субгармоническая   функция $u$ с  $\frac{1}{2\pi}{\bigtriangleup}u=\nu$
представляется в виде суммы $u=u_{\nu}+H$, где $H$ --- гармоническая функция на\/ $\CC$, которая при условии\/ $\type_2[u]=0$ является  гармоническим многочленом степени\/  $\deg H\leq 1$, а функция $u$ становится функцией порядка\/  $\ord[u]\leq 1$.
\end{theo}

В дальнейшем нам потребуется следующее

\begin{propos}[{\cite[основная теорема]{Kha22Lind}}]\label{theoB1} Пусть $\nu$  ---  распределение зарядов, для которого сужение $\nu{\lfloor}_{\CC_{ \rh}}$ при\-н\-а\-д\-л\-ежит   классу сходимости при порядке $p\overset{\eqref{sufc}}{=}2$.  
Тогда существует выметание $\nu^{\bal^{01}_{\rh}}$ рода $01$ из $\CC_{\rh}$ на $ \CC_{\overline \lh}$. В частности, если $\ord[\nu]<2$, то  $\nu$ из  класса сходимости при порядке $<2$ и   $\ord[\nu^{\bal^{01}_{\rh}}]\leq \ord[\nu]$.  
Если  $\nu$ конечной верхней плотности и 
\begin{equation}\label{cB2l}
\sup_{r\geq 1}\bigl|\ell_{\nu}^{\rh}(1,r)\bigr|\overset{\eqref{df:dDlm+}}{:=}
<+\infty,
\end{equation}
то $\nu^{\bal^{01}_{\rh}}$ ---  распределение зарядов  конечной верхней плотности  c разностью $\nu-\nu^{\bal^{01}_{\rh}}$,  удовлетворяющей условию Линделёфа. При этом  если   носитель $\supp \nu$ не пересекается с замкнутым углом раствора строго больше, чем  $\pi$, содержащим  $i\RR$,  с вершиной в нуле и биссектрисой $\RR$, т.е.
\begin{equation}\label{eA1}
 \bigl\{z\in \CC \bigm| \Re z\leq a |z|\bigr\}  \bigcap \supp \nu=\varnothing 
\quad\text{для  некоторого $a \in (0,1)$},
\end{equation}
то в обозначении \eqref{df:nup} для радиальных считающих функций $|\nu^{\bal^{01}_{\rh}}\bigr|_{iy}^{\rad}$ полной вариации $|\nu^{\bal^{01}_{\rh}}|$ распределения зарядов  $\nu^{\bal^{01}_{\rh}}$ с центрами  $iy\in i\RR$  имеем
\begin{equation}\label{trnuair}
\sup_{y\in \RR}\sup_{t\in (0,1]}\frac{|\nu^{\bal^{01}_{\rh}}\bigr|_{iy}^{\rad}(t)}{t}<+\infty.
\end{equation}

\end{propos}

\subsection{Выметания  разностей  субгармонических  функций}\label{6_2}
Пусть $\mathcal U=u-v$ --- разность субгармонических на $\CC$ функций $u$ и $v$, или $\delta$-суб\-г\-а\-р\-м\-о\-н\-и\-ч\-е\-с\-к\-ая функция, для которой при   $u\not\equiv -\infty$ и $v\not\equiv -\infty$ пишем  ${\mathcal U}\not\equiv \pm\infty$.  Значения такой функции ${\mathcal U}\not\equiv \pm\infty$ определены во всех точках, в которых одна из функций  $u$ или $v$ принимает значение из $\RR$, т.е. вне некоторого полярного множества, а её   {\it распределение зарядов Рисса\/}
\begin{equation}\label{Delta}
\varDelta_{\mathcal U}:=\frac{1}{2\pi}{\bigtriangleup}{\mathcal U}\overset{\eqref{Riesz}}{:=}
\frac{1}{2\pi}{\bigtriangleup}u-\frac{1}{2\pi}{\bigtriangleup}v\overset{\eqref{Riesz}}{=}
\varDelta_u-\varDelta_v
\end{equation} 
--- разность распределений масс Рисса  $u$ и $v$. Следуя  \cite[определение 4.1]{KhaShmAbd20}, 
{\it  $\delta$-субгармоническим выметанием $\delta$-субгармонической функции  ${\mathcal U}\not\equiv \pm\infty$  из\/ $\CC_{\rh}$ на $ \CC_{\overline \lh}$}  называем 
каждую  {\it $\delta$-субгар\-м\-о\-н\-и\-ч\-е\-с\-к\-ую  функцию,\/} обозначаемую как ${\mathcal U}^{{\Bal_{\rh}}}$, которая  равна   функции ${\mathcal U}$ на замкнутой левой полуплоскости\/ $\CC_{\overline \lh}$ вне некоторого полярного множества и одновременно гармоническая  на     $\CC_{\rh}$. 

\begin{propos}[{\rm  \cite[теорема 2]{Kha22bal}
}]\label{Balv}
Пусть   ${\mathcal U}\not\equiv \pm \infty$  --- $\delta$-суб\-г\-а\-р\-м\-о\-н\-и\-ч\-е\-с\-к\-ая  функция c распределением зарядов
 Рисса  \eqref{Delta} конечной верхней плотности представима  в виде разности субгармонических функций порядка $\leq 1$.
 
Тогда существует $\delta$-субгармоническое  выметание\/ $\mathcal U^{\Bal_{\rh}}\not\equiv \pm\infty$ из\/ $\CC_{\rh}$ на $ \CC_{\overline \lh}$
c распределением зарядов Рисса 
\begin{equation}\label{112}
\frac{1}{2\pi}{\bigtriangleup}{\mathcal U}^{\Bal_{\rh}}\overset{\eqref{bal01}}{=}\varDelta_{\mathcal U}^{\bal^{01}_{\rh}}, 
\end{equation}  
представимое вне некоторого полярного множества   в виде разности 
\begin{equation}\label{reprvB}
{\mathcal U}^{\Bal_{\rh}}:=u_+-u_-, \quad u_{\pm}\not\equiv -\infty, \quad \ord[u_{\pm}]\overset{\eqref{senu0:a}}{\leq}  1,
\end{equation} 
 субгармонических функций $u_{\pm}\not\equiv -\infty$. Если при этом функция $\mathcal U$ 
гармоническая в открытом полукруге $r_0\DD\cap \CC_{ \rh}$ при некотором $r_0>0$, то правую часть в \eqref{112} можно заменить на выметание $\varDelta_{\mathcal U}^{\bal^{1}_{\rh}}$ рода $1$.  
\end{propos}

\begin{proof} По \cite[теорема 6]{KhaShmAbd20} для любой $\delta$-субгармонической функции $\mathcal U$ с распределением зарядов Рисса конечного типа  существует выметание ${\mathcal U}^{\Bal_{\rh}}$ с распределением зарядов Рисса  \eqref{112}, представимое в виде 
 \begin{equation}\label{v+u}
{\mathcal U}^{\Bal_{\rh}}=v_+-u_-+H,  \quad v_+\not\equiv -\infty, \quad u_-\not\equiv -\infty,
\end{equation} 
где $v_+$ и $u_-$--- субгармонические функции порядка не выше $1$ , а $H$ --- гармоническая функция на $\CC$.  
При этом если функция $\mathcal U$ представима в виде разности субгармонических функций  порядка не выше $1$, 
то в заключительной части  \cite[теорема 6]{KhaShmAbd20} отмечено, что в качестве  $H$ можно выбрать  гармонический многочлен степени $\deg H\leq 1$. Таким образом, $u_+:=v_++H\not\equiv -\infty$ --- субгармоническая функция порядка не выше $1$ и из 
\eqref{v+u} получаем \eqref{reprvB}. Возможность замены правой части в \eqref{112} на  $\varDelta_{\mathcal U}^{\bal^{1}_{\rh}}$ следует из замечания \ref{rem0nu} в части \eqref{nur0+}. 
\end{proof}


\section{Две  конструкции с  распределениями зарядов и их выметанием, связанные с  логарифмическими функциями интервалов}
\setcounter{equation}{0} 

Если   функция $z \mapsto \Re \dfrac{1}{z}$ суммируема по полной вариации $|\nu|$ распределения зарядов $\nu $ в правой окрестности нуля 
\begin{equation}\label{{klsx0}r}
\int_{\DD\cap \CC_{\rh}}\Re \frac{1}{z}\dd |\nu|(z)<+\infty
\quad \Longleftrightarrow\quad  
\lim_{0<r\to 0} \ell_{|\nu|}^{\rh} (r,1) <+\infty,
\end{equation}
или функция $z \mapsto \Re \dfrac{-1}{z}$ суммируема по  $|\nu|$  в левой  окрестности нуля
\begin{equation}\label{{klsx0}l}
\int_{\DD\cap \CC_{\lh}}\Re \frac{-1}{z}\dd |\nu|(z)<+\infty
\quad \Longleftrightarrow\quad  
\lim_{0<r\to 0} \ell_{|\nu|}^{\lh} (r,1) <+\infty, 
\end{equation}
то  по непрерывности при $0<r\to 0$ определены и вводившиеся   ранее в \cite[(2.2)]{MR}, \cite[гл.~22, Определение]{RC} лишь для  положительных распределений точек   соответственно \textit{правый\/} и {\it левый характеристические логарифмы заряда\/} $\nu$
\begin{align}
\ell_{\nu}^{\rh}(R)&:=\ell_{\nu}^{\rh}(0,R)\overset{\eqref{{klsx0}r}}{:=}
\lim_{0<r\to 0}\ell_{\nu}^{\rh}(r,R),
\label{{df:dDlL1}r}
\\ 
\ell_{\nu}^{\lh}(R)&:=\ell_{\nu}^{\lh}(0,R)
\overset{\eqref{{klsx0}l}}{:=}\lim_{0<r\to 0}\ell_{\nu}^{\lh}(r,R), 
\notag
\end{align}
а для {\it меры\/} $\mu$ --- двусторонний {\it характеристический логарифм меры\/} $\mu$
\begin{equation*}
\ell_{\mu}(R):=\ell_{\mu}(0,R):=\lim_{0<r\to 0}\ell_{\mu}(r,R).
\end{equation*}

Близкие версии следующего предложения для распределений точек содержатся в статьях  
\cite[лемма 3.1]{MR}, \cite[лемма 22.2]{RC}, \cite[лемма 1.1.]{Kha89},  \cite[лемма 1]{Kha91AA}. 
\begin{propos}\label{addsec} Для  любого  распределения зарядов    $\eta$ на $\CC$ при\/  $0\notin \supp \eta$ 
можно построить  распределение масс $\alpha$ с $\supp \alpha\subset \RR^+\setminus 0$, для которого
\begin{equation}\label{|l|}
\sup_{0\leq r<R<+\infty} \bigl|\ell_{\eta+\alpha}^{\rh}(r,R)\bigr|\leq 2
\sup_{0\leq  r<R<+\infty} \ell_{\eta}^{\rh}(r,R),
\end{equation}
где в случае   распределения зарядов $\eta$ конечной верхней плотности 
построенное  распределение масс $\alpha$ будет также  конечной верхней плотности. 
\end{propos}
\begin{proof} 
Возрастающая по построению функция 
\begin{equation}\label{ainf}
a\colon t \overset{\eqref{df:dDlm+}}{\underset{t\in \RR^+ }{\longmapsto}}-\sup_{s\geq t} \ell_{\eta}^{\rh}(s)
=\inf_{s\geq t} \bigl(-\ell_{\eta}^{\rh}(s)\bigr),
\end{equation}
 однозначно определяет  распределение масс  $\alpha$ 
через её  возрастающую функцию распределения \eqref{nuR}, построенную как  
\begin{equation}\label{{adist}r}
\alpha_{\RR}\colon x\underset{x\in \RR^+ }{\longmapsto} \int_0^x t \dd a(t)
=xa(x)-\int_0^xa(t)\dd t.
\end{equation}
Ввиду $ 0\notin \supp \eta$ из построения \eqref{ainf} функция $a$ постоянна на некотором интервале $[0,r_0)$, а из \eqref{{adist}r} из функция распределения $\alpha_{\RR}\equiv 0$ на $[0,r_0)$  и $\supp \alpha\subset \RR^+\setminus 0$.  По построению 
\eqref{ainf}  также имеем 
\begin{align}
a(t)&\overset{\eqref{ainf}}{=}\inf_{s\geq t}\bigl( -\ell_{\eta}^{\rh}(s)\bigr)\leq 
 -\ell_{\eta}^{\rh}(t),
\quad\text{при всех $t\in \RR^+$, откуда $a(0)\leq 0$},  
\label{aup}\\
 a(t)&\overset{\eqref{ainf}}{=}  -\sup_{s\geq t} \ell_{\eta}^{\rh}(s)
\geq  -\sup_{0\leq  r<R<+\infty} \ell_{\eta}^{\rh}(r,R)
\quad\text{при всех $t\in \RR^+$.}
\label{a}
\end{align}
Кроме того,  по определению \eqref{ainf} при всех $t\in \RR^+$ имеем 
\begin{equation*}
\ell_{\alpha}^{\rh} (t)\overset{\eqref{{df:dDlL1}r}}{=}
\int_0^t\frac{1}{x}\dd \alpha^{\RR}(x)\overset{\eqref{{adist}r}}{:=}\int_0^t  \dd a(t)
=a(t)-a(0),
\end{equation*}
откуда для любых $t\in \RR^+$ получаем 
\begin{equation}\label{nua}
\ell_{\eta+\alpha}^{\rh}(t)
=\ell_{\eta}^{\rh}(t)+a(t)-a(0)\overset{\eqref{aup}}{\leq}-a(0)
\overset{\eqref{a}}{\leq}\sup_{0\leq  r<R<+\infty} \ell_{\eta}^{\rh}(r,R), 
\end{equation}
а также
\begin{multline*}
\ell_{\eta+\alpha}^{\rh}(t) \overset{\eqref{nua}}{=} \ell_{\eta}^{\rh}(t)+a(t)-a(0) \overset{\eqref{ainf}}{=}
\inf_{s\geq t}\bigl(\ell_{\eta}^{\rh}(t) -\ell_{\eta}^{\rh}(s)\bigr)-a(0)\\
=-\sup_{s\geq t}\bigl(\ell_{\eta}^{\rh}(s)-\ell_{\eta}^{\rh}(t) \bigl)-a(0)
\overset{\eqref{{df:dDlL1}r}}{=}-\sup_{s\geq t}\bigl(\ell_{\eta}^{\rh}(s,t) \bigl)-a(0)
\\
\overset{\eqref{aup}}{\geq}    -\sup_{0\leq r<R<+\infty} \ell_{\eta}^{\rh}(r,R)
\quad\text{для любых $t\in \RR^+$}.
\end{multline*}
Отсюда и из \eqref{nua} сразу следует
\begin{equation*}
\sup_{t\geq 0} \bigl|\ell_{\eta+\alpha}^{\rh}(t)\bigr|\leq  \sup_{0\leq r<R<+\infty} \ell_{\eta}^{\rh}(r,R).
\end{equation*}
что влечёт за собой \eqref{|l|}, так как 
\begin{equation*}
\sup_{0<r<R<+\infty} \bigl|\ell_{\eta+\alpha}^{\rh}(r,R)\bigr|\leq 
\sup_{R\in \RR^+} \bigl|\ell_{\eta+\alpha}^{\rh}(R)\bigr|+\sup_{r\in \RR^+} \bigl|\ell_{\eta+\alpha}^{\rh}(r)\bigr|\leq 
2 \sup_{0\leq r<R<+\infty} \ell_{\eta}^{\rh}(r,R).
\end{equation*}
Если $\eta$ --- распределение зарядов  конечной верхней плотности, то 
\begin{equation}\label{l2}
\bigl|\ell_{\eta}^{\rh}(r,2r)\bigr|\leq \int_r^{2r}\frac{1}{t}\dd |\eta|^{\rad}(t)\leq
\frac{1}{r}|\eta|^{\rad}(2r)\underset{r\to +\infty}{=}O(1). 
\end{equation}
Отсюда и из \eqref{|l|} получаем 
\begin{multline*}
\frac{1}{2r}\bigl(\alpha^{\rad}(2r)-\alpha^{\rad}(r)\bigr)
\leq \int_r^{2r} \frac{1}{t}\dd \alpha^{\rad}(t)
=\ell_{\alpha}^{\rh}(r,2r) \\
\leq \bigl|\ell_{\eta+\alpha}^{\rh}(r,2r)\bigr|+\bigl|\ell_{\eta}^{\rh}(r,2r)\bigr|
\overset{\eqref{|l|}}{\leq} 
2\sup_{0\leq r<R<+\infty} \ell_{\eta}^{\rh}(r,R)
+\bigl|\ell_{\eta}^{\rh}(r,2r)\bigr|
\underset{r\to +\infty}{\overset{\eqref{l2}}{=}}O(1). 
\end{multline*}
Это  означает, что  распределение масс $\alpha$ конечной верхней плотности.
\end{proof}

\begin{propos}\label{pr52} Пусть  $ a \in (0,1)$ и носители распределений  масс  $\nu$
и $\mu$ конечного типа    содержаться в угле   $\bigl\{z\in \CC\bigm|  \Re z >  a |z|\bigr\}$, а также  
\begin{equation}\label{|l|+}
\sup_{0\leq r<R<+\infty} \ell_{\nu-\mu}^{\rh}(r,R) <+\infty . 
\end{equation}
Тогда найдутся  распределение масс $\alpha$ конечной верхней плотности  
с носителем на $\RR^+\setminus 0$, для которого существует выметание 
$(\nu+\alpha-\mu)^{\bal^1_{\rh}}$ 
рода $1$ из $\CC_{\rh}$
на $ \CC_{\overline \lh}$ конечной верхней плотности с носителем на $i\RR$,
а также распределение масс $\beta$ с носителем на $i\RR$ и число $c\in \RR^+$, для которых 
\begin{equation}\label{bm_1}
(\nu+\alpha+\beta -\mu)^{\bal^1_{\rh}}=
(\nu+\alpha -\mu)^{\bal^1_{\rh}}+\beta =c\mathfrak m_1{\lfloor}_{i\RR}
\end{equation}
---  линейная мера Лебега на мнимой оси, домноженная на $c$,
 а распределение зарядов  $\nu+\alpha+\beta-\mu$ удовлетворяет условию Линделёфа.   
 \end{propos}
\begin{proof} Для распределения зарядов $\eta:=\nu-\mu$ при условии \eqref{|l|+}  
по предложению  \ref{addsec} существует  распределение масс $\alpha$   конечной верхней плотности 
 с носителем на $\supp \alpha \subset  \RR^+\setminus 0$, для которых 
\begin{equation*}
\sup_{0\leq r<R<+\infty} \bigl|\ell_{\nu+\alpha-\mu}^{\rh}(r,R)\bigr|
=\sup_{0\leq r<R<+\infty} \bigl|\ell_{\eta +\alpha}^{\rh}(r,R)\bigr|
\overset{\eqref{|l|}}{<}+\infty.
\end{equation*}
Вместе с расположением  носителей $\nu$ и $\mu$ в  $\bigl\{z\in \CC\bigm| \Re z>  a |z|\bigr\}$
это означает выполнение условий \eqref{cB2l} и \eqref{eA1}  предложения \ref{theoB1} 
для  $\nu+\alpha-\mu$ в роли $\nu$. По предложению \ref{theoB1} существует выметание $(\nu+\alpha-\mu)^{\bal^1_{\rh}}$ конечной верхней плотности, сосредоточенное в данном случае исключительно на  $i\RR$, 
разность  $(\nu+\alpha-\mu)-(\nu+\alpha-\mu)^{\bal^1_{\rh}}$ удовлетворяет условию Линделёфа и 
\begin{equation}\label{trnuair+}
\sup_{y\in \RR}\sup_{t\in (0,1]}\frac{\bigl|(\nu+\alpha-\mu)^{\bal^1_{\rh}}\bigr|_{iy}(t)}{t}\overset{\eqref{trnuair}}{<}+\infty.
\end{equation}
Положим 
\begin{equation}\label{varth}
(\nu +\alpha -\mu)^{\bal^1_{\rh}}=: \vartheta=\vartheta^+-\vartheta^- ,
\end{equation}  
где  $\vartheta^+$ и $\vartheta^-$ --- это верхняя и нижняя вариации распределения зарядов $\vartheta$. По построению   распределения масс $\vartheta^\pm$ конечной верхней плотности с носителями на $i\RR$ и ввиду 
\eqref{trnuair+} при некотором $c\in \RR^+$ удовлетворяют ограничениям  
\begin{equation}\label{trnuair+-}
\vartheta^{\pm}_{iy}(t)\leq 2ct \quad\text{при всех $y\in \RR$ и $t\in (0,1]$.}
\end{equation}
Отсюда для линейной меры Лебега $\mathfrak m_1{\lfloor}_{i\RR}$ на $i\RR$
разность $c\mathfrak m_1{\lfloor}_{i\RR}-\vartheta^+$ --- это распределение {\it масс\/} конечной верхней плотности с носителем на $i\RR$, а из  \eqref{varth} при этом получаем 
\begin{equation}\label{vt}
(\nu +\alpha -\mu)^{\Bal}+\underset{\beta}{\underbrace{\vartheta^-+
(c\mathfrak m_1{\lfloor}_{i\RR}-\vartheta^+)}}
\overset{\eqref{varth}}{=}\vartheta^++c\mathfrak m_1{\lfloor}_{i\RR}-\vartheta^+=c\mathfrak m_1{\lfloor}_{i\RR}.
\end{equation}
При  таком  выборе распределения {\it масс\/} $\beta:=\vartheta^-+
(c\mathfrak m_1{\lfloor}_{i\RR}-\vartheta^+)$ конечной верхней плотности равенство \eqref{vt} означает, что  выполнено  
второе равенство в  \eqref{bm_1}, а поскольку  $\supp \beta\subset  i\RR$, то по определению \eqref{df:nurh} выметания рода $1$
из $\CC_{ \rh}$ имеем $\beta^{\bal^1_{\rh}}=\beta$, что влечёт за собой  и первое равенство в \eqref{bm_1}. 
Наконец, как отмечалось выше перед \eqref{trnuair+},  $(\nu+\alpha-\mu)-(\nu+\alpha-\mu)^{\bal^1_{\rh}}$
удовлетворяет условию Линделёфа,   а из явного вида распределения масс $c\mathfrak m_1|_{i\RR}$ правой части  в \eqref{bm_1}, очевидно, удовлетворяющего условию Линделёфа, и из второго равенства в \eqref{bm_1} также следует, что и 
распределение зарядов   $(\nu+\alpha -\mu)^{\bal^1_{\rh}}+\beta $ удовлетворяет условию Линделёфа. Следовательно, и их сумма, равная  $(\nu+\alpha-\mu)+\beta=\nu+\alpha+\beta -\mu$,  удовлетворяет условию Линделёфа. 
\end{proof}

\section{Выметание на вертикальную полосу}

\subsection{Сдвиги и двустороннее  выметание распределения зарядов}\label{5_1Ss}
{\it Зе\-ркальная симметрия $z\underset{z\in \CC}{\longmapsto} -\Bar z$ относительно мнимой оси\/} позволяет все результаты о выметании рода  $1$ из $\CC_{ \rh}$ на $ \CC_{\overline \lh}$ переформулировать для выметания 
из левой полуплоскости $ \CC_{\lh}$ на $\CC_{\overline \rh}$ с заменой, где необходимо, правой логарифмической  функции интервалов \eqref{df:dDlm+} на  левую \eqref{df:dDlm-},  а также с переобозначением  верхнего индекса $^{\bal^1_{\rh}}$  как  $^{\bal^1_{\lh}}$ при выметании из левой полуплоскости $\CC_{ \lh}$.

Для распределения зарядов $\nu$ и точки $w\in \CC$ через 
$\nu_{\vec{w}}$ обозначаем {\it $w$-сдвиг  распределения зарядов\/ $\nu$,} определяемый 
как 
\begin{equation}\label{vecz}
\nu_{\vec{w}}(K):=\nu(K-w) \quad\text{на компактах $K\subset \CC$.}
\end{equation}

\begin{propos}\label{prb2} Пусть $\nu$ --- распределение зарядов конечной верхней пло\-т\-н\-о\-с\-ти. Тогда  
для любых $w\in \CC$ и   $r_0\in \RR^+\setminus 0$  имеем 
\begin{equation}\label{K1}
\sup_{r\geq r_0} \bigl|\ell^{\rh}_{\nu-\nu_{\vec{w}}}(r_0, r)\bigr|
+\sup_{r\geq r_0} \bigl|\ell^{\lh}_{\nu-\nu_{\vec{w}}}(r_0, r)\bigr|<+\infty,
\end{equation}
a  $\nu$ и $\nu_{\vec{w}}$ могут удовлетворять какому-либо одному из трёх видов 
условий Линделёфа \eqref{con:LpZR}--\eqref{con:LpZRl}, \eqref{con:LpZiR} или \eqref{con:LpZ}
только одновременно.    
\end{propos}
Доказательство предложения \ref{prb2}, легко следующее   из определений $\ell^{\rh}$ и $\ell^{\lh}$ в \eqref{df:dDlm+}--\eqref{df:dDlm-} и условий Линделёфа \eqref{con:LpZR}--\eqref{con:LpZ}, опускаем. 
 
{\it Выметание\/} рода $01$ распределения зарядов $\nu$  {\it на  замкнутую вертикальную полосу\/}  $\overline \strip_b$ ширины $2b\geq 0$ из \eqref{{strip}c} опишем  в пять шагов [b\ref{i1}]--[b\ref{i5}], применяя каждый шаг к распределению зарядов, полученному на предыдущем шаге:
\begin{enumerate}[{[b1]}]
\item\label{i1} $(-b)$-сдвиг $\nu_{\vec{-b}}$ распределения зарядов $\nu$;
\item\label{i2} выметание $\nu_{\vec{-b}}^{\bal^{01}_{\rh}}$  рода  $01$ из правой полуплоскости  $\CC_{ \rh}$ на $ \CC_{\overline \lh}$;
\item\label{i3} $2b$-сдвиг $\Bigl(\nu_{\vec{-b}}^{\bal^{01}_{\rh}}\Bigr)_{\vec{2b}}$
 распределения зарядов $\nu_{\vec{-b}}^{\bal^{01}_{\rh}}$;
\item\label{i4} выметание $\Bigl(\nu_{\vec{-b}}^{\bal^{01}_{\rh}}\Bigr)_{\vec{2b}}^{{\bal^{01}_{\lh}}}$
рода $01$ из левой полуплоскости $ \CC_{\lh}$ на $\CC_{\overline \rh}$;
\item\label{i5} $(-b)$-сдвиг $\biggl(\Bigl(\nu_{\vec{-b}}^{\bal^{01}_{\rh}}\Bigr)_{\vec{2b}}^{{\bal^{01}_{\lh}}}\biggr)_{\vec{-b}}$ распределения зарядов $\Bigl(\nu_{\vec{-b}}^{\bal^{01}_{\rh}}\Bigr)_{\vec{2b}}^{{\bal^{01}_{\lh}}}$. 
\end{enumerate} 
Полученное  в [b\ref{i5}]  распределение зарядов для краткости обозначаем как  
\begin{equation}\label{Balb1}
\nu^{\Bal^{01}_b}:=\biggl(\Bigl(\nu_{\vec{-b}}^{\bal^{01}_{\rh}}\Bigr)_{\vec{2b}}^{{\bal^{01}_{\lh}}}\biggr)_{\vec{-b}}
\end{equation}
и называем {\it выметанием рода\/ $01$ на\/ $\overline \strip_b$ распределения зарядов\/} $\nu$, если  шаги  
[b\ref{i2}] и [b\ref{i4}] реализуемы.  Для последнего  по предложению \ref{theoB1} достаточно, чтобы распределение зарядов $\nu$ было 
из класса сходимости при порядке $p\overset{\eqref{sufc}}{=}2$. 

\begin{remark}\label{remB01}
По замечанию \ref{rem0nu} при $\pm b\notin \supp \nu$
или, более общ\'о,  при 
\begin{equation}\label{Reznub}
\int_{b+r_0\DD} \Re^+\frac{1}{z-b} \dd |\nu|(z)+ 
\int_{-b+r_0\DD} \Re^-\frac{1}{z+b} \dd |\nu|(z)
<+\infty  
\end{equation}
для некоторого $r_0>0$ можно  в [b\ref{i2}] и [b\ref{i4}], а в итоге и в  \eqref{Balb1}  обойтись выметанием рода $1$, результат чего  в  \eqref{Balb1} будем обозначать   через $\nu^{\Bal^1_b}$. В частности, \eqref{Reznub} выполнено, если 
для некоторого числа $r_0>0$ имеем равенство
\begin{equation}\label{nur0+b}
|\nu|\bigl(b+r_0\DD\cap \CC_{ \rh}\bigr)+|\nu|\bigl(-b+r_0\DD\cap \CC_{ \lh}\bigr)=0.
\end{equation}

\end{remark}

\begin{propos}\label{pr52s} 
Пусть распределение масс   $\mu$ конечной верхней пло\-т\-н\-о\-с\-ти удовлетворяет условию Линделёфа и для распределения масс $\nu$ имеем
\begin{gather}
\sup_{1\leq r<R<+\infty} \Bigl(\ell_{\nu}(r,R)-\ell_{\mu}(r,R)\Bigr)<+\infty,
\label{lJMmull}
\\
\supp (\nu+\mu) \subset \CC\setminus \Bigl(\overline{\mathrm X}_{ a}\bigcup \overline\strip_b\Bigr) 
\quad\text{при  некоторых $b\in \RR^+$ и  $ a \in (0,1)$}.
\label{bst}
\end{gather}
 Тогда найдутся  распределение масс $\alpha$ конечной верхней плотности  
с носителем на $\RR\setminus [-b,b]$, для которого существует выметание 
$(\nu+\alpha-\mu)^{\Bal_b^1}$ 
рода $1$  на $\overline \strip_b$
конечной верхней плотности с носителем на паре  вертикальных прямых   $\pm b+i\RR$,
проходящих через точки $\pm b\in \RR$,
а также пара распределений масс $\beta_{\pm}$ с носителями  на $\pm b+i\RR$
 и число $c\in \RR^+$, для которых 
\begin{equation}\label{bm_1b}
(\nu+\alpha+\beta_++\beta_- -\mu)^{\Bal^1_b} =
(\nu+\alpha -\mu)^{\Bal^1_{b}}+\beta_++\beta_- =c\mathfrak m_1{\lfloor}_{b+i\RR}
+c\mathfrak m_1{\lfloor}_{-b+i\RR}
\end{equation}
---  пара линейных  мер Лебега на прямых $\pm b+i\RR$, домноженных на $c$, 
а распределение масс $\nu+\alpha+\beta_++\beta_-$ удовлетворяет условию Линделёфа.
 \end{propos}
\begin{proof} 
Убедимся,  что $\nu$ конечной верхней плотности.  Действительно, 
из соотношения \eqref{lJMmull} для некоторого   $C\in \RR$ ввиду  $\type[\mu]<+\infty$ имеем
\begin{multline*}
\ell_{\nu}(r,2r)\leq \ell_{\mu}(r,2r)+C\leq 
\int_r^{2r} \frac{1}{|z|}\dd \mu(z)\\
\leq \frac{1}{r}\bigl(\mu^{\rad}(2r)-\mu^{\rad}(2r)\bigr)\leq
2\type[\mu]+1+C
\end{multline*} 
при достаточно больших $r\in \RR^+$, а также  ввиду \eqref{bst} имеем оценку снизу для 
\begin{multline}\label{ellsn}
\ell_{\nu}(r,2r) \overset{\eqref{df:dDlLm}}{\geq}\ell_\nu^{\rh}(r, 2r)
\overset{\eqref{df:dDlm+}}{=} \int_{r}^{2r}\frac{\Re z}{|z|^2} \dd \nu{\lfloor}_{\CC_{\overline \rh}}(z)
\\
\overset{\eqref{bst}}{\geq} \int_{r}^{2r}\frac{a|z|}{|z|^2} \dd \nu{\lfloor}_{\CC_{\overline\rh}}(z)
\geq \frac{a}{2r}\bigl(\nu{\lfloor}_{\CC_{\overline\rh}}^{\rad}(2r)-\nu{\lfloor}_{\CC_{\overline\rh}}^{\rad}(r)\bigr),
\end{multline}
откуда согласно предыдущей оценке сверху  получаем
\begin{equation*}
\nu{\lfloor}_{\CC_{\overline\rh}}^{\rad}(2r)-\nu{\lfloor}_{\CC_{\overline\rh}}^{\rad}(r)
\leq  \bigl(2\type[\mu]+1+C\bigr) \frac{2}{a}r
\end{equation*}
при достаточно больших $r\in\RR^+$. Отсюда следует, что распределение масс 
$\nu{\lfloor}_{\CC_{\overline\rh}}$ конечной верхней плотности. 
Аналогично устанавливается,  что  $\nu{\lfloor}_{\CC_{\overline\lh}}$ конечной верхней плотности, 
откуда и  $\nu$ конечной верхней плотности.

Поскольку по условию распределение  масс $\mu$ удовлетворяет   условию Линделёфа \eqref{con:LpZ}, то  $\mu$ удовлетворяет 
{\it $\RR$-условию Линделёфа\/} \eqref{con:LpZR}--\eqref{con:LpZRl} и 
\begin{multline*}
\sup_{1\leq r<R<+\infty} \bigl| \ell^{\rh}_{\mu}(r,R)-\ell^{\lh}_{\mu}(r,R)\bigr|
\overset{\eqref{con:LpZR}}{=}\sup_{1\leq r<R<+\infty} \biggl| \int_{r<|z|\leq R}\Re\frac{1}{z}\dd \mu(z)\biggr|\\
\leq
\sup_{r\geq 1} \bigl| \ell^{\rh}_{\mu}(1,r)-\ell^{\lh}_{\mu}(1,r)\bigr|+
\sup_{R\geq 1} \bigl| \ell^{\rh}_{\mu}(1,R)-\ell^{\lh}_{\mu}(1,R)\bigr|\overset{\eqref{con:LpZRl}}{<}+\infty, 
\end{multline*} 
откуда по определению \eqref{df:dDlLm} логарифмической субмеры интервалов для $\mu$
\begin{equation}\label{lmu+}
\sup_{1\leq r<R<+\infty}  \bigl| \ell_{\mu}(r,R)-\ell^{\rh}_{\mu}(r,R)\bigr|+
\sup_{1\leq r<R<+\infty}  \bigl|\ell_{\mu}(r,R) -\ell^{\lh}_{\mu}(r,R)\bigr|
<+\infty. 
\end{equation} 
Из условия \eqref{lJMmull} вновь по определению  \eqref{df:dDlLm} логарифмической субмеры интервалов, но уже  для $\nu$, 
получаем соответственно
\begin{multline}\label{{l11}r}
\sup_{1\leq r<R<+\infty} \ell^{\rh}_{\nu-\mu}(r,R)
\overset{\eqref{df:dDlLm}}{\leq} \sup_{1\leq r<R<+\infty} \bigl(\ell_{\nu}(r,R)-\ell^{\rh}_{\mu}(r,R)\bigr)\\
\leq \sup_{1\leq r<R<+\infty} \Bigl(\bigl(\ell_{\nu}(r,R)-\ell_{\mu}(r,R)\bigr)
+  \bigl| \ell_{\mu}(r,R)-\ell^{\rh}_{\mu}(r,R)\bigr|\Bigr)
\overset{\eqref{lJMmull}}{<}+\infty,
\end{multline}
где в конце использовано и \eqref{lmu+}. Аналогично, из \eqref{lJMmull} и \eqref{lmu+} следует 
\begin{equation}\label{{l11}l}
\sup_{1\leq r<R<+\infty} \ell^{\lh}_{\nu-\mu}(r,R)<+\infty. 
\end{equation}
В силу включения \eqref{bst} имеем $\pm b\notin \supp (\nu+\mu)$ и по замечанию \ref{remB01} при реализации шагов [b\ref{i2}], [b\ref{i4}] можно  будет обойтись выметанием рода $1$. 

Используем [b\ref{i1}]--[b\ref{i2}], где на шаге  [b\ref{i2}]  применяем 
предложение \ref{pr52} к распределениям масс  $\nu_{\vec{-b}}\!{\lfloor}_{\CC_{ \rh}}$ и $\mu_{\vec{-b}}\!{\lfloor}_{\CC_{ \rh}}$ в роли $\nu$ и $\mu$, для которых условия включения носителей в  $\bigl\{z\in \CC\bigm|  \Re z >  a |z|\bigr\}$ и \eqref{|l|+}
обеспечены включением \eqref{bst}, а выполнение условия \eqref{|l|+} ---  соотношением \eqref{{l11}r}. Таким путём [b\ref{i2}] возникают распределение масс 
$\alpha^{\rh}$ конечной верхней плотности с носителем на $\RR^+\setminus [0,r_0)$ при некотором $r_0>0$,
а также  распределение масс $\beta^{\rh}$ с носителем на $i\RR$ и число $c^{\rh}\in \RR^+$, для которых 
\begin{gather}
\bigl(\nu_{\vec{-b}}
{\lfloor}_{\CC_{ \rh}}+\alpha^{\rh} -\mu_{\vec{-b}}
{\lfloor}_{\CC_{ \rh}}\bigr)^{\bal^1_{\rh}}
+\beta^{\rh} \overset{\eqref{bm_1}}{=}c^{\rh}\mathfrak m_1{\lfloor}_{i\RR},
\label{bm_1+}\\
\nu_{\vec{-b}}{\lfloor}_{\CC_{ \rh}}+\alpha^{\rh} +\beta^{\rh}
-\mu_{\vec{-b}}{\lfloor}_{\CC_{ \rh}}\quad\text{удовлетворяет условию Линделёфа}.
\label{uuL+}
\end{gather}

После этого используем  [b\ref{i3}]--[b\ref{i4}], где на шаге  [b\ref{i4}]  снова применяем 
предложение \ref{pr52} в зеркально симметричной относительно $i\RR$ форме 
к распределениями масс  $\nu_{\vec{b}}\!{\lfloor}_{\CC_{ \lh}}$ и $\mu_{\vec{b}}\!{\lfloor}_{\CC_{ \lh}}$ в  роли $\nu$ и $\mu$, для которых 
условия включения их носителей в угол $\bigl\{z\in \CC\bigm|  \Re z <-  a |z|\bigr\}$ и \eqref{|l|+} с $\ell^{\lh}$ вместо $\ell^{\rh}$ 
обеспечены включением \eqref{bst} и соотношением \eqref{{l11}l}. 
Таким путём возникают распределение масс 
$\alpha^{\lh}$ конечной верхней плотности с  $\supp \alpha^{\lh}\subset -\RR^+\setminus [0,r_0)$ при некотором $r_0>0$,
а также  распределение масс $\beta^{\lh}$ с  $\supp \beta^{\lh}\subset i\RR$ и число $c^{\lh}\in \RR^+$, для которых 
\begin{gather}
\bigl(\nu_{\vec{b}}{\lfloor}_{\CC_{\lh}}+\alpha^{\lh} -\mu_{\vec{b}}{\lfloor}_{\CC_{\lh}}\bigr)^{\bal^1_{\lh}}
+\beta^{\lh} \overset{\eqref{bm_1}}{=}c^{\lh}\mathfrak m_1{\lfloor}_{i\RR},
\label{bm_1-}\\
\nu_{\vec{b}}{\lfloor}_{\CC_{\lh}}+\alpha^{\lh} +\beta^{\lh}
-\mu_{\vec{b}}{\lfloor}_{\CC_{\lh}}\quad\text{удовлетворяет условию Линделёфа}.
\label{uuL-}
\end{gather}
При этом если $c^{\rh}\geq c^{\lh}$, то можем добавить распределение масс $(c^{\rh}-c^{\lh})\mathfrak m_1{\lfloor}_{i\RR}$, очевидно, конечной верхней плотности, удовлетворяющее условию Линделёфа, к правой части \eqref{bm_1-} и к $\beta^{\lh}$, сохраняя прежнее обозначение $\beta^{\lh}$ для суммы $\beta^{\lh}+(c^{\rh}-c^{\lh})\mathfrak m_1{\lfloor}_{i\RR}$. Тогда, очевидно, 
\eqref{uuL-} сохраняется, а в правых частях \eqref{bm_1+} и \eqref{bm_1-} окажется $c^{\lh}=c^{\rh}$. Аналогично поступаем при $c^{\rh}< c^{\lh}$ по отношению к \eqref{bm_1+}--\eqref{uuL+} и получим $c^{\rh}=c^{\lh}$, что всегда позволяет выбрать 
\begin{equation}\label{crl}
c:=c^{\lh}=c^{\rh}.
\end{equation}

На шаге [b\ref{i5}]  получаем требуемое распределение масс $\alpha:=\alpha^{\rh}_{\vec{b}}+\alpha^{\lh}_{\vec{-b}}$ конечной верхней плотности с носителем на $\RR\setminus [-b,b]$, 
 распределения масс $\beta_+:=\beta_{\vec{b}}^{\rh}$ и  $\beta_-:=\beta_{\vec{-b}}^{\lh}$ конечной верхней плотности с  носителями 
соответственно на прямых $b+i\RR$ и $-b+i\RR$. При этом выметания на шагах [b\ref{i2}] и [b\ref{i4}] дают выметание  
\begin{equation}\label{numuab}
(\nu+\alpha -\mu)^{\Bal^1_{b}}+\beta_++\beta_-=(\nu+\alpha+\beta_++\beta_- -\mu)^{\Bal^1_b}
\end{equation}
конечного порядка из \eqref{Balb1} с носителем на паре прямых $\pm b+i\RR$, где равенство в \eqref{numuab} следует из 
равенства $(\beta_++\beta_-)^{\Bal^1_{b}}=\beta_++\beta_-$  для распределений с носителем на $\overline \strip_b$. Равенство 
\eqref{numuab} вместе с  \eqref{bm_1+}, \eqref{bm_1-} и \eqref{crl} даёт  \eqref{bm_1b}. 
Из свойств \eqref{uuL+} и \eqref{uuL-} следует, что распределение зарядов  $\nu+\alpha+\beta_++\beta_--\mu$ удовлетворяет условию Линделёфа, а сложение этого распределения зарядов с распределением масс $\mu$, удовлетворяющим условию Линделёфа,  даёт распределение масс
$\nu+\alpha+\beta_++\beta_-$, также удовлетворяющее условию Линделёфа. 
\end{proof}

\subsection{Сдвиги и выметание $\delta$-субгармонической функции}
Для точек $w\in \CC$ аналогично $w$-сдвигу \eqref{vecz} распределений зарядов определяем {\it $w$-сдвиг $u_{\vec{w}}$ 
функции\/} $u$ на $\CC$, задаваемый как 
\begin{equation}\label{veczu}
u_{\vec{w}}\colon z\underset{z\in \CC}{\longmapsto} u(z-w) .
\end{equation}
При $w$-сдвиге  \eqref{veczu}  $\delta$-субгармонической на $\CC$ функции $\mathcal U\not\equiv \pm \infty$ она остаётся такой же,  
а распределение её зарядов Рисса претерпевает $w$-сдвиг
\begin{equation}\label{zsh}
\frac{1}{2\pi}{\bigtriangleup}(\mathcal U_{\vec{w}})\overset{\eqref{vecz}}{=}
\Bigl(\frac{1}{2\pi}{\bigtriangleup}\mathcal U\Bigr)_{\vec{w}}.
\end{equation}  
Для $\delta$-субгармонической функции  $\mathcal U\not\equiv \pm\infty$ и $b\in \RR^+$ каждую  {\it $\delta$-субгар\-м\-о\-н\-и\-ч\-е\-с\-к\-ую  функцию\/} $\mathcal U^{\Bal_b}$,  равную  функции $\mathcal U$ на  вертикальной полосе $\overline \strip_b$ ширины $2b$ из \eqref{{strip}c} вне некоторого полярного множества   и
в то же время    гармоническую  на    $\CC\setminus \overline \strip_b$, называем 
{\it  $\delta$-субгармоническим выметанием на $\overline \strip_b$ функции\/  $\mathcal U$}.

\begin{propos}[{\cite[теорема 3]{Kha22bal}}]\label{Balvs}
 Пусть $b\in \RR^+$ и  $\delta$-суб\-г\-а\-р\-м\-о\-н\-и\-ч\-е\-с\-к\-ая  функция $\mathcal U\not\equiv \pm \infty$  c распределением зарядов
 Рисса  \eqref{Delta} ко\-н\-е\-ч\-н\-ой верхней плотности 
представима  в виде разности субгармонических функций не более чем первого порядка.
 Тогда существует $\delta$-субгармоническое  выметание\/ $\mathcal U^{\Bal_b}$ на $\overline \strip_b$
функции $\mathcal U$ c распределением зарядов Рисса 
\begin{equation}\label{112s}
\frac{1}{2\pi}{\bigtriangleup}(\mathcal U^{\Bal_b})\overset{\eqref{Balb1}}{=}\varDelta_\mathcal U^{\Bal^{01}_b},
\end{equation}  
представимое вне некоторого полярного множества  в виде разности 
\eqref{reprvB} субгармонических функций $u_{\pm}\not\equiv -\infty$ не более чем первого порядка. 
\end{propos}

\begin{remark}\label{rem010}
Если в условиях предложения  \ref{Balvs}
 функция $\mathcal U$  гармоническая в двух открытых полукругах $b+r_0\DD\cap \CC_{ \rh}$ и $-b+r_0\DD\cap \CC_{ \lh}$
для некоторого $r_0>0$, то по варианту  \eqref{nur0+b} замечания \ref{remB01}   правую часть 
в \eqref{112s} можно заменить на выметание  $\varDelta_\mathcal U^{\Bal^{1}_b}$ рода $1$ на $\overline \strip_{b}$, т.е. 
 \begin{equation}\label{112s0}
\frac{1}{2\pi}{\bigtriangleup}\bigl(\mathcal U^{\Bal_b}\bigr)\overset{\eqref{112s}}{=}\varDelta_\mathcal U^{\Bal^{1}_b}.
\end{equation}  
\end{remark}

\subsection{Выметание 
 на объединение вертикальной полосы и вещественной оси}\label{bRu}

\begin{propos}[{\cite[теорема 3]{Kha22bal}}]\label{prop4_1}
Пусть  $M\not\equiv -\infty$ --- субгармоническая функция конечного типа с распределением масс  Рисса $\varDelta_M$.
Тогда для любого  $s\in \RR^+$ существуют субгармоническая   функция $M_{\RR}$ конечного типа 
с распределением масс Рисса  $\varDelta_{M_{\RR}}=\frac{1}{2\pi}{\bigtriangleup}M_{\RR}$, сосредоточенным на паре лучей  $\RR\setminus [-s,s]$, 
со свойством 
\begin{equation}\label{MMR}
\sup_{1\leq r<R<+\infty}\Bigl|\ell_{\varDelta_M}(r,R) -\ell_{\varDelta_{M_{\RR}}}(r,R)\Bigr|<+\infty, 
\end{equation} 
и субгармоническая функция $M_{s}$  конечного типа с носителем  $\supp \varDelta_{M_{s}}\subset \overline \strip_s$ 
 распределения масс Рисса $\varDelta_{M_s}=\frac{1}{2\pi}{\bigtriangleup}M_s$, для которых  
\begin{align}
M(x) &\equiv M_{\RR}(x)+M_{s}(x)\quad\text{при всех  $x\in \RR$},
\label{{MRab}r}\\
M(z) &\equiv M_{\RR}(z)+M_{s}(z)\quad\text{при всех  $z\in \overline \strip_s$}, 
\label{{MRab}i}\\
M(z) &\leq M_{\RR}(z)+M_{s}(z)\quad\text{при всех  $z\in \CC$}. 
\label{{MRab}leq}
\end{align}
\end{propos}

\section{Доказательство импликации \ref{II_2}$\Rightarrow$\ref{I} 
основной теоремы}\label{S7}

Несколько раз будет использована 
\begin{lemma}[{\cite[предложение 4.1, (4.19)]{KhaShmAbd20}}]\label{lemJl} 
Для  любой субгармонической функции $u\not\equiv -\infty$ конечного типа 
с распределением масс  Рисса $\varDelta_u$ в обозначении 
\begin{equation}\label{JiR}
J_{i\RR}(r,R; u):=\frac{1}{2\pi}\int_r^R \frac{u(-iy)+u(iy)}{y^2} \dd y, \quad 0<r<R\leq +\infty
\end{equation}
при любом $r_0\in \RR^+\setminus 0$ выполнены  соотношения 
\begin{align}
\sup_{r_0\leq r<R<+\infty} &\Bigl|J_{i\RR}(r,R;u)-\ell_{\varDelta_u}^{\rh}(r,R)\Bigr|
<+\infty,
\label{{Jll}r}
 \\
\sup_{r_0\leq r<R<+\infty} &\Bigl|J_{i\RR}(r,R;u)-\ell_{\varDelta_u}^{\lh}(r,R)\Bigr|
<+\infty,
\label{{Jll}l}
\\
\sup_{r_0\leq r<R<+\infty} &\Bigl|J_{i\RR}(r,R;u)-\ell_{\varDelta_u}(r,R)\Bigr|
<+\infty.
\label{{Jll}m} 
\end{align}
\end{lemma}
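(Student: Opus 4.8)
The plan is to reduce all three estimates to a single Carleman-type identity on right half-annuli. First I would invoke the representation of Proposition~\ref{pr:rep}: since $u$ is of finite type, its Riesz measure $\varDelta_u$ is of finite type and $u=u_{\varDelta_u}+H$, where $H$ is an entire harmonic polynomial with $\deg H\le 1$ (because $\type_1[u]<+\infty$ forces $\type_2[u]=0$). The harmonic summand $H(z)=c_0+a\Re z+b\Im z$ contributes nothing to the masses $\ell^{\rh}_{\varDelta_u}$ and $\ell^{\lh}_{\varDelta_u}$, its Riesz measure being zero, and only a bounded amount to $J_{i\RR}$: on $i\RR$ one has $H(iy)+H(-iy)\equiv 2c_0$, so $\frac1{2\pi}\int_r^R\frac{H(iy)+H(-iy)}{y^2}\,\dd y=\frac{c_0}{\pi}\bigl(\tfrac1r-\tfrac1R\bigr)$ is bounded for $r\ge r_0$. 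Hence it suffices to prove the lemma for $u=u_{\varDelta_u}$, for which $\ord[u]\le 1$.

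The core is \eqref{{Jll}r}. I would apply Green's second identity on the right half-annulus $\{r<|z|<R,\ \Re z>0\}$ with the harmonic weight $\Re\frac1z$, which vanishes on $i\RR$ and whose outward normal derivative along $i\RR$ (normal pointing into $\CC_{\lh}$) equals $-1/y^2$. Since $\Re\frac1z$ is harmonic off the origin and $\frac1{2\pi}{\bigtriangleup}u=\varDelta_u$, the area term reproduces $2\pi\,\ell^{\rh}_{\varDelta_u}(r,R)$, while on the two imaginary-axis segments the term $-u\,\partial_n\bigl(\Re\tfrac1z\bigr)$ integrates to $\int_r^R\frac{u(iy)+u(-iy)}{y^2}\,\dd y=2\pi J_{i\RR}(r,R;u)$ by \eqref{JiR}. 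Therefore $\ell^{\rh}_{\varDelta_u}(r,R)-J_{i\RR}(r,R;u)$ equals (up to bounded boundary-mass corrections) a sum of contributions carried by the two circular arcs $|z|=r$ and $|z|=R$ in $\CC_{\rh}$; equivalently one replaces $\Re\frac1z$ by the Carleman weight $\Re\bigl(\frac1z-\frac{z}{R^2}\bigr)$ to annihilate the $\partial_n u$ term on the outer arc. It then remains to bound these arc terms uniformly over $r_0\le r<R<+\infty$.

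The hard part will be precisely this uniform control of the circular-arc terms, which involve the boundary values of $u$ and its (distributional) radial derivative on the circles $|z|=\rho$, $\rho\in\{r,R\}$. The finite-type hypothesis supplies the upper bound $u(z)=O(|z|)$ needed for the pieces of the form $\frac1\rho\int_{-\pi/2}^{\pi/2}u(\rho e^{i\theta})\cos\theta\,\dd\theta$, while the matching lower bounds and the averaged radial-derivative pieces are handled by the finite-order theory of subharmonic functions (Poisson--Jensen and Nevanlinna estimates on circles), which for $u=u_{\varDelta_u}$ with $\ord[u]\le1$ can be made explicit. Here the balance condition \eqref{con:LpZ} furnished by Proposition~\ref{pr:rep} is the structural input that prevents the arc terms from accumulating as $\rho\to+\infty$; this yields \eqref{{Jll}r}.

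Finally, \eqref{{Jll}l} follows from \eqref{{Jll}r} by the reflection $z\mapsto-\Bar z$: it preserves subharmonicity, fixes $i\RR$ pointwise so that $J_{i\RR}$ is unchanged, and interchanges $\CC_{\rh}$ with $\CC_{\lh}$ so that $\Re^+\frac1z$ turns into $\Re^-\frac1z$, i.e. $\ell^{\rh}$ into $\ell^{\lh}$; applying \eqref{{Jll}r} to $z\mapsto u(-\Bar z)$ gives \eqref{{Jll}l} at once. (Alternatively, \eqref{{Jll}l} follows by combining \eqref{{Jll}r} with the $\RR$-balance $\bigl|\ell^{\rh}_{\varDelta_u}(r,R)-\ell^{\lh}_{\varDelta_u}(r,R)\bigr|=O(1)$ implied by \eqref{con:LpZRl}.) For \eqref{{Jll}m} note that by \eqref{df:dDlLm} one has $\ell_{\varDelta_u}=\max\bigl\{\ell^{\lh}_{\varDelta_u},\ell^{\rh}_{\varDelta_u}\bigr\}$; since \eqref{{Jll}r}--\eqref{{Jll}l} place both $\ell^{\rh}_{\varDelta_u}$ and $\ell^{\lh}_{\varDelta_u}$ within one fixed distance of $J_{i\RR}$, their maximum lies in the same interval, which gives \eqref{{Jll}m}.
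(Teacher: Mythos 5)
Your proposal has the right architecture, and its endgame is sound. For the record: the paper itself gives no proof of this lemma at all --- it is imported verbatim from \cite{KhaShmAbd20} (Proposition~4.1, formula~(4.19)), so the comparison is with the Carleman-formula technology underlying that source, which is exactly the route you chose. Your peripheral steps check out: the reduction via the representation theorem $u=u_{\varDelta_u}+H$ with $\deg H\le 1$ (legitimate, since $\type_1[u]<+\infty$ gives $\type_2[u]=0$), the computation $H(iy)+H(-iy)\equiv 2c_0$ killing the harmonic summand in \eqref{JiR}, the reflection $z\mapsto-\bar z$ deriving \eqref{{Jll}l} from \eqref{{Jll}r} (it fixes $i\RR$ pointwise and turns $\Re^{+}\frac1z$ into $\Re^{-}\frac1z$), and the observation that \eqref{{Jll}m} follows from \eqref{{Jll}r}--\eqref{{Jll}l} because $\bigl|\max\{a,b\}-J\bigr|\le\max\bigl\{|a-J|,|b-J|\bigr\}$.

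The genuine gap is where you yourself locate ``the hard part'': uniform control of the arc terms over \emph{both} variables $r_0\le r<R<+\infty$. With the inner radius $r$ moving, the inner-arc flux $\int_{-\pi/2}^{\pi/2}\partial_\rho u(re^{i\theta})\cos\theta\,\dd\theta$ is not bounded by anything you invoke: the bound $\int_{-\pi/2}^{\pi/2}u(re^{i\theta})\cos\theta\,\dd\theta=O(r)$ does not control its $r$-derivative, Poisson--Jensen gives nothing pointwise in $r$, and the Lindel\"of condition \eqref{con:LpZ} is a red herring here --- it concerns signed annular sums of $\Re\frac1z\,\dd\varDelta_u$ and plays no role in bounding weighted circle means or radial-derivative averages (indeed the lemma holds for every finite-type $u$, and \eqref{con:LpZ} is automatic by the first half of the representation theorem, so it cannot be the ``structural input''). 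Worse, by the very Green identity you set up, this flux at radius $r$ equals $2\pi\bigl(\ell^{\rh}_{\varDelta_u}(r_0,r)-J_{i\RR}(r_0,r;u)\bigr)$ up to bounded terms, so your plan silently presupposes the fixed-inner-radius case of the lemma. The missing --- and standard --- observation repairs everything at once: both $J_{i\RR}$ and $\ell^{\rh}_{\varDelta_u}$ are additive in the interval $(r,R]$, so with $D(r,R):=J_{i\RR}(r,R;u)-\ell^{\rh}_{\varDelta_u}(r,R)$ one has $D(r,R)=D(r_0,R)-D(r_0,r)$, whence
\begin{equation*}
\sup_{r_0\le r<R<+\infty}\bigl|D(r,R)\bigr|\le 2\sup_{\rho>r_0}\bigl|D(r_0,\rho)\bigr|,
\end{equation*}
and only the case of frozen inner radius $r_0$ remains. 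There the inner-arc terms are a single finite constant (choose an a.e.\ good radius in $(r_0/2,r_0]$, since $\partial_\rho u$ is only a measure), on the outer circle your Carleman weight $\Re\bigl(\frac1z-\frac{z}{R^2}\bigr)$ annihilates the flux, and the surviving term $\frac{2}{R}\int_{-\pi/2}^{\pi/2}u(Re^{i\theta})\cos\theta\,\dd\theta$ is $O(1)$ by the elementary finite-type bound $\int_0^{2\pi}\bigl|u(Re^{i\theta})\bigr|\,\dd\theta=O(R)$, which follows from $\int u^{+}\le 2\pi{\mathrm M}_u(R)$ plus the Jensen lower bound for the full circular mean --- no Lindel\"of condition and no Nevanlinna machinery. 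Do also spell out the two $1/R^2$ corrections you compressed into ``bounded boundary-mass corrections'': $\frac1{R^2}\int_{r<|z|\le R}\Re^{+}\frac1z\cdot|z|^2\,\dd\varDelta_u=O(1)$ by finite upper density of $\varDelta_u$, and $\frac1{R^2}\int_{r\le|y|\le R}\bigl|u(iy)\bigr|\,\dd y=O(1)$, which needs $\int_{|y|\le R}u^{-}(iy)\,\dd y=O(R^2)$ --- true, but it requires a Riesz decomposition of $u$ in $2R\DD$ and a Fubini estimate on the logarithmic potential, not just the upper type bound.
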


\begin{propos}\label{llJ} Пусть $\mu$ и $\nu$ --- распределения масс  конечной верхней плотности. Тогда следующие два утверждения эквивалентны: 
\begin{enumerate}
\item\label{1l} Существует   неограниченная последовательность  $(r_n)_{n\in \NN}$ в $\RR^+\setminus 0$,
возрастающая  не быстрее геометрической прогрессии в том смысле, что 
\begin{equation}
\limsup\limits_{n\to\infty}\frac{r_{n+1}}{r_n}<+\infty, 
\label{rn}
\end{equation}
для которой выполнено соотношение 
\begin{equation}
\limsup_{N\to  \infty}\sup\limits_{0\leq n<N}
\Bigl(\ell_{\nu}(r_n,r_N)-\ell_{\mu}(r_n,r_N)\Bigr)<+\infty.
\label{cprec}
\end{equation}
\item\label{2l} Для любого   $r_0\in \RR^+\setminus 0$ выполнено соотношение 
\begin{equation}\label{lJMmul+}
\sup_{r_0\leq r<R<+\infty} \Bigl(\ell_{\nu}(r,R)-\ell_{\mu}(r,R)\Bigr)<+\infty.
\end{equation} 

Кроме того, если $\mu$ --- распределение масс Рисса субгармонической функции $M\not\equiv -\infty$
конечного типа, то предыдущие утверждения \ref{1l}--\ref{2l} эквивалентны каждому из следующих двух   утверждений: 
\item\label{3l} Существует такая же, как в утверждении \ref{1l},  последовательность 
$(r_n)_{n\in \NN}$ со свойством \eqref{rn}, для которой  в обозначении 
\eqref{JiR} 
\begin{equation}\label{lMrn}
\limsup_{r_N\to  \infty}\sup\limits_{0\leq n<N}
\Bigl(\ell_{\nu}(r_n,r_N)-J_{i\RR}(r_n,r_N;M)\Bigr)<+\infty.
\end{equation}
\item\label{4l} Выполнено соотношение \eqref{lJ2r}.
\end{enumerate}
\end{propos}
\begin{proof}
Эквивалентности  \ref{1l}$\Leftrightarrow$\ref{3l} и \ref{1l}$\Leftrightarrow$\ref{4l} следуют из соотношения \eqref{{Jll}m}  леммы \ref{lemJl}. 
Импликация \ref{2l}$\Rightarrow$\ref{1l} очевидна. Если выполнено утверждение  \ref{1l}, то согласно  \eqref{rn} существует число 
$A>1$, для которого $r_{n+1}\leq Ar_n$ для всех $n\in \NN$,  где можем рассмотреть произвольное $r_0\in (0,r_1]$, а также согласно \eqref{cprec} существует  число $B>0$ и номер $N_0\in \NN$, для которых  
\begin{equation}\label{lnNA}
\ell_{\nu}(r_n,r_N)\leq \ell_{\mu}(r_n,r_N)+B
\end{equation}
при каждом  $N> N_0$ для  любых $n<N$.
При любых    $0\leq n<N\leq N_0$   
$$
\ell_{\nu}(r_n,r_N)\leq \int_{r_0}^{r_{N_0}}\Bigl|\Re\frac{1}{z}\Bigr|\dd \nu(z)\leq B_0,
$$
где  $B_0$ не зависит от $n<N$.
Таким образом, при достаточно большом  $B>0$  неравенства \eqref{lnNA} выполнены при любых целых $N>n\geq 0$.
При $r_0\leq r<R$ выберем  $n\in \NN$ и $N\geq n$ так, что  $r\in (r_{n},r_{n+1}]$ и $R\in (r_N,r_{N+1}]$. Тогда 
\begin{multline*}
\ell_{\nu}(r,R)\leq \ell_{\nu}(r_n,r_{n+1})+\ell_{\nu}(r_{n+1},r_N)+\ell_{\nu}(r_N, r_{N+1})\\
\overset{\eqref{lnNA}}{\leq} \ell_{\nu}(r_n,Ar_n) + \ell_{\mu}(r_n,r_N)+B+\ell_{\nu}(r_N,Ar_N)
\\
\leq \frac{\nu^{\rad}(Ar_n)}{r_n}+ \ell_{\mu}(r,R)+B+\frac{\nu^{\rad}(Ar_N)}{r_N},
\end{multline*}
откуда в силу конечной верхней плотности распределения масс $\nu$ имеем \eqref{lJMmul+}. 
\end{proof}

\begin{proof}[импликации \ref{II_2}$\Rightarrow$\ref{I}]
Из соотношения \eqref{lJ2} утверждения \ref{II_2} в обозначении \eqref{JiR} для некоторого   $C\in \RR$ имеем неравенства 
\begin{equation}\label{1112}
\ell_{\nu}(2^n,2^{n+1})\leq J_{i\RR}(2^n, 2^{n+1};M)+C\leq \type[M]+1+C
\end{equation} 
при всех достаточно больших $n\in \NN$. При этом из предельного соотношения в условиях 
\eqref{nubstr-} основной теоремы или из вытекающего из него по замечанию \ref{rem0} условия  [$\upnu$]  с \eqref{nubstr}
имеем, как в \eqref{ellsn},  оценки снизу для 
\begin{equation*}
\ell_{\nu}(2^n,2^{n+1})\geq \ell_\nu^{\rh}(2^n, 2^{n+1})
\overset{\eqref{ellsn}}{\geq} a 2^{-n-1}\bigl(\nu{\lfloor}_{\CC_{\overline\rh}}^{\rad}(2^{n+1})-\nu{\lfloor}_{\CC_{\overline\rh}}^{\rad}(2^{n})\bigr),
\end{equation*}
откуда согласно \eqref{1112} получаем 
\begin{equation*}
\nu{\lfloor}_{\CC_{\overline\rh}}^{\rad}(2^{n+1})-\nu{\lfloor}_{\CC_{\overline\rh}}^{\rad}(2^{n})
\leq  \bigl(\type[M]+1+C\bigr)\frac{1}{a} 2^{n+1}
\end{equation*}
при достаточно больших $n\in \NN$. Это означает, что $\nu{\lfloor}_{\CC_{\overline\rh}}$ --- распределение масс конечной верхней плотности. Аналогично то же самое  устанавливаем  и для сужения $\nu{\lfloor}_{\CC_{\lh}}$. Таким образом, 
из \eqref{lJ2} и предельного соотношения в \eqref{nubstr-} следует, что {\it распределение масс\/ $\nu$ конечной верхней плотности.\/} 

Из  эквивалентности \ref{3l}$\Leftrightarrow$\ref{2l}  предложения \ref{llJ}  
для случая двоичной последовательности из $r_n:=2^n$ в \eqref{rn} соотношение 
\eqref{lJ2} утверждения \ref{II_2} основной теоремы влечёт за собой соотношение 
\begin{equation}\label{lJMl}
\sup_{1\leq r<R<+\infty} \Bigl(\ell_{\nu}(r,R)-\ell_{\varDelta_M}(r,R)\Bigr)<+\infty
\end{equation} 
с распределением масс Рисса $\varDelta_M\overset{\eqref{Riesz}}{=}\frac{1}{2\pi}{\bigtriangleup}M$ функции $M$. 
Применим предложение \ref{prop4_1} к функции $M$ с выметанием на объединение $\RR$ с замкнутой вертикальной полосой 
 $\overline \strip_{s}$ ширины $2s$, где $s$ определено равенством в \eqref{nubstr-}
из основной теореме.   В обозначениях  предложения \ref{prop4_1} из  \eqref{lJMl} и \eqref{MMR} получаем 
\begin{multline*}
\sup_{1\leq r<R<+\infty} \Bigl(\ell_{\nu}(r,R)-\ell_{\varDelta_{M_{\RR}}}(r,R)\Bigr)
\leq \sup_{1\leq r<R<+\infty} \Bigl(\ell_{\nu}(r,R)-\ell_{\varDelta_M}(r,R)\Bigr)
\\+\sup_{1\leq r<R<+\infty}\Bigl|\ell_{\varDelta_M}(r,R) -\ell_{\varDelta_{M_{\RR}}}(r,R)\Bigr|
\overset{\eqref{lJMl},\eqref{MMR}}{<}+\infty. 
\end{multline*}
Крайние части этих неравенств можно записать как  
\begin{equation}\label{supmul}
\sup_{1\leq r<R<+\infty} \Bigl(\ell_{\nu}(r,R)-\ell_{\mu}(r,R)\Bigr)
<+\infty ,
\end{equation}
где для распределения масс Рисса функции $M_{\RR}$ использовано обозначение 
\begin{equation}\label{supmum}
\mu:=\varDelta_{M_{\RR}}, \quad\supp \mu=\supp \varDelta_{M_{\RR}}\subset \RR\setminus (-s,s).
\end{equation}

Рассмотрим произвольное фиксированное число 
\begin{equation}\label{b}
b\in (0,s).
\end{equation} 
Тогда, наряду с   \eqref{supmul}, совпадающим с условием \eqref{lJMmull}
 предложения \ref{pr52s},  по \eqref{supmum} выполнено и условие \eqref{bst} 
 предложения \ref{pr52s} с $b\in (0,s)$, а значит, и его заключения,  обозначения для  объектов из которого и используем ниже.  
   
По теореме Вейерштрасса\,--\,Адамара\,--\,Лин\-д\-е\-л\-ё\-ф\-а\,--\,Бре\-ло 
для распределений  масс $\nu+\alpha+\beta_++\beta_-$ конечной верхней плотности, удовлетворяющего условию Линделёфа, 
 существует субгармоническая функция $u$ конечного типа  с распределением масс Рисса 
\begin{equation}\label{unu}
\varDelta_u=\nu+\alpha+\beta_++\beta_-\geq \nu,
\end{equation} 
для которого  по \eqref{supmum}, построению $\alpha$ и $\beta_{\pm}$ в предложении \ref{pr52s} и по условиям  
\eqref{nubstr-} на $\nu$ в виде условия [$\upnu$] с \eqref{nubstr} из замечания \ref{rem0} имеют место включения 

\begin{equation}\label{suppdu}
\supp (\nu+\alpha-\mu)\subset \CC\setminus \bigl(\overline {\mathrm X}_a\cup \overline \strip_{b}\bigr). 
\quad \supp \beta_{\pm}\subset \pm b+i\RR.
\end{equation}. 

Рассмотрим $\delta$-субгармоническую функцию 
\begin{equation}\label{UuvM}
{\mathcal U}:=u-M_{\RR}, \quad \varDelta_{\mathcal U}:=\frac{1}{2\pi}{\bigtriangleup}{\mathcal U}=\nu+\alpha+\beta_++\beta_-
 -\mu,
\end{equation}
представленную в виде разности субгармонических функций конечного типа. 
По предложению \ref{Balvs} существует  $\delta$-субгармоническое  выметание\/ $\mathcal U^{\Bal_b}$ на $\overline \strip_b$
функции $\mathcal U$ c распределением зарядов Рисса \eqref{112s}, 
представимое вне некоторого полярного множества  в виде разности 
\eqref{reprvB} субгармонических функций  порядка $\leq 1$. 
Поскольку ввиду \eqref{suppdu} и \eqref{UuvM} функция $\mathcal U$ гармоническая  
в двух открытых полукругах $b+r_0\DD\cap \CC_{ \rh}$ и $-b+r_0\DD\cap \CC_{ \lh}$
для  $r_0:=s-b>0$, то по замечанию \ref{rem010}
правую часть в \eqref{112s} можно заменить, как в \eqref{112s0}, на выметание  $\varDelta_\mathcal U^{\Bal^{1}_b}$ рода $1$ на $\overline \strip_{b}$, которое по построению имеет вид
\begin{align*}
\varDelta_{\mathcal U}^{\Bal^{1}_b}&=(\nu+\alpha+\beta_++\beta_--\mu)^{\Bal^{1}_b}\\
&\overset{\eqref{bm_1b}}{=}
(\nu+\alpha -\mu)^{\Bal^1_{b}}+\beta_++\beta_- 
\overset{\eqref{bm_1b}}{=}c\mathfrak m_1{\lfloor}_{b+i\RR}+c\mathfrak m_1{\lfloor}_{-b+i\RR}.
\end{align*} 
В правой части здесь явно выписанное  {\it распределение масс,\/} равное домноженной на $c\in \RR^+$ сумме линейных мер Лебега на паре прямых $\pm b+i\RR$, удовлетворяющее, очевидно, условию Линделёфа.
 Отсюда, учитывая вторую часть теоремы Вейерштрасса\,--\,Адамара\,--\,Линделёфа\,--\,Брело,  $\mathcal U^{\Bal_b}$ --- {\it субгармоническая функция конечного типа,\/} равная сумме субгармонической  функции 
\begin{equation}\label{h0}
2\pi c (\Re z-b)^++2\pi c (\Re z+b)^-\underset{z\in \CC}{=}
\begin{cases}
2\pi c (\Re z-b)^+&\text{ при $\Re z>b$},\\
0&\text{ при $|\Re  z|\leq b$},\\
2\pi c (\Re z+b)^-&\text{ при $\Re z<-b$}.
\end{cases}
\end{equation}
с некоторым {\it гармоническим многочленом\/} $h$ степени $\deg h\leq 1$.  
Из этих построений по определению $\delta$-субгармонического выметания на полосу $ \overline\strip_b$
 имеем 
\begin{equation*}
\mathcal U^{\Bal_b}(z)\overset{\eqref{UuvM}}{\equiv} u(z)-M_{\RR}(z)\quad\text{при всех $z\in \overline\strip_b$},
\end{equation*}
откуда, учитывая явный вид   $\mathcal U^{\Bal_b}$ как суммы функции \eqref{h0} и $h$, 
получаем 
\begin{equation}\label{UHM}
(u-M_{\RR}-h)(z)=
\mathcal U^{\Bal_b}(z)-h(z)\overset{\eqref{h0}}{\equiv}0 \quad\text{при всех $z\in \overline\strip_b$},
\end{equation}
но вне некоторого полярного множества. 
Таким образом, построена субгармоническая функция $u-h$ конечного типа, для которой 
\begin{equation}\label{uhM}
(u-h)(z)\overset{\eqref{UHM}}{\equiv} M_{\RR}(z)\quad\text{при всех $z\in \overline\strip_b$},
\end{equation} 
 с распределением масс Рисса 
\begin{equation}\label{ununu}
\varDelta_{u-h}=\frac{1}{2\pi}({\bigtriangleup}u-{\bigtriangleup}h)= \frac{1}{2\pi}{\bigtriangleup}u \overset{\eqref{unu}}{=}\nu+\alpha+\beta_++\beta_-\geq \nu,
\end{equation} 
Для субгармонической функцией $M_s$ конечного типа  из предложения \ref{prop4_1} рассмотрим субгармоническую функцию 
$U:=(u-h)+M_s$ также конечного типа с распределением масс Рисса $\varDelta_U\geq \varDelta_{u-h}\overset{\eqref{ununu}}{\geq}\nu$, для которой на полосе $\overline \strip_b\subset  \strip_s$ получаем тождества
\begin{equation}\label{Umstb}
U(z)\equiv (u-h+M_s)(z)\underset{z\in \overline \strip_b}{\overset{\eqref{uhM}}{\equiv}} M_{\RR}(z)+M_s(z)
\underset{z\in \overline \strip_s}{\overset{\eqref{{MRab}i}}{\equiv}} M(z)
\end{equation}
вне некоторого полярного множества. Две субгармонические функции, совпадающие на открытом множестве вне полярного множества, совпадают всюду на этом открытом множестве.  Таким образом, построена  субгармоническая функция $U$ конечного типа с тождеством 
$U(z)\equiv M(z)$ для всех $z\in \strip_b$, где число $b$ выбиралось в \eqref{b} произвольным из открытого интервала $(0,s)$. Следовательно, тождество $U(z)\equiv M(z)$ можно считать выполненным  для всех $z$
из замкнутой полосы $\overline \strip_b$, и импликация \ref{II_2}$\Rightarrow$\ref{I} основной теоремы доказана. 
\end{proof}

\section{Доказательства импликаций 
\ref{I}$\Rightarrow$\ref{I_2}$\Rightarrow$\ref{I_3} 
основной   теоремы}\label{S9}

\begin{proof}[импликации \ref{I}$\Rightarrow$\ref{I_2}] 
Применение утверждения \ref{I}  с  произвольным $b'\in (b,s)$ вместо $b\in [0,s)$
обеспечивает существование субгармонической функции $U\not\equiv -\infty$ 
конечного типа с распределением масс Рисса $\varDelta_U\geq \nu$, для которой выполнено тождество      
\begin{equation}\label{Uequi}
U(z)\overset{\eqref{UeqM}}{\underset{z\in \overline\strip_{b'}}{\equiv}} M(z).
\end{equation}
Функцию $U$  можно представить в виде суммы 
трёх субгармонических функций 
\begin{equation}\label{UvarU}
U=v+m+u, \quad v\not\equiv -\infty, \quad m\not\equiv -\infty, \quad u\not\equiv -\infty,
\end{equation}
с распределениями масс Рисса конечной верхней плотности соответственно
\begin{equation}\label{Uvarvar}
\varDelta_v=\nu, \quad \varDelta_m=\varDelta_M{\lfloor}_{\strip_{s}}, \quad 
\varDelta_u=\varDelta_U-\varDelta_M{\lfloor}_{\strip_{s}}-\nu.
\end{equation}
По  тождеству \eqref{Uequi}  имеем $\varDelta_U{\lfloor}_{\strip_{b'}}=\varDelta_M{\lfloor}_{\strip_{b'}}$, и   $ \strip_{b'}\cap \supp \nu =\varnothing$ по \eqref{nubstr-}. Отсюда  $\strip_{b'} \cap \supp\varDelta_u\overset{\eqref{Uvarvar}}{=}\varnothing$ и субгармоническая функция $u$ {\it гармоническая  на открытой вертикальной полосе\/} $\strip_{b'}$ ширины $2b'>2b$. 

Неоднократно будет использована следующая 
\begin{theorem}[{\cite[основная теорема]{Kha22minorant}}]\label{th8_1}
Пусть $u\not\equiv -\infty$ --- субгармоническая функция на $\CC$, а функция 
 $r\colon \CC\to (0,1]$ удовлетворяет условию \eqref{qr}, эквивалентному\/ \eqref{nubstrr} из замечания\/ 
{\rm \ref{rem3}}.  Тогда существует целая функция $f\not\equiv 0$, для которой 
\begin{align}
\ln |f(z)|\overset{\eqref{vpD}}{\leq} u^{\bullet r}(z) &\overset{\eqref{vbc}}{\leq} u^{\circ r}(z) 
\overset{\eqref{vbc}}{\leq}\sup_{D_z(r(z))}u \quad\text{при всех $z\in \CC$},
\label{ubc}
\\
{\mathrm M}_{\ln |f|}(R)&\overset{\eqref{u}}{\leq}  {\mathrm M}_u\bigl(R+{\mathrm M}_r(R) \bigr)\quad\text{при всех $R\in \RR^+$}.
\label{Mulnf}
\end{align}
Если для  этой субгармонической  функции $u\not\equiv -\infty$ её распределение масс Рисса $\varDelta_u$  конечного порядка $\ord[\varDelta_u]<+\infty$,  то для  каждого  $d\in (0,2]$ целую функцию $f\not\equiv 0$, удовлетворяющую\/  \eqref{ubc}--\eqref{Mulnf}, можно подобрать так, что 
\begin{align}
\ln |f(z)|&\leq u(z) \quad\text{при всех $z\in \CC\setminus E$, где}
\label{{uE1}u}\\
{\mathfrak m}_d^{r}(E\cap S)&\leq \sup_{z\in S} r(z)
\quad\text{для  любого  $S\subset  \CC$}.
\label{{uE1}E}
\end{align}
\end{theorem}

Постоянная функция 
\begin{equation}\label{drfr}
r\colon z\underset{z\in \CC}{\longmapsto}\min\Bigl\{\frac12(b'-b), 1\Bigr\}>0, 
\end{equation}
очевидно, удовлетворяет условию \eqref{qr} теоремы \ref{th8_1}.  По теореме \ref{th8_1} найдётся целая функция 
$h\not\equiv 0$, для которой  с функцией $r$ из  \eqref{drfr} имеем 
\begin{equation}\label{hub}
\ln\bigl|h(z)\bigr|\overset{\eqref{ubc}}{\leq} u^{\circ r}(z)\quad\text{при всех $z\in \CC$}.
\end{equation}
Отсюда  по  представлению \eqref{UvarU} на $\CC$ получаем неравенство 
\begin{equation}\label{eqvmr}
v+m+\ln|h|\overset{\eqref{hub}}{\leq} v+m+u^{\circ r} \quad\text{на $\CC$}, 
\end{equation}
которое может быть продолжено как 
$$
v+m+\ln|h|\overset{\eqref{vbc}}{\leq} v^{\circ r}+m^{\circ r}+u^{\circ r}
\overset{\eqref{vpC}}{=}U^{\circ r}\overset{\eqref{drfr}}{\leq}U^{\circ 1},
$$
где справа субгармоническая функция  $U^{\circ 1}$ конечного типа. Следовательно,  и $v+m+\ln|h|$ --- субгармоническая функция конечного типа. 

Кроме того,  согласно выбору \eqref{drfr} постоянной функции $r$ для функции $u$, гармонической на полосе $\strip_{b'}$ ширины $2b'>2b$, 
на $\overline \strip_b$ имеем тождество 
\begin{equation}\label{ucirc}
u^{\circ r}(z)\underset{z\in \overline \strip_b}{\equiv}u(z),
\end{equation}
откуда согласно  \eqref{eqvmr} получаем
$$
v(z)+m(z)+\ln|h(z)|\overset{\eqref{eqvmr}}{\leq} v(z)+m(z)+u^{\circ r}(z)
\underset{z\in \overline \strip_b}{\overset{\eqref{ucirc}}{\equiv}}
v(z)+m(z)+u(z).
$$
Правая часть здесь по представлению \eqref{UvarU}  тождественно равна $U(z)$
для всех $z\in \CC$, а тождество  \eqref{Uequi} влечёт за собой  \eqref{umM}, что  даёт утверждение  \ref{I_2}.
\end{proof}

\begin{proof}[импликации \ref{I_2}$\Rightarrow$\ref{I_3}]  Для значения $b\in [0,s)$ выберем  $b'\in (b,s)$
и заменим функцию $r$ из \eqref{qr} на м\'еньшую функцию 
\begin{equation}\label{brr}
r_*(z)\underset{z\in \CC}{:=}\min\Bigl\{r(z), \frac12(b'-b)\Bigr\}\leq r(z)\leq 1,
\end{equation}
для которой, очевидно, по-прежнему выполнено условие \eqref{qr} и 
\begin{equation}\label{r*D}
D_z\bigl(r_*(z)\bigr)\subset \strip_{b'}\quad\text{при всех $z\in \overline \strip_{b}$.}
\end{equation}

Пусть  выполнено утверждения \ref{I_2} с числом $b'$ в роли $b$.

Применение  теоремы \ref{th8_1} с функцией $r_*$ вместо $r$ к  субгармонической функции $m$  с  распределением масс Рисса   $\frac{1}{2\pi}{\bigtriangleup}m=\frac{1}{2\pi}{\bigtriangleup}\!M{\lfloor}_{\strip_{s}}$ конечной верхней плотности 
 позволяет подобрать 
целую функцию $f\not\equiv 0$  так, что 
\begin{align}
\ln \bigl|f(z)\bigr|&\overset{\eqref{vpD}}{\leq} m^{\bullet r_*}(z) 
\quad\text{при всех $z\in \CC$},
\label{ubcm}\\
\ln \bigl|f(z)\bigr|&\leq m(z) \quad\text{при всех $z\in \CC\setminus E$,}
\label{{uE1m}u}
\end{align}
где для множества $E\subset \CC$ имеет место неравенство 
\begin{equation}\label{Er*}
{\mathfrak m}_d^{r_*}(E\cap S)\overset{\eqref{{uE1}E}}{\leq} \sup_{z\in S} r_*(z)
\quad\text{для  любого  $S\subset  \CC$}.
\end{equation}
Но из  неравенств $r(z)\underset{z\in \CC}{\overset{\eqref{brr}}{\geq}} r_*(z)$  следует  
$$
{\mathfrak m}_d^r(E\cap S)\overset{\eqref{mts}}{\leq} {\mathfrak m}_d^{r_*}(E\cap S)
\overset{\eqref{Er*}}{\leq}\sup_{z\in S} r_*(z)\leq \sup_{z\in S} r(z)
\quad\text{для  любого  $S\subset  \CC$},
$$
что для $E_b:=E$ даёт   \eqref{ubull}. При этом из неравенств  
\eqref{umM} и \eqref{{uE1m}u} получаем 
\begin{equation*}
v(z)+\ln \bigl|f(z)h(z)\bigr|\underset{z\in \CC}{\overset{\eqref{{uE1m}u}}{\leq}} 
v(z)+m(z)+\ln\bigl|h(z)\bigr|\overset{\eqref{umM}}{\leq}
M(z)\text{ при всех $z\in \overline \strip_{b'}\setminus E_b$},
\end{equation*}
что даст соотношения \eqref{ubull1}--\eqref{ubull} из утверждения \ref{I_3}, если переобозначить целую  функцию $fh\not\equiv 0$
как целую функцию $h\not\equiv 0$. 

Применяя интегральные средние \eqref{vpD} по кругам $D_z\bigl(r(z)\bigr)$  
к неравенству \eqref{umM} утверждения \ref{I_2},  получаем неравенства 
\begin{multline*}\label{umMV}
v(z)+\ln \bigl|f(z)\bigr|+\ln\bigl|h(z)\bigr|
\overset{\eqref{ubcm}}{\leq} 
v(z)+m^{\bullet r_*}(z)+\bigl(\ln|h|\bigr)(z)
\\
\overset{\eqref{vbc}}{\leq} v^{\bullet r_*}(z)+m^{\bullet r_*}(z)+\bigl(\ln|h|\bigr)^{\bullet r_*}(z)
\overset{\eqref{vpD}}{=} \bigl(v+m+\ln|h|\bigr)^{\bullet r_*}(z)
\text{ при  всех $z\in \CC$,} 
\end{multline*} 
откуда сразу следует, что   субгармоническая функция $v+\ln |fh|$ конечного типа, поскольку таковой по утверждению \ref{I_2}  является 
функция  $v+m+\ln|h|$, а функция  $r_*$ ограничена.  Кроме того, из крайних частей этих неравенств согласно   \eqref{umM} и \eqref{r*D} сразу следует 
\begin{equation*}\label{umMz}
v(z)+\ln\bigl|f(z)h(z)\bigr|\leq \bigl(v+m+\ln|h|\bigr)^{\bullet r_*}(z)
\overset{\eqref{umM},\eqref{r*D}}{\leq} M^{\bullet r_*}(z)\quad\text{при  всех $z\in \overline\strip_b$,} 
\end{equation*} 
откуда для целой функции $fh\not\equiv 0$ получаем 
\begin{equation*}
v(z)+\ln \bigl|(fh)(z)\bigr|\overset{\eqref{umM},\eqref{r*D}}{\leq} M^{\bullet r_*}(z)\overset{\eqref{brr}}{\leq} 
M^{\bullet r}(z) \quad\text{при  всех $z\in \overline\strip_b$,} 
\end{equation*} 
 что после переобозначения целой функции $fh$ как $h$ даёт \eqref{ubullet}. Таким образом, 
импликация  \ref{I_2}$\Rightarrow$\ref{I_3}, а значит,  и основная теорема доказаны.  
\end{proof}

\section{Дополнение распределения масс\\  до удовлетворяющего условиям Линделёфа}

В этом параграфе рассматриваем распределение масс $\mu$ конечной верхней плотности, 
которые удовлетворяют следующему условию: 
\begin{enumerate}
\item[{[$\upmu^{\rh}$]}] {\it существует неограниченная  последовательность  $(r_n)_{n\in \NN}$ в $\RR^+\setminus 0$, возрастающая   не быстрее геометрической прогрессии в смысле \eqref{rn}, с 
\begin{equation}\label{mu1}
\limsup\limits_{N\to  \infty}\sup\limits_{0\leq n<N}
\Bigl(\ell_{\mu}^{\lh}(r_n,r_N)-\ell_{\mu}^{\rh}(r_n,r_N)\Bigr)<+\infty. 
\end{equation}
}
\end{enumerate}
Для выполнения  [$\upmu^{\rh}$] с \eqref{mu1} достаточно любого из следующих пяти  условий:
\begin{enumerate}[{[$\upmu1$]}]
\item  $\ell_{\mu}^{\lh}(1,+\infty)<+\infty$;
\item $\supp \mu \subset \CC_{\overline \rh}$;
\item распределение масс  $\mu$ удовлетворяет  $\RR$-условию Линделёфа  \eqref{con:LpZR}--\eqref{con:LpZRl};
\item распределение масс  $\mu$ удовлетворяет  условию Линделёфа  \eqref{con:LpZ};
\item существует субгармоническая функция $M\not\equiv -\infty$ конечного типа с распределением масс Рисса $\frac{1}{2\pi}{\bigtriangleup}M=\mu$, 
\end{enumerate} 
где, очевидно,  [$\upmu1$] следует из [$\upmu2$],  [$\upmu3$] --- из [$\upmu4$], 
а по теореме  Вейерштрасса\,--\,Адамара\,--\,Ли\-н\-д\-е\-л\-ё\-фа\,--\,Брело условие 
[$\upmu5$] влечёт за собой [$\upmu4$]. 

Для подмножества $S\subset \CC$ зеркально симметричное ему относительно вещественной оси $\RR$ множество обозначаем 
через  $\bar{S}:=\bigl\{\bar z\bigm| z\in S\bigr\}$. 

Для распределения зарядов   $\nu$ на $\CC$  зеркально  симметричное 
ему относительно мнимой оси $i\RR$ распределение зарядов, обозначаем и определяем как 
\begin{equation}\label{mulhr-}
\nu^{\leftrightarrow}(S):=\nu(-\bar S) 
\quad\text{для всех $S\subset \CC$.}
\end{equation}

\begin{propos}\label{lemM} Для распределения масс $\mu$ конечной верхней плотности со свойством\/ {\rm [$\upmu^{\rh}$]}
существует распределение масс $\gamma$ конечной верхней плотности с носителем $\supp \gamma$ на отрицательной полуоси $-\RR^+$, для которого $\mu+\gamma$ удовлетворяет $\RR$-условию  Линделёфа \eqref{con:LpZR}--\eqref{con:LpZRl} и  
\begin{equation}\label{varDMmu}
0\leq \sup\limits_{1\leq r<R<+\infty}
\Bigl(\ell_{\mu+\gamma}(r,R)-\ell_{\mu}(r,R)\Bigr)<+\infty.
\end{equation}
\end{propos}
\begin{proof} Не умаляя общности, можно считать, что $0\notin \supp \mu$. 
Представим распределение масс $\mu$ в виде суммы двух его  сужений 
\begin{equation}\label{mupm}
\mu=\mu_{\overline\rh}+\mu_{\lh},  \quad  \mu_{\overline\rh}:=\mu{\lfloor}_{\CC_{\overline \rh}}, \quad 
\mu_{\lh}:=\mu{\lfloor}_{\CC_{\lh}},
\end{equation}
соответственно на замкнутую правую и открытую левую полуплоскости. Для распределения масс  $\mu_{\lh}$  рассмотрим  зеркально  симметричное ему относительно $i\RR$ распределение масс $\mu_{\lh}^{\leftrightarrow}$, определённое  
равенствами \eqref{mulhr-}. По построению 
\begin{equation}
0\notin \supp \mu_{\overline\rh}\bigcup \supp \mu_{\lh}^{\leftrightarrow}\subset \CC_{\overline\rh},
\label{mulhrs}
\end{equation}
и  для любых $0<r<R<+\infty$ по определениям \eqref{df:dDlm+}--\eqref{df:dDlm-} имеем 
\begin{multline*}
\ell_{\mu}^{\lh}(r,R)-\ell_{\mu}^{\rh}(r,R)\overset{\eqref{mupm}}{=}\ell_{\mu_{\lh}}^{\lh}(r,R)-\ell_{\mu_{\overline\rh}}^{\rh}(r,R)
\\
\overset{\eqref{mulhr-}}{=}\ell_{\mu_{\lh}^{\leftrightarrow}}^{\rh}(r,R)-\ell_{\mu_{\overline\rh}}^{\rh}(r,R)
\overset{\eqref{mulhrs}}{=}\ell_{\mu_{\lh}^{\leftrightarrow}}(r,R)-\ell_{\mu_{\overline\rh}}(r,R).
\end{multline*}
Отсюда  по условию \eqref{mu1} получаем  
\begin{equation}\label{mu1x}
\limsup\limits_{N\to  \infty}\sup\limits_{0\leq n<N}
\Bigl(\ell_{\mu_{\lh}^{\leftrightarrow}}(r_n,r_N)-\ell_{\mu_{\overline\rh}}(r_n,r_N)\Bigr)<+\infty, 
\end{equation}
а из импликации  \eqref{cprec}$\Rightarrow$\eqref{lJMmul+} предложения \ref{llJ} при 
\eqref{rn} и \eqref{mulhrs} имеем 
\begin{equation}\label{mu1x1}
\sup_{0< r<R<+\infty}
\Bigl(\ell_{\mu_{\lh}^{\leftrightarrow}}(r,R)-\ell_{\mu_{\overline\rh}}(r,R)\Bigr)<+\infty. 
\end{equation}
Рассмотрим  распределение зарядов 
\begin{equation}\label{etamu}
\eta:=\mu_{\lh}^{\leftrightarrow}-\mu_{\overline\rh},\quad 
 \supp \eta\overset{\eqref{mulhrs}}{\subset} \CC_{\overline  \rh},
\end{equation}
конечной верхней плотности, для которого  \eqref{mu1x1} означает, что 
\begin{equation}\label{slei}
\sup_{0<  r<R<+\infty} \ell_{\eta}^{\rh}(r,R)<+\infty. 
\end{equation}
Согласно конструкции предложения  \ref{addsec} можно построить  распределение масс $\alpha$ конечной верхней плотности с носителем   $\supp \alpha\subset \RR^+\setminus 0$, для которого
\begin{equation*}\label{|l|a}
\sup_{0\leq r<R<+\infty} \bigl|\ell_{\eta+\alpha}^{\rh}(r,R)\bigr|\overset{\eqref{|l|}}{\leq} 2
\sup_{0\leq  r<R<+\infty} \ell_{\eta}^{\rh}(r,R)\overset{\eqref{slei}}{<}+\infty,
\end{equation*}
что согласно \eqref{etamu} может быть записано как 
\begin{equation}\label{afrt}
\sup_{1\leq r<R<+\infty} \bigl|\ell_{\mu_{\lh}^{\leftrightarrow}+\alpha}^{\rh}(r,R)
-\ell_{\mu_{\overline\rh}}^{\rh}(r,R)\bigr|<+\infty.
\end{equation}
При этом, если положим $\gamma\overset{\eqref{mulhr-}}{:=}\alpha^{\leftrightarrow}$, то 
$\supp \gamma \subset -\RR^+$ ввиду $\supp \alpha\subset \RR^+$
и 
\begin{equation}\label{ellmg}
\begin{split}
\ell_{\mu+\gamma}^{\lh}(r,R)-\ell_{\mu+\gamma}^{\rh}(r,R) 
&=\ell_{\mu+\gamma}^{\lh}(r,R)-\ell_{\mu}^{\rh}(r,R) \\
=\ell_{\mu_{\lh}^{\leftrightarrow}+\gamma^{\leftrightarrow}}^{\rh}(r,R)-\ell_{\mu}^{\rh}(r,R) 
&=\ell_{\mu_{\lh}^{\leftrightarrow}+\alpha}^{\rh}(r,R)-\ell_{\mu}^{\rh}(r,R), 
\end{split}
\end{equation}
что вместе с   $\ell_{\mu+\gamma}^{\rh}(r,R)=\ell_{\mu}^{\rh}(r,R)\leq \ell_{\mu}(r,R)$ 
по определениям \eqref{df:dDlm+}--\eqref{df:dDlLm}  даёт 
\begin{multline*}
\sup_{1\leq r<R<+\infty} 
\bigl(\ell_{\mu+\gamma}(r,R)-\ell_{\mu}(r,R) \bigr)
\leq \sup_{1\leq r<R<+\infty}  \bigl(\ell_{\mu+\gamma}(r,R)-\ell_{\mu}^{\rh}(r,R) \bigr)
\\
\overset{\eqref{df:dDlLm}}{=}\sup_{1\leq r<R<+\infty} 
\bigl(\ell_{\mu+\gamma}^{\lh}(r,R)-\ell_{\mu}^{\rh}(r,R) \bigr)^+
\\
\overset{\eqref{ellmg}}{\leq}
\sup_{1\leq r<R<+\infty} \bigl|\ell_{\mu_{\lh}^{\leftrightarrow}+\alpha}^{\rh}(r,R)
-\ell_{\mu_{\overline\rh}}^{\rh}(r,R)\bigr|\overset{\eqref{afrt}}{<}+\infty.
\end{multline*}
 Отсюда и из  очевидного для $\mu+\gamma\geq \mu$  неравенства 
$\ell_{\mu+\gamma}(r,R)\geq\ell_{\mu}(r,R)$ получаем \eqref{varDMmu}. Кроме того, из 
\eqref{ellmg} следует 
\begin{multline*}
\sup_{1\leq r<R<+\infty}\Bigl|\ell_{\mu+\gamma}^{\lh}(r,R)-\ell_{\mu+\gamma}^{\rh}(r,R) \Bigr|\\
=\sup_{1\leq r<R<+\infty}\Bigl|\ell_{\mu_{\lh}^{\leftrightarrow}+\alpha}^{\rh}(r,R)-\ell_{\mu}^{\rh}(r,R)\Bigr|
 \overset{\eqref{afrt}}{<}+\infty,
\end{multline*}
откуда  распределение масс $\mu+\gamma$  удовлетворяет $\RR$-условию Линделёфа  \eqref{con:LpZRl}.
\end{proof}

\begin{propos}\label{propiR} Если  $\nu$ ---   распределения масс конечной верхней плотности, то 
найдётся  распределение масс $\beta$ конечной верхней плотности, для которого  $\supp \beta\subset i\RR$, а   $\nu+\beta$ удовлетворяет $i\RR$-условию Линделёфа  \eqref{con:LpZiR}.
\end{propos}
\begin{proof} Не умаляя общности, можем считать, что $0\notin \supp \nu$. 
Рассмотрим поворот на прямой угол по часовой стрелке  распределения масс $\nu$, обозначаемый и определяемый как
\begin{equation}\label{povor}
\nu^\circlearrowright(S):=\nu(-iS)\quad\text{для всех $S\subset \CC$.}
\end{equation}
Представим распределение масс $\nu^{\circlearrowright}$ в виде суммы двух его  сужений 
\begin{equation}\label{mupmc}
\nu^{\circlearrowright}=\nu^{\circlearrowright}_{\rh}+\nu^{\circlearrowright}_{\lh},  
\quad  \nu^{\circlearrowright}_{\rh}:=\nu^{\circlearrowright}{\lfloor}_{\CC_{\overline \rh}}, \quad 
\nu^{\circlearrowright}_{\lh}:=\mu{\lfloor}_{\CC_{\lh}},
\end{equation}
соответственно на замкнутую правую и открытую левую полуплоскости. 
Сопоставим левой составляющей  $\nu^{\circlearrowright}_{\lh}$ какое-нибудь распределение масс $\gamma_{\rh}$ конечной верхней плотности,  сосредоточенное на положительной полуоси, для которого 
 \begin{equation}\label{ellgk}
\ell_{\nu^{\circlearrowright}}^{\lh}(2^n,2^{n+1})
\overset{\eqref{df:dDlm-}}{=}\ell_{\nu^{\circlearrowright}_{\lh}}(2^n,2^{n+1})\leq \ell_{\gamma_{\rh}}(2^n,2^{n+1})\quad\text{при всех $n\in \NN_0$.}
\end{equation}
Сделать это можно, например, следующим поинтервально-поточечным способом. При каждом $n\in \NN_0$ и соответствующего интервала   $(2^n,2^{n+1}]$
расположим в  правом конце  $2^{n+1}$ этого интервала  массу, равную 
\begin{equation}\label{lgl1}
g_n:=2^{n+1}\ell_{\nu^{\circlearrowright}_{\lh}}^{\lh}(2^n,2^{n+1})\overset{\eqref{df:dDlm-}}{\leq} 2^{n+1}\frac{1}{2^n}\nu^{\rad}(2^{n+1})
\leq 2\nu^{\rad}(2^{n+1}),
\end{equation}  
а за распределение масс $\gamma_{\rh}$ примем  сумму всех этих масс $g_n$, сосредоточенных  в 
точках $2^{n+1}$. По построению и из неравенств в  \eqref{lgl1} следует, что 
$$
\gamma_{\rh}^{\rad}(2^{n+1})-\gamma_{\rh}^{\rad}(2^{n})=g_n\leq 2\nu^{\rad}(2^{n+1})\quad\text{для всех $n\in \NN$}, 
$$
откуда $\gamma_{\rh}$ --- распределение масс конечной верхней плотности с $\supp \gamma_{\rh} \subset \RR^+$. Кроме того, по построению  \eqref{lgl1} масс $g_n$ в точках $2^{n+1}$  имеем 
\begin{equation*}
\ell_{\gamma_{\rh}}^{\rh}(2^n,2^{n+1})\overset{\eqref{df:dDlm+}}{=}\frac{1}{2^{n+1}}g_n\overset{\eqref{lgl1}}{=}
\ell_{\nu^{\circlearrowright}_{\lh}}^{\lh}(2^n,2^{n+1}),
\end{equation*}
откуда, тем более, выполнено  \eqref{ellgk}. Рассмотрим распределение масс 
\begin{equation}\label{muhg}
\mu:=\nu^\circlearrowright+\gamma_{\rh}, 
\quad \mu{\lfloor}_{\CC_{ \lh}}=\nu^{\circlearrowright}_{\lh}, 
\quad \mu{\lfloor}_{\CC_{ \rh}}=\nu^{\circlearrowright}_{\lh}+\gamma_{\rh}, 
\end{equation} 
конечной верхней плотности, для которого согласно \eqref{ellgk} выполнено 
условие {[$\upmu^{\rh}$]} с соотношением \eqref{mu1} для последовательности чисел $r_n\underset{n\in \NN_0}{:=}2^n$.
По предложению \ref{lemM} для распределения масс $\mu$ 
существует распределение масс $\gamma_{\lh}$ конечной верхней плотности с $\supp \gamma\subset -\RR^+$, для которого 
$\mu+\gamma_{\lh}$ удовлетворяет $\RR$-условию  Линделёфа \eqref{con:LpZR}--\eqref{con:LpZRl}.
Если положим $\gamma:=\gamma_{\lh}+\gamma_{\rh}$, то по  построению $\gamma$ --- распределение масс конечной верхней плотности с 
$\supp \gamma\subset \RR$, а $\nu^\circlearrowright+\gamma\overset{\eqref{muhg}}{=}\mu+\gamma_{\lh}$
 удовлетворяет $\RR$-условию Линделёфа \eqref{con:LpZR}--\eqref{con:LpZRl}. Очевидно, существует распределение масс $\beta$ конечной верхней плотности  с $\supp \beta\subset i\RR$, поворот которого по часовой стрелке даёт $\gamma\overset{\eqref{povor}}{=}\beta^{\circlearrowright}$, откуда 
\begin{equation*}
(\nu+\beta)^\circlearrowright\overset{\eqref{povor}}{=}
\nu^\circlearrowright+\beta^\circlearrowright=
\nu^\circlearrowright+\gamma
\end{equation*}
--- это распределение масс конечной верхней плотности, которое удовлетворяет $\RR$-ус\-л\-о\-в\-ию Линделёфа   
\eqref{con:LpZR}--\eqref{con:LpZRl}. Соответственно по определению 
 \eqref{con:LpZiR} распределение масс $\nu+\beta$ конечной верхней плотности удовлетворяет  $i\RR$-условия Линделёфа  \eqref{con:LpZiR}, что завершает доказательство. 
\end{proof}

\begin{propos}\label{lemMiR} Для распределения масс $\mu$ конечной верхней плотности со свойством\/ {\rm [$\upmu^{\rh}$]} существует распределение масс $\varDelta\geq \mu$ конечной верхней плотности, удовлетворяющее условию  Линделёфа \eqref{con:LpZ}, для  которого 
\begin{gather}
\CC_{\rh}\bigcap \supp \varDelta=\CC_{\rh}\bigcap \supp \mu,
\label{vD+r}
\\
0\leq \sup\limits_{1\leq r<R<+\infty}
\Bigl(\ell_{\varDelta}(r,R)-\ell_{\mu}(r,R)\Bigr)<+\infty.
\label{vD+l}
\end{gather}
\end{propos}
\begin{proof} По предложению  \ref{lemM} найдётся распределение масс $\gamma$ конечной верхней плотности с носителем $\supp \gamma$ на отрицательной полуоси $-\RR^+$, для которого $\mu+\gamma$ удовлетворяет $\RR$-условию  Линделёфа \eqref{con:LpZR}--\eqref{con:LpZRl} и  
выполнено \eqref{varDMmu}. По предложению \ref{propiR} 
для  распределения масс  $\nu:=\mu+\gamma$ 
найдётся  распределение масс $\beta$ конечной верхней плотности, для которого   $\supp \beta\subset i\RR$, а   $\nu+\beta=\mu+\gamma+\beta$ удовлетворяет $i\RR$-условию Линделёфа  \eqref{con:LpZiR}.

Положим $\varDelta:=\mu+\gamma+\beta$. Тогда ввиду  $\supp \gamma \cup \supp \beta\subset (-\RR^+)\cup i\RR$ получаем \eqref{vD+r}, а из равенства $\ell_{\varDelta}=\ell_{\mu+\gamma}$ 
и соотношения \eqref{varDMmu} следует  \eqref{vD+l}, что  завершает доказательство предложения \ref{lemMiR}.  
\end{proof}
\begin{remark}\label{lemrl} Вместо условия [$\upmu^{\rh}$] можно было рассматривать зеркально симметричное относительно мнимой оси 
$i\RR$  условие 
\begin{enumerate}
\item[{[$\upmu^{\lh}$]}] {\it существует неограниченная  последовательность  $(r_n)_{n\in \NN}$ в $\RR^+\setminus 0$, возрастающая   не быстрее геометрической прогрессии в смысле \eqref{rn}, с 
\begin{equation}\label{mu1l}
\limsup\limits_{N\to  \infty}\sup\limits_{0\leq n<N}
\Bigl(\ell_{\mu}^{\rh}(r_n,r_N)-\ell_{\mu}^{\lh}(r_n,r_N)\Bigr)<+\infty. 
\end{equation}
}
\end{enumerate}
При выборе свойства\/ {\rm [$\upmu^{\lh}$]} вместо\/ {\rm [$\upmu^{\rh}$]} в предложении \ref{lemM} 
 распределение масс $\gamma$ конечной верхней плотности строится с носителем $\supp \gamma$ на положительной полуоси $\RR^+$, а в предложении \ref{lemMiR} следует поменять первое  заключение \eqref{vD+r}
на равенство  $\CC_{\lh}\bigcap \supp \varDelta=\CC_{\lh}\bigcap \supp \mu$ с сохранением \eqref{vD+l}. 

\end{remark}

\begin{remark} В недавней нашей с А.Е. Салимовой   статье приводится результат 
\cite[теорема 3.2]{SalKha21}  о дополнении распределения точек ${\mathrm Z}\subset \CC$ конечной верхней плотности до распределения точек конечной верхней плотности, удовлетворяющей условии Линделёфа. При этом в доказательстве этого результата  для 
 последовательности $(r_n)_{n\in \NN}$ из \eqref{rn} неявно использовано  условие
\begin{equation*}
\limsup\limits_{N\to  \infty}\sup\limits_{0\leq n<N}
\Bigl(\ell_{\mathrm Z}^{\lh}(r_n,r_N)-\ell_{\mathrm Z}^{\rh}(r_n,r_N)\Bigr)<+\infty 
\end{equation*} 
или зеркально симметричное ему условие такого же вида, в котором 
$\ell_{\mathrm Z}^{\lh}$ и $\ell_{\mathrm Z}^{\lh}$ меняются местами. Эти условия являются версиями соответственно условий {[$\upmu^{\rh}$]} или {[$\upmu^{\rh}$]} и какое-нибудь из них  следует включить в формулировку 
\cite[теорема 3.2]{SalKha21}, а также требовать выполнения одного  из них применительно к   
 распределению точек ${\mathrm W}\subset \CC$ вместо ${\mathrm Z}$ в \cite[следствие 3.1, теоремы 4.1--4.3]{SalKha21}.
\end{remark}

\section{Варианты основных результатов для пар распределений масс или точек}\label{mainresMR}

\subsection{Развития теоремы Мальявена\,--\,Рубела}\label{dMR}
В исходной для настоящей статьи теореме Мальявена\,--\,Рубела и теореме  \ref{theKh}, сформулированных в   п.~\ref{prr1_2},  в условиях рассматриваются распределения точек ${\mathrm Z}\subset \CC$ и ${\mathrm W}\subset \CC_{\rh}$ конечной верхней плотности.  Развивая такую постановку, в этом п.~\ref{dMR}   далее   вместо  распределения точек ${\mathrm W}\subset \CC_{\rh}$ рассматриваем распределение масс $\mu$.

\begin{theorem}\label{th_mu} Пусть $\mu$ --- распределение масс конечной верхней плотности со свойством\/  {\rm [$\upmu^{\rh}$],}
а распределение масс $\nu$ удовлетворяет условиям \eqref{nubstr-}. 
Тогда следующие пять  утверждений\/ {\rm \ref{Imu}--\ref{II_3mu}} эквивалентны:  
\begin{enumerate}[{\rm I.}]
\item\label{Imu}   Для  любого  $b\in [0,s)$ и любой субгармонической функции 
$M\not\equiv -\infty$ конечного типа с распределением масс Рисса $\frac{1}{2\pi}{\bigtriangleup}M\geq\mu$
существует субгармоническая функция $U\not\equiv -\infty$ конечного типа 
с  распределением масс Рисса 
$\frac{1}{2\pi}{\bigtriangleup}U\geq \nu$, для которой имеет место тождество \eqref{UeqM}.

\item\label{II_2mu} При  значениях $n$ и $N$, пробегающих   соответственно\/ $\NN_0$ и\/ $\NN$,  имеем 
\begin{equation}\label{lJ2mu}
\limsup\limits_{N\to  \infty}\sup\limits_{0\leq n<N}
\Bigl(\ell_{\nu}(2^n,2^N)-\ell_{\mu}(2^n,2^N)\Bigr)<+\infty.
\end{equation}

\item\label{I_2mu}  
Для  любой субгармонической функции $M\not\equiv -\infty$ конечного типа с распределением масс Рисса $\frac{1}{2\pi}{\bigtriangleup}M\geq\mu$ и любой пары субгармонических функций $v$ и $m$ с распределениями масс Рисса соответственно 
$\frac{1}{2\pi}{\bigtriangleup} v=\nu$ и  $\frac{1}{2\pi}{\bigtriangleup}m=\frac{1}{2\pi}{\bigtriangleup}M{\lfloor}_{\strip_{s}}$
при   каждом $b\in [0,s)$ найдётся целая функция $h\not\equiv 0$, с которой  сумма $v+m+\ln |h|$ ---  субгармоническая функция
  конечного типа и выполнены неравенства  \eqref{umM}.

\item\label{I_3mu} Для  любой субгармонической функции $M\not\equiv -\infty$ конечного типа с распределением масс Рисса $\frac{1}{2\pi}{\bigtriangleup}M\geq\mu$ и для  произвольной субгармонической функции   $v$ с распределением масс Рисса $\frac{1}{2\pi}{\bigtriangleup} v=\nu$ при любых  $b\in [0,s)$, $d\in (0,2]$ и функции $r\colon \CC\to (0,1]$ с ограничением \eqref{qr} найдутся целая функция $h\not\equiv 0$ и подмножество $E_b\subset \CC$, для которых   $v+\ln |h|$ ---  субгармоническая функция  конечного типа  и имеет место \eqref{ubullet}--\eqref{ubull}.

\item\label{II_3mu} Существуют  субгармоническая  функция $M\not\equiv -\infty$ конечного типа с 
 \begin{equation}\label{rsM}
\varDelta_M\overset{\eqref{Riesz}}{:=}\frac{1}{2\pi}{\bigtriangleup}M\geq\mu, \quad 
\varDelta_M{\lfloor}_{\CC_{\rh}}=\mu{\lfloor}_{\CC_{\rh}}, 
\end{equation}
и  функции $q_0$,  $q$, множество $E\subset \CC$ и субгармоническая функция  $U\not\equiv -\infty$ такие же, как 
в утверждении\/ {\rm \ref{II_3}} основной теоремы,  с \eqref{{UM0}M}--\eqref{{UM0}E}.
\end{enumerate}
\end{theorem}
\begin{proof}
Нетрудно понять, что импликации  \ref{Imu}$\Rightarrow$\ref{I_2mu}$\Rightarrow$\ref{I_3mu}  --- это в точности
импликации  \ref{I}$\Rightarrow$\ref{I_2}$\Rightarrow$\ref{I_3} основной теоремы.
Установим \ref{I_3mu}$\Rightarrow$\ref{II_3mu}$\Rightarrow$\ref{II_2mu}$\Rightarrow$\ref{Imu}.

Выведем из  утверждения \ref{I_3mu} утверждение  \ref{II_3mu}.
По  предложению \ref{lemMiR} существует распределение масс $\varDelta\geq \mu$ конечной верхней плотности, удовлетворяющее условию  Линделёфа \eqref{con:LpZ} со  свойствами  \eqref{vD+r}--\eqref{vD+l}. По теореме Вейерштрасса\,--\,Адамара\,--\,Ли\-н\-д\-е\-л\-ё\-фа\,--\,Брело найдётся субгармоническая функция $M\not\equiv -\infty$ с распределением масс Рисса $\frac{1}{2\pi}{\bigtriangleup}M=\varDelta\geq \mu$.  По утверждению  
\ref{I_3mu}  при  $b:=0$ существует субгармоническая  функция  конечного типа 
 $U:=v+\ln|h|\not\equiv -\infty$ из \eqref{ubullet}--\eqref{ubull} с распределением масс Рисса $\frac{1}{2\pi}{\bigtriangleup}U\geq \nu$, удовлетворяющая неравенству $U(iy)\leq M(iy)$  для всех $iy\in (\CC\setminus E_0)\cap i\RR$, где при $d:=1$ и  выборе $r\equiv 1$ в \eqref{qr} имеем ${\mathfrak m}_1^1(E_0) <+\infty$.  Таким образом,  из утверждения   \ref{I_3mu}   имеем $U(iy)+U(-iy)\leq M(iy)+M(-iy)$ для всех $y\in \RR^+\setminus E$, где $E:=\bigl(iE_0)\cup (-iE_0)\bigr)\cap \RR^+$ конечной лебеговой меры ${\mathfrak m}_1(E) <+\infty$. Для такого подмножества $E\subset \RR^+$ имеет место \eqref{qEr1}, 
откуда  при выборе $q_0=q=0$ получаем как конечность интеграла из \eqref{{UM0}E}, так и неравенства  \eqref{{UM0}M} при всех $y\in \RR^+\setminus E$. Это доказывает истинность 
импликации \ref{I_3mu}$\Rightarrow$\ref{II_3mu}.

Если выполнено \ref{II_3mu}, то из импликации \ref{II_3}$\Rightarrow$\ref{II_2} основной теоремы следует 
соотношение \eqref{lJ2}. Это  в сочетании с  соотношением \eqref{{Jll}m} леммы \ref{lemJl}, записанным в обозначении  \eqref{JiR},  даёт неравенства 
\begin{multline}\label{lsmM}
\limsup\limits_{N\to  \infty}\sup\limits_{0\leq n<N}\Bigl(\ell_{\nu}(2^n,2^N)-\ell_{\varDelta_M}(2^n,2^N)\Bigr)
\\
\leq \limsup\limits_{N\to  \infty}\sup\limits_{0\leq n<N}\Bigl(\ell_{\nu}(2^n,2^N)-J_{i\RR}(2^n,2^N;M)\Bigr)\\
+\limsup\limits_{N\to  \infty}\sup_{0\leq n<N} \bigl|J_{i\RR}(2^n,2^N;M)-\ell_{\varDelta_M}(2^n,2^N)\bigr|
\overset{\eqref{lJ2},\eqref{{Jll}m}}{<}+\infty.
\end{multline}
 Но для распределения масс $\varDelta_M$, удовлетворяющего условию Линделёфа и, тем более, $\RR$-условию  Линделёфа в форме \eqref{con:LpZRl}, выполнено
$$
\sup_{1\leq r<R<+\infty}\bigl| \ell_{\varDelta_M}(r,R)-\ell_{\varDelta_M}^{\rh}(r,R)\bigr|<+\infty.
$$
Отсюда   согласно \eqref{lsmM} получаем 
\begin{equation}\label{nNmuM}
\limsup\limits_{N\to  \infty}\sup\limits_{0\leq n<N}\Bigl(\ell_{\nu}(2^n,2^N)-\ell_{\varDelta_M}^{\rh}(2^n,2^N)\Bigr)
<+\infty.
\end{equation}
Ввиду  $\varDelta_M{\lfloor}_{\CC_{\rh}}\overset{\eqref{rsM}}{=}\mu{\lfloor}_{\CC_{\rh}}$, 
 имеем равенства 
$$
\ell_{\varDelta_M}^{\rh}=\ell_{{\varDelta_M}{\lfloor}_{\CC_{\rh}}}=\ell_{\mu{\lfloor}_{\CC_{\rh}}}=\ell_{\mu}^{\rh}\leq \ell_{\mu},
$$ 
что  позволяет из  соотношения  \eqref{nNmuM} получить \eqref{lJ2mu} и утверждение \ref{II_2mu}. 

Из соотношения \eqref{lJ2mu}  утверждения \ref{II_2mu} следует, что для любой 
субгармонической функции $M\not\equiv -\infty$ конечного типа с распределением масс Рисса $\varDelta_M=\frac{1}{2\pi}{\bigtriangleup}M\geq\mu$ тем более выполнено 
соотношение 
\begin{equation}\label{limbm}
\limsup\limits_{N\to  \infty}\sup\limits_{0\leq n<N}
\Bigl(\ell_{\nu}(2^n,2^N)-\ell_{\varDelta_M}(2^n,2^N)\Bigr)<+\infty.
\end{equation}
Отсюда по соотношению \eqref{{Jll}m} леммы \ref{lemJl}, записанным в обозначении  \eqref{JiR}, 
\begin{multline*}
\limsup\limits_{N\to  \infty}\sup\limits_{0\leq n<N}\Bigl(\ell_{\nu}(2^n,2^N)-J_{i\RR}(2^n,2^N;M)\Bigr)
\\
\leq \limsup\limits_{N\to  \infty}\sup\limits_{0\leq n<N}
\Bigl(\ell_{\nu}(2^n,2^N)-\ell_{\varDelta_M}(2^n,2^N)\Bigr)
\\
+\limsup\limits_{N\to  \infty}\sup_{0\leq n<N} \bigl|\ell_{\varDelta_M}(2^n,2^N)-J_{i\RR}(2^n,2^N;M)\bigr|
\overset{\eqref{limbm},\eqref{{Jll}m}}{<}+\infty,
\end{multline*}
где крайние части и есть соотношение \eqref{lJ2} из  утверждения \ref{II_2}
основной теоремы. Из импликации \ref{II_2}$\Rightarrow$\ref{I} основной теоремы 
для  любого  $b\in [0,s)$ существует субгармоническая функция $U\not\equiv -\infty$ конечного типа 
с  распределением масс Рисса $\frac{1}{2\pi}{\bigtriangleup}U\geq \nu$, для которой 
выполнено тождество \eqref{UeqM}. Таким образом, импликация 
\ref{II_2mu}$\Rightarrow$\ref{Imu} истинна и теорема доказана. 
 \end{proof}

\begin{theorem}\label{th2_1Z} Пусть $\mu$ --- распределение масс конечной верхней плотности со свойством\/  {\rm [$\upmu^{\rh}$],}  а   распределение точек   ${\mathrm Z}$ такое же, как в теореме\/ {\rm \ref{th2_1}}.
Тогда следующие четыре  утверждения\/ {\rm \ref{IeZ}--\ref{IIZ}} эквивалентны:
\begin{enumerate}[{\rm I.}]
\item \label{IeZ}  
При любых\/  $0\leq b<s\in \RR^+$ для  произвольной   субгармонической функции $M\not\equiv -\infty$ конечного типа с  $\frac{1}{2\pi}{\bigtriangleup}M\geq\mu$   и любой  субгармонической функции $m$ с распределением масс Рисса 
$\frac{1}{2\pi}{\bigtriangleup}m=\frac{1}{2\pi}{\bigtriangleup}M{\lfloor}_{\strip_s}$
найдётся  целая функция $f\not\equiv 0$ с  $f({\mathrm Z})=0$,  для которой  субгармоническая функция 
$\ln |f|+m$ конечного типа и выполняются неравенства \eqref{umMe}.

\item\label{II2Z} При  значениях $n$ и $N$, пробегающих   соответственно\/ $\NN_0$ и\/ $\NN$,  имеем 
\begin{equation}\label{ellZMmu}
\limsup\limits_{N\to  \infty}\sup\limits_{0\leq n< N}
\Bigl(\ell_{\mathrm Z}(2^n,2^N)-\ell_{\mu}(2^n,2^N)\Bigr)<+\infty.
\end{equation}

\item \label{IIeZ}  
 При любых  $b\in \RR^+$, $d\in (0,2]$ и функции $r\colon \CC\to (0,1]$ из  \eqref{qr}
для  любой  субгармонической функции $M\not\equiv -\infty$ конечного типа с  $\frac{1}{2\pi}{\bigtriangleup}M\geq\mu$ 
найдутся целая функция $f\not\equiv 0$ экспоненциального типа   с  $f({\mathrm Z})=0$ и   $E_b\subset \CC$, для которых 
 $\ln\bigl|f(z)\bigr|\leq M^{\bullet r}(z)$ {при всех $z\in \overline\strip_b$, а также}  $\ln\bigl|f(z)\bigr|\leq M(z)$ при всех $z\in \overline\strip_b\setminus E_b$, где для $E_b$ выполнено \eqref{ubull}.

\item\label{IIZ} 
Для распределения масс $\nu:={\mathrm Z}$ 
выполнено утверждение\/ {\rm \ref{II_3mu}}    теоремы\/ {\rm \ref{th_mu}}. 
\end{enumerate}
 \end{theorem}
Вывод  теоремы \ref{th2_1Z} из теоремы \ref{th_mu} опускаем, поскольку он во многом почти дословно повторяет 
вывод теоремы  \ref{th2_1} из основной теоремы в п.~\ref{Ss2_2}. То же самое относится и к следующему следствию 
\ref{corefZ}, которое можно вывести из  теоремы \ref{th2_1Z} по схеме, аналогичной последовательному выводу 
следствий \ref{coref} и \ref{th2_2} из теоремы   \ref{th2_1}, также  изложенному в п.~\ref{Ss2_2}.
 \begin{corollary}\label{corefZ} 
Если в условиях теоремы\/ {\rm \ref{th2_1Z}} распределение масс $\mu$  целочисленное, т.е. является распределением точек ${\mathrm W}=\mu$, то  свойство\/  {\rm [$\upmu^{\rh}$]} с \eqref{mu1} при $r_n\underset{n\in \NN_0}{:=}2^n$ 
эквивалентно   свойству \eqref{ellZMgMR}, а каждое из  трёх  утверждений {\rm\ref{Ieg0MR}}--{\rm \ref{IIg0MR}}    теоремы\/  {\rm \ref{th1_5MR}} эквивалентно утверждению\/ {\rm \ref{IIZ}} теоремы\/ 
{\rm \ref{th2_1Z}}. 
\end{corollary}
\begin{remark} Если в условиях теорем \ref{th_mu}  и \ref{th2_1Z} свойство 
[$\upmu^{\rh}$] заменить на  зеркально симметричное относительно мнимой оси свойство  
[$\upmu^{\lh}$], то, с учётом замечания \ref{lemrl}, единственное изменение, которое необходимо внести в формулировки теорем 
\ref{th_mu}  и \ref{th2_1Z} --- это заменить в \eqref{rsM} второе <<правостороннее>> равенство 
$\varDelta_M{\lfloor}_{\CC_{\rh}}=\mu{\lfloor}_{\CC_{\rh}}$ на <<левостороннее>>
$\varDelta_M{\lfloor}_{\CC_{\lh}}=\mu{\lfloor}_{\CC_{\lh}}$.
Соответственно и в теореме \ref{th1_5MR} условие \eqref{ellZMgMR}
на ${\mathrm W}$ можно заменить на зеркально симметричное относительно мнимой оси $i\RR$ условие 
\begin{equation*}
\limsup\limits_{N\to  \infty}\sup\limits_{0\leq n<N}
\Bigl(\ell_{\mathrm W}(2^n,2^N)-\ell_{{\mathrm W}{\lfloor}_{\CC_{ \lh}}}(2^n,2^N)\Bigr)<+\infty,
\end{equation*}
с сужением  ${\mathrm W}{\lfloor}_{\CC_{ \lh}}$  ${\mathrm W}$ на $\CC_{ \lh}$, которое, очевидно, выполнено,  при\/ ${\mathrm W}\subset \CC_{\overline\lh}$. При этом единственное дополнительное изменение в теореме \ref{th1_5MR}, которое необходимо, --- это замена    правостороннего равенства  $\Zero_g {\lfloor}_{\CC_{\rh}}={\mathrm W}{\lfloor}_{\CC_{\rh}}$ в утверждении  \ref{IIg0MR}  на левостороннее  $\Zero_g {\lfloor}_{\CC_{\lh}}={\mathrm W}{\lfloor}_{\CC_{\lh}}$.
\end{remark}

\subsection{Заключительные комментарии}\label{dBM}
Не касаясь применений результатов статьи, которые указаны в конце введения,  отметим некоторые возможные  пути дальнейшего естественного развития основных  результатов, не претендуя, впрочем, на полноту описания перспектив такого развития. 

Вместо внешней плотности Редхеффера от распределений точек в формулировках результатов, где она  встречается,  можно использовать многочисленные известные и равные ей внешние плотности Бёрлинга\,--\,Мальявена, Кахана и др. \cite{BM67}, \cite{Kah62}, \cite{Red77}, \cite{Koosis88}, \cite{Koosis92}.  Взаимосвязи между этими плотностями, дополненные и новыми,   наиболее детально исследованы   И.\,Ф.~Красичковым-Терновским  \cite{Kra89} c реализации и для распределений масс.  Кроме того, можно  использовать и иные, гораздо более тонкие, плотности и характеристики  распределений точек и масс, порождаемые специальными классами тестовых субгармонических функций    из работ \cite{KhaTalKha14} и \cite{BaiTalKha16}, являющихся аналогами классов основных функций теории обобщённых функций. Именно эти тестовые плотности и характеристики  представляются наиболее естественным развитием логарифмических функций интервалов и субмер для частей распределений точек или масс вблизи или на мнимой оси. Такой подход через тестовые функции уже неплохо зарекомендовал себя при доказательстве теорем Бёрлинга\,--\,Мальявена  \cite{Kha94}, \cite[гл.~III]{Koosis96}, теоремы \ref{thK1-1} и её более общих версий в \cite{Kha01l},  при решении  многих других задач в \cite{Kha99}, \cite{Kha09}, 
\cite{KhaKhaChe08I}, а из  недавних --- в \cite{KhaRoz18}, \ \cite{KhaKha19A}, \cite{KhaKha19},
\cite{MenKha20}, \cite{KhaKha21} и т.д. Метод  тестовых субгармонических функций, двойственный в теоретико-потенциальном смысле к методу огибающей из 
\cite{KhaRozKha19}, \cite{Kha21b},  позволит учитывать и значительно  более жёсткие, чем в теоремах \ref{thBM1},
\ref{th1_5}, \ref{th1_5MR} и в основных результатах из \S~\ref{mainres} и \S~\ref{mainresMR}, 
 ограничения на  целые функции  экспоненциального типа или субгармонические функции конечного типа,  учитывающие точные значения величины их типа или даже ограничения на индикаторы роста по направлениям.
 
Наконец, постановки задач \eqref{fgiRM}, \eqref{UM=}, \eqref{UMbbul}, \eqref{{fgiRMu}l}  
во введении   возможны не только для субгармонических функций $M$ конечного типа, но и для достаточно произвольных расширенных числовых функций $M$, определённых лишь на мнимой оси или на вертикальной полосе $\overline\strip_b$ из \eqref{{strip}c}. Такие версии постановок задач в духе и строго в рамках теоремы Бёрлинга\,--\,Мальявена о мультипликаторе затронуты в 
\cite{BM62},  \cite{Mal79}, \cite{HJ94}, \cite{MasNazHav05}  и наиболее детально исследованы в монографиях П. Кусиса   \cite{Koosis88}, \cite{Koosis92}, а также в ряде его последующих статей. Один из возможных подходов при таком развитии --- аппроксимация расширенных числовых функций $M$ на  мнимой оси или полосе субгармоническим функциями конечного типа или их разностями в поточечных,  интегральных или  равномерных  метриках вне малых исключительных множеств. Это уже иная задача, представляющая и самостоятельный интерес.

\end{fulltext}

\end{document}